\def\sideremark#1{\ifvmode\leavevmode\fi\vadjust{\vbox to0pt{\vss% the remark
 \hbox to 0pt{\hskip\hsize\hskip1em%                          will appear only
 \vbox{\hsize2.1cm\tiny\raggedright\pretolerance10000%          on the side
  \noindent #1\hfill}\hss}\vbox to15pt{\vfil}\vss}}}%
\numberwithin{equation}{section}
\def\polhk#1{\setbox0=\hbox{#1}{\ooalign{\hidewidth\lower1.5ex\hbox{`}\hidewidth\crcr\unhbox0}}}
\def\Xint#1{\mathchoice
{\XXint\displaystyle\textstyle{#1}}%
{\XXint\textstyle\scriptstyle{#1}}%
{\XXint\scriptstyle\scriptscriptstyle{#1}}%
{\XXint\scriptscriptstyle\scriptscriptstyle{#1}}%
\!\int}
\def\XXint#1#2#3{{\setbox0=\hbox{$#1{#2#3}{\int}$ }
\vcenter{\hbox{$#2#3$ }}\kern-.6\wd0}}
\def\dashint{\Xint-}
\newcommand{\R}{\mathbb{R}}
\newcommand{\N}{\mathbb{N}}
\theoremstyle{plain}
\newtheorem{theorem}{Theorem}[section]
\newtheorem{definition}{Definition}[section]
\newtheorem{lemma}{Lemma}[section]
\newtheorem{corollary}{Corollary}[section]
\newtheorem{proposition}{Proposition}[section]
\newtheorem{remark}{Remark}[section]
\newtheorem{Assumption}{A}
\title{Global fractional Sobolev regularity for fully nonlinear elliptic equations}
\author{Claudemir Alcantara and Makson Santos}
\begin{document}

\subjclass[2020]{35B65, 35J60, 35J15, 35J25} 

\keywords{}
  
\begin{abstract} 
We investigate fractional regularity estimates up to the boundary for solutions to fully nonlinear elliptic equations with measurable ingredients. Specifically, under the assumption of uniform ellipticity of the operator, we demonstrate that viscosity solutions to a second-order operator satisfy a fractional Laplacian equation. This result implies that the solutions are globally of class $W^{\gamma, p}$, for $\gamma \in (1,2)$ with appropriate estimates. Consequently, these solutions exhibit differentiability of order strictly greater than one, without requiring any additional assumptions regarding the operator, such as convexity or concavity.    
\end{abstract}   
 
\date{\today}

\maketitle
 
\tableofcontents

%-------------------------------------------------------------------------------------------------------
% 				INTRODUCTION
%--------------------------------------------------------------------------------------------------------------

\section{Introduction}\label{sct intro}

In this paper, we study the regularity theory of viscosity solutions to fully nonlinear equations of the form
\begin{equation}\label{1}
\begin{cases}
F(D^2u,Du, u,x) = f(x) & \quad \text{in} \quad \Omega, \\
u = \varphi & \quad \text{on} \quad \partial\Omega,
\end{cases}
\end{equation}
where $F: \mathcal{S}(d) \times \mathbb{R}^d \times \mathbb{R} \times B^+_1 \rightarrow \mathbb{R}$ is a $(\lambda, \Lambda)$-elliptic operator, $f \in L^p(\Omega)$, and $\varphi \in W^{2,p}(\Omega)$. Here, $\mathcal{S}(d)$ denotes the set of symmetric matrices. In particular, we establish boundary estimates in the fractional Sobolev space $W^{\gamma,p}(B^+_{1/2})$ for some universal constant $\gamma \in (1,2)$. These, combined with interior regularity results, lead to global estimates.

Over the past few decades, significant progress has been made in the regularity theory of fully nonlinear equations, which plays a crucial role in mathematical analysis. The foundational work began with the result in \cite{Krylov-Safonov1980}, where the author proved that solutions to
\begin{equation}\label{eq_homogeq}
F(D^2u) = 0 \quad \text{in} \quad B_1
\end{equation}
are locally of class $C^\alpha$ for some universal, but unknown, $\alpha \in (0,1)$. Later, gradient estimates were derived in \cite{Trudinger88}, where a linearization approach showed that the gradient of the solutions satisfies a linear equation, leading to $C^{\alpha_0}$-estimates for the gradient. Consequently, this implies that solutions are locally of class $C^{1,\alpha_0}$ for some unknown $\alpha_0 \in (0,1)$. Throughout this manuscript, we use $\alpha_0$ to represent the Hölder exponent corresponding to the $C^{1, \alpha_0}$-regularity of solutions to \eqref{eq_homogeq}.

Once gradient estimates were established, the next question was whether classical solutions could be obtained. In \cite{Evans82, Krylov83} this was answered, where the authors independently showed that, under the additional assumption of convexity on the operator $F$, solutions belong to the class $C^{2,\alpha}_{\text{loc}}$, confirming the existence of classical solutions. This result became known as the Evans-Krylov theory.

In \cite{Caffarelli1989}, the author studied fully nonlinear elliptic equations with non-homogeneous and variable coefficients of the form
\begin{equation}\label{eq_main3}
F(D^2u, x) = f(x) \quad \text{in} \, B_1,
\end{equation}
where $f \in L^p(B_1) \cap C(B_1)$ for $p > d$. The author established $C^{1, \alpha}_{\text{loc}}$, $C^{2, \alpha}_{\text{loc}}$, and $W^{2, p}_{\text{loc}}$-estimates, using approximation methods to link the regularity of solutions to \eqref{eq_main3} with the regularity of solutions to \eqref{eq_homogeq}. While the $C^{1, \alpha}$-estimates only require uniform ellipticity and a small oscillation condition on $F$ (see Definition \ref{def_oscF} and Assumption \ref{A-oscF}), the key assumption for proving $W^{2, p}_{\text{loc}}$ regularity is the $C^{1,1}$-estimates for solutions to the homogeneous equation \eqref{eq_homogeq}. Specifically, the author assumed that solutions to \eqref{eq_homogeq} are in $C^{1,1}_{\text{loc}}(B_1)$, with the estimate
\[
\|u\|_{C^{1,1}(B_{1/2})} \leq C\|u\|_{L^\infty(B_1)},
\]
where $C$ is a universal constant. This condition, for example, holds when the operator is convex or concave, as ensured by the Evans-Krylov theory. In terms of H\"{o}lder regularity, $C^{1, \alpha}_{\text{loc}}$-estimates are the best one can expect under the assumption of uniform ellipticity alone, at least for $d \geq 5$. We refer to the counterexamples in \cite{Nad-Vla13}, where the authors show that for any constants $0 < \lambda \leq \Lambda$ and $\beta \in (0,1)$, it is possible to construct an operator $F$ whose solutions do not belong to $C^{1, \beta}$.

Further advancements were made in \cite{Escauriaza93}, where the author refined the $W^{2,p}$-estimates from \cite{Caffarelli1989} by allowing $p$ to be slightly less than the dimension $d$. Namely, the condition $p > d$ was replaced with $p > d - \varepsilon_0$, where $\varepsilon_0 \in (0, d/2)$ is known as the {\it Escauriaza exponent}, and it depends only on the dimension $d$ and the ellipticity constants $\lambda$ and $\Lambda$. For $p \in (d - \varepsilon_0, d)$, and under the assumption of uniform ellipticity for the operator $F$, the author in \cite{Swiech97} proved that solutions to \eqref{1} belong to the space $W^{1, q}_{\text{loc}}$, where
\[
q < p^* := \dfrac{dp}{d - p}, \quad \text{with} \quad d^* = \infty.
\]
However, this regularity is not optimal, at least regarding differentiability. In \cite{Pimentel-Santos-Teixeira2022}, the authors showed that, under the assumption of uniform ellipticity alone, solutions to 
\begin{equation}\label{eq_main4}
F(D^2u) = f(x) \quad \text{in} \, B_1,
\end{equation}
where $f \in L^p(B_1)$ and $p \in (d - \varepsilon_0, +\infty)$, are locally of class $W^{1+\varepsilon, p}$, for any $\varepsilon \in (0, \alpha_0)$. Unlike the paraboloid-based approach in \cite{Caffarelli1989}, \cite{Pimentel-Santos-Teixeira2022} introduced the use of $C^{1, \alpha}$-cones (see Definition \ref{def_cones} below), which allowed them to apply the $C^{1, \alpha_0}_{\text{loc}}$-estimates already known for solutions to \eqref{eq_homogeq}, without needing additional assumptions like convexity. By employing this new approach, the authors demonstrated that viscosity solutions to \eqref{eq_main4} are also weak solutions to
\begin{equation}
\begin{cases}\label{eq_fraclapintro}
    (-\Delta)^{\frac{\sigma}{2}} u = g \quad &\text{in} \, B_1,\\
    \tilde{u} = 0 \quad &\text{in} \, \R^d \setminus B_1,
\end{cases}
\end{equation}
where $\sigma = 1 + \varepsilon$, thus achieving the desired regularity.

Boundary regularity has also been an active area of research. In \cite{Krylov83}, the author extended the Evans-Krylov theory to global estimates for solutions to \eqref{1} with $f \equiv 0$. For equations with a right-hand side in $C^\alpha$ and Dirichlet boundary conditions, the author in \cite{Silvestre-Sirakov2014} established $C^{1,\alpha}$ and $C^{2,\alpha}$ boundary regularity, assuming standard structural conditions on the operator $F(D^2u, Du, x)$ and a form of H\"{o}lder regularity in the $x$-variable. Additionally, the author proved that solutions are $C^{2,\alpha}$ in a neighborhood of the boundary, without needing convexity assumptions, unlike the interior estimates as discussed above. This does not contradict the counterexamples in \cite{Nad-Vla13} since the singularities occur at the center of the domain. Under similar assumptions, the authors in \cite{Milakes-Silvestre2006} proved $C^{1,\alpha}$ and $C^{2,\alpha}$ regularity for a problem similar to \eqref{eq_main4}, with Neumann boundary conditions and $f \in L^p$, for $p > d$. Again, the $C^{2,\alpha}$-estimates depended on the convexity/concavity of the operator. In Sobolev spaces, boundary regularity results were also obtained in \cite{nikki2009}, where the author proved $W^{1,p}$ and $W^{2,p}$ boundary estimates for solutions to \eqref{1} with $f \in L^p$, $p \in (d - \varepsilon_0, +\infty)$, Dirichlet boundary conditions, and a $C^{1,1}$ domain. The strategy followed closely that of \cite{Caffarelli1989}. Once more the $W^{2, p}$-estimates rely on the convexity/concavity of the operator (or $C^{1, 1}$-estimates for the corresponding homogeneous problem).

The purpose of the present paper is to establish fractional Sobolev regularity up to the boundary for viscosity solutions to \eqref{1}, extending the results of \cite{Pimentel-Santos-Teixeira2022} to global estimates for gradient-dependent operators with variable coefficients. We adopt the strategy outlined in \cite{Pimentel-Santos-Teixeira2022}, simplifying it in some steps, by using $C^{1,\alpha}$-cones to touch the solution from above and below in order to derive certain estimates. Using this approach, we first prove that viscosity solutions to \eqref{eq_main3} with zero Dirichlet boundary conditions on a flat boundary are also weak solutions to a fractional Laplacian equation of order $\frac{1+\varepsilon}{2}$, with $\varepsilon \in (0,\alpha_0)$. As highlighted in \cite{Pimentel-Santos-Teixeira2022}, this suggests no better than fractional diffusion behaviour, even though $F$ is a second-order operator. In particular, we show that solutions are of class $W^{\gamma,p}$ up to the boundary, for all $\gamma < 1+\varepsilon$, with corresponding estimates. Unlike \cite{Silvestre-Sirakov2014} and \cite{nikki2009}, we assume only that the operator $F$ is uniformly elliptic (see Theorem \ref{theo1} and the discussion that follows) while deriving differentiability estimates of order strictly larger than one. Furthermore, we handle more general operators as in \eqref{1} with non-zero Dirichlet boundary conditions, where the boundary data is required to be of class $W^{2,p}$ due to the order of the operator. This is discussed in Theorem \ref{prop geral case II}. 

The remainder of the paper is structured as follows: In Section 2, we present definitions and state the main results. Section 3 is devoted to the proof of Theorem \ref{theo1} for equations with a flat boundary and zero boundary conditions. Section 4 extends Theorem \ref{theo1} to more general operators and $C^2$ domains. Finally, in the Appendix we establish local $W^{1+\varepsilon,p}$ for solutions to \eqref{1}, extending previous results in the literature for more general operators.

%------------------------------------------------------------------------------------------------
% 			PRELIMINARIES
%------------------------------------------------------------------------------------------------

\section{Preliminaries}

\subsection{Definitions and auxiliary results}

This section gathers some definitions and auxiliary results used throughout this manuscript. We denote by $B_r(x_0)$ the ball of radius $r$ centered at $x_0$. In what follows, we give the definition of viscosity solutions.  

\begin{definition}[Viscosity solution]
A continuous function $u\in C(\Omega)$ is said to be a $L^p$-viscosity subsolution to \eqref{1} in $\Omega$, if for every $\varphi\in W^{2,p}_{\text{loc}}(\Omega)$ and $x_0\in \Omega$ such that $\varphi-u$ has a local maximum at $x_0$, there holds
\[
F(D^2u,Du,u,x) \leq f(x) \quad \text{a.e in}\quad \Omega',
\]
where $\Omega' \subset \Omega$ is a neighbourhood of $x_0$. Respectively, $u\in C(\Omega)$ is a $L^p$-viscosity supersolution to \eqref{1} in $\Omega$, for every $\varphi\in W^{2,p}_{\text{loc}}(\Omega)$ and $x_0\in \Omega$ such that $\varphi-u$ has a local minimum at $x_0$, there holds
\[
F(D^2u,Du,u,x) \geq f(x) \quad \text{a.e in}\quad \Omega',
\]
where $\Omega' \subset \Omega$ is a neighbourhood of $x_0$. We say that $u\in C(\Omega)$ is a $L^p$-viscosity solution to the equation \eqref{1} if it is simultaneously a viscosity subsolution and supersolution.
\end{definition}

For simplicity, we only refer to the solutions as viscosity solutions. As previously mentioned, we adopt the strategy outlined in \cite{Pimentel-Santos-Teixeira2022}, which utilizes cones to touch the solutions from both above and below to derive certain estimates. This contrasts with the approach in \cite{Caffarelli1989}, which employs paraboloids and requires the assumption of $C^{1,1}$-estimates. We will now define the $C^{1, \alpha}$-cones.

\begin{definition}[$C^{1, \alpha}$-cones]\label{def_cones}
We say that a function $\psi$ is a convex $C^{1,\alpha}$-cone of opening $M$ and vertex $x_0$ if 
\[
\psi(x)=\ell(x)+\frac{M}{2}|x-x_0|^{1+\alpha},
\]
where $M$ is a positive constant and $\ell$ is an affine function. Similarly, we say that a function $\psi$ is a concave $C^{1,\alpha}$-cone of opening $M$ and vertex $x_0$ if 
\[
\psi(x) = \ell(x)-\frac{M}{2}|x-x_0|^{1+\alpha},
\]
where $M$ is a positive constant and $\ell$ is an affine function.
\end{definition}

The set of points that can be touched by a $C^{1,\alpha}$-cone of a given opening $M$ is crucial to our analysis. More precisely, our strategy involves establishing a decay rate for these sets. 

\begin{definition}
Let $\Omega\subset\mathbb{R}^d$ be a bounded open set. For $M>0$ and $0<\tau_0<diam(\Omega')/5$, where $\Omega'\subset \Omega$ is an open subset, we define 
\[
\underline{G}_M(u,\Omega')=\underline{G}_M(\Omega')
\]
as the set of all points $x_0\in \Omega'$ for which there exists a convex $C^{1,\alpha}$-cone $\psi$ of opening $M$ such that
\begin{enumerate}
    \item[1.] $u(x_0) = \psi(x_0)$
    \item[2.] $u(x) < \psi(x)$ for all $x \in B_{\tau_0}(x_0) \setminus \{x_0\}$.
\end{enumerate}
Similarly, we define
\[
\overline{G}_M(u,\Omega')=\overline{G}_M(\Omega')
\]
as the set of all points $x_0\in \Omega'$ for which there exists a concave $C^{1,\alpha}$-cone $\psi$ of opening $M$ such that 
\begin{enumerate}
    \item[1.] $u(x_0)=\psi(x_0)$
    \item[2.] $u(x)>\psi(x)$ for all $x \in B_{\tau_0}(x_0)\setminus\{x_0\}$.
\end{enumerate}
Finally,
\[
G_M(\Omega'):=\overline{G}_M(\Omega')\cap\underline{G}_M(\Omega').
\]
\end{definition}

The following lemma gives some properties related to the set $G_M$.

\begin{lemma}\label{GM properties}
Given a bounded open set $\Omega$, we have that the sets $G_M$ enjoy the following properties:
\begin{enumerate}
    \item[i)] Let $M>N>0$, then $G_N(u,\Omega)\subset G_M(u,\Omega)$.\\
    \item[ii)] Let $K\neq 0$ be a constant, then $G_M(Ku,\Omega)=G_{M/K}(u,\Omega)$.\\
\end{enumerate}
\end{lemma}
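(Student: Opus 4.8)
The plan is to verify both properties directly from the definition of $G_M$, which reduces in each case to the corresponding statements for $\underline{G}_M$ and $\overline{G}_M$, since $G_M = \overline{G}_M \cap \underline{G}_M$.

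For part (i), I would take a point $x_0 \in G_N(u,\Omega)$ and produce the required $C^{1,\alpha}$-cones of opening $M$. Suppose $x_0 \in \underline{G}_N(u,\Omega)$: there is a convex $C^{1,\alpha}$-cone $\psi(x) = \ell(x) + \frac{N}{2}|x-x_0|^{1+\alpha}$ with $u(x_0) = \psi(x_0)$ and $u(x) < \psi(x)$ for $x \in B_{\tau_0}(x_0)\setminus\{x_0\}$. Since $M > N$, the cone $\tilde\psi(x) = \ell(x) + \frac{M}{2}|x-x_0|^{1+\alpha}$ still satisfies $\tilde\psi(x_0) = \ell(x_0) = u(x_0)$, and for $x \neq x_0$ we have $\tilde\psi(x) = \psi(x) + \frac{M-N}{2}|x-x_0|^{1+\alpha} > \psi(x) > u(x)$, so $x_0 \in \underline{G}_M(u,\Omega)$. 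The same monotonicity argument applies verbatim to the concave cones (with the inequality $-\frac{M}{2}|x-x_0|^{1+\alpha} \le -\frac{N}{2}|x-x_0|^{1+\alpha}$), giving $\overline{G}_N \subset \overline{G}_M$; intersecting yields $G_N(u,\Omega) \subset G_M(u,\Omega)$.

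For part (ii), I would treat the two cases $K > 0$ and $K < 0$ separately, because multiplying by a negative constant swaps the roles of convex and concave cones. If $K > 0$ and $x_0 \in \underline{G}_{M/K}(u,\Omega)$ via a convex cone $\psi(x) = \ell(x) + \frac{M/K}{2}|x-x_0|^{1+\alpha}$, then $K\psi(x) = K\ell(x) + \frac{M}{2}|x-x_0|^{1+\alpha}$ is a convex $C^{1,\alpha}$-cone of opening $M$ with vertex $x_0$, and the conditions $u(x_0) = \psi(x_0)$, $u(x) < \psi(x)$ multiply through by $K > 0$ to give $(Ku)(x_0) = (K\psi)(x_0)$ and $(Ku)(x) < (K\psi)(x)$; hence $x_0 \in \underline{G}_M(Ku,\Omega)$. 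This shows $\underline{G}_{M/K}(u,\Omega) \subset \underline{G}_M(Ku,\Omega)$, and the reverse inclusion follows by replacing $u$ with $Ku$ and $K$ with $1/K$; similarly for $\overline{G}$. When $K < 0$, a convex cone $\psi$ for $u$ becomes, after multiplication by $K$, the function $K\ell(x) - \frac{|K|(M/|K|)}{2}|x-x_0|^{1+\alpha} = K\ell(x) - \frac{M}{2}|x-x_0|^{1+\alpha}$, a concave cone of opening $M$; and the inequality $u(x) < \psi(x)$ reverses to $(Ku)(x) > (K\psi)(x)$, so $\underline{G}_{M/|K|}(u,\Omega) \subset \overline{G}_M(Ku,\Omega)$ and dually $\overline{G}_{M/|K|}(u,\Omega) \subset \underline{G}_M(Ku,\Omega)$; intersecting, $G_{M/|K|}(u,\Omega) \subset G_M(Ku,\Omega)$, with equality again from symmetry. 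Since $M/K$ should be read as $M/|K|$ in the statement (the set $G_{M/K}$ only makes sense for positive opening), both cases combine to $G_M(Ku,\Omega) = G_{M/|K|}(u,\Omega)$.

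Neither step presents a genuine obstacle — the proof is essentially a bookkeeping exercise with the definitions. The only point requiring care is the sign of $K$ in part (ii): one must remember that scaling by a negative constant interchanges $\underline{G}$ and $\overline{G}$, so that the final intersection $G_M$ is nonetheless preserved, and one should be slightly careful about whether the paper intends $M/K$ or $M/|K|$ in the statement. I would simply note this and carry out the two cases as above.
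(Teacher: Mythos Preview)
Your proposal is correct and follows the same direct-verification approach as the paper, which simply unfolds the definitions of $\underline{G}_M$ and $\overline{G}_M$ and rescales the touching cones. In fact your argument is more complete: the paper only sketches one inclusion for $\underline{G}_M$ in part (ii) and tacitly treats $K>0$, whereas you correctly handle the sign of $K$ and note that the statement should be read with $M/|K|$.
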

\begin{proof}
The proof of the first item can be found in the discussion after \cite[Definition 4]{Pimentel-Santos-Teixeira2022}. For the second item, we only show that it holds for $\underline{G}_M(u,\Omega)$. Indeed, let $z\in\underline{G}_M(Ku,\Omega)$ then there exist a 
\[
\varphi(x)=\ell(x)-\frac{M}{2}|x-z|^{1+\alpha}
\]
such that $\varphi$ touch $Ku$ from above at $z$, i.e., $\varphi(z)=Ku(z)$ and $\varphi(x)>Ku(x)$, for $x \in B_{r}(z)\setminus\{z\}$. Observe that by defining 
\[
\tilde{\varphi}(y):=\frac{1}{K}\varphi(y)
\]
we have that $\tilde{\varphi}$ touch $u$ from above at $z$, which implies that $z\in\underline{G}_{M/K}(u,\Omega)$. Hence $\underline{G}_{M}(Ku, \Omega) \subset \underline{G}_{M/K}(u, \Omega)$ Analogously we can show that $\underline{G}_{M/K}(u,\Omega)\subset\underline{G}_M(Ku,\Omega)$. This finishes the proof.
\end{proof}

The complementary set of $G_M(u,\Omega)$, i.e., the set of points where a $C^{1,\alpha}$-cone cannot touch the graph of the solution, is also essential in our analysis:

\begin{definition}
Let $\Omega\subset\mathbb{R}^d$ be a bounded open set. For $M>0$ and $0<\tau_0<diam(\Omega')/5$, where $\Omega'\subset \Omega$ is an open subset, we define 
\[
\underline{A}_M(u,\Omega')=\underline{A}_M(\Omega')=\Omega'\setminus \underline{G}_M(u,\Omega').
\]
In addition, we set
\[
\overline{A}_M(u,\Omega')=\overline{A}_M(\Omega')=\Omega'\setminus \overline{G}_M(u,\Omega').
\]
Finally,
\[
A_M(u,\Omega'):=A_M(\Omega')=\Omega'\setminus G_M(u,\Omega').
\]
\end{definition}

We will now define the $C^{1,\alpha}$-aperture function, which is directly related to the integrability of our solutions.

\begin{definition}
For $x\in B\Subset B_1$ we define
\[
\theta(x):=\theta_{1+\alpha}(u,B)(x)=\inf\{M; x\in G_M(u,B)\}\in[0,\infty].
\]
\end{definition}

Next, we recall the definition of a distribution function and the result that connects it to the integrability of $\theta_{1+\alpha}$.

\begin{definition}
Let $\Omega\subset\mathbb{R}^d$, and $g:\Omega\longrightarrow\mathbb{R}$ be a nonnegative and measurable function. Define $\mu_g:\mathbb{R}^+_0\longrightarrow\mathbb{R}^+_0$ as
\[
\mu_g(t):=|\{x\in \Omega ;  g(x)>t\}|,
\]
for $t>0$. The function $\mu_g$ is called the distribution function of $g$.    
\end{definition}

\begin{lemma}\label{lem_eqsob}
 Let $\eta>0$ and $M>1$ be constants. Then, for $0<p<\infty$,
\[
g\in L^p(\Omega) \iff \sum_{k\geq 1}M^{pk}\mu_g(\eta M^k)=S<\infty
\]
and
\[
C^{-1}S\leq \|g\|^p_{L^p(\Omega)}\leq C(|\Omega|+S),
\]
where $C>0$ is a constant depending only on $\eta, M$ and $p$.
\end{lemma}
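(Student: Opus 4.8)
The plan is to compare both quantities directly to the annular masses
\[
a_j := \bigl|\{x\in\Omega : \eta M^j < g(x) \le \eta M^{j+1}\}\bigr|, \qquad j\in\Zz,
\]
obtained from a dyadic (layer–cake) decomposition of $\Omega$ according to the size of $g$. Since $\Omega$ is bounded we have $|\Omega|<\infty$, which is what makes the $|\Omega|$ term on the right harmless.

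\emph{Upper bound on $\|g\|_{L^p}^p$.} Write $\Omega = \{g\le\eta\}\cup\bigcup_{k\ge 0}\{\eta M^k < g \le \eta M^{k+1}\}$ (discarding the null set $\{g=\infty\}$, if nonempty, in which case $S=\|g\|_{L^p}^p=\infty$ and there is nothing to prove). On $\{g\le\eta\}$ bound $g^p\le\eta^p$, and on the $k$-th annulus bound $g^p\le \eta^p M^{p(k+1)}$, using $|\{\eta M^k<g\le\eta M^{k+1}\}|\le \mu_g(\eta M^k)$. Summing gives
\[
\|g\|_{L^p(\Omega)}^p \le \eta^p|\Omega| + \eta^p M^p\Bigl(\mu_g(\eta) + \sum_{k\ge 1}M^{pk}\mu_g(\eta M^k)\Bigr) \le C\bigl(|\Omega| + S\bigr),
\]
since $\mu_g(\eta)\le|\Omega|$, with $C=C(\eta,M,p)$.

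\emph{Lower bound.} For each $k\ge 1$, the set $\{g>\eta M^k\}$ is the disjoint union (up to $\{g=\infty\}$) of the annuli $\{\eta M^j<g\le\eta M^{j+1}\}$ with $j\ge k$, hence $\mu_g(\eta M^k)=\sum_{j\ge k}a_j$. Interchanging the order of summation by Tonelli (all terms nonnegative) and using the finite geometric sum $\sum_{k=1}^{j}M^{pk}\le \frac{M^p}{M^p-1}M^{pj}$ (here $M>1$ is used),
\[
S = \sum_{k\ge 1}M^{pk}\sum_{j\ge k}a_j = \sum_{j\ge 1}a_j\sum_{k=1}^{j}M^{pk} \le \frac{M^p}{M^p-1}\sum_{j\ge 1}M^{pj}a_j \le \frac{M^p}{(M^p-1)\eta^p}\,\|g\|_{L^p(\Omega)}^p,
\]
where in the last step we used that $g>\eta M^j$ on the $j$-th annulus, so $M^{pj}a_j\le \eta^{-p}\int_{\{\eta M^j<g\le\eta M^{j+1}\}}g^p$, and that the annuli are disjoint. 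This gives $C^{-1}S\le\|g\|_{L^p(\Omega)}^p$.

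The equivalence follows at once: if $S<\infty$ then $g\in L^p(\Omega)$ by the first estimate (using $|\Omega|<\infty$), and if $g\in L^p(\Omega)$ then $S<\infty$ by the second. I do not expect a genuine obstacle: the argument is essentially bookkeeping. The only points requiring mild care are disposing of $\{g=\infty\}$ and tracking the constants so that the final $C$ depends only on $\eta$, $M$, and $p$ — in particular the geometric–series factor $\frac{M^p}{M^p-1}$, which is where the hypothesis $M>1$ enters.
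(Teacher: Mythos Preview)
The paper states this lemma without proof; it is a standard characterization of $L^p$-integrability via the distribution function (see, e.g., Caffarelli--Cabr\'e, \emph{Fully Nonlinear Elliptic Equations}, Lemma~4.3). Your layer-cake argument is correct and is precisely the standard proof: decompose $\Omega$ into dyadic level sets of $g$, bound $\|g\|_{L^p}^p$ above term by term, and for the lower bound swap the order of summation and use the geometric series (which is where $M>1$ is needed). The bookkeeping is clean and the constants depend only on $\eta$, $M$, and $p$, as claimed.
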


Notice that the distribution function of $\theta$ is related to the measure of the sets $A_M$. More precisely, we have 
\begin{equation}\label{eq_integtheta}
\mu_{\theta}(t) \leq |A_t(u, B_{1/2})|.    
\end{equation}
Therefore, in order to examine how information is transmitted from $\theta$ to $u$, it is essential to study the summability of 
\[
\sum_{k \geq 1} M^{pk} |A_{M^k}(u, B_{1/2})|.
\] 

In what follows, we will recall a consequence of the Calderón-Zygmund decomposition. Consider the unit cube $Q_1$, which is divided into $2^d$ subcubes, each with half the side length. Each of these subcubes is further subdivided in the same manner, and this process is repeated iteratively. The resulting cubes are known as dyadic cubes. If $Q$ is a dyadic cube distinct from $Q_1$, we refer to $\tilde{Q}$ as the predecessor of $Q$ if $Q$ is one of the $2^d$ subcubes formed by subdividing $\tilde{Q}$.

\begin{lemma}\label{lem_calzyg}
Let $A\subset B\subset Q_1$ be a measurable sets and take $|A|\leq\delta<1$ such that if $Q$ is a dyadic cube satisfying $|A\cap Q|>\delta|Q|$, so $\tilde{Q}\subset B$, where $\tilde{Q}$ is the predecessor of $Q$. Then 
\[
|A|\leq\delta |B|.
\]
\end{lemma}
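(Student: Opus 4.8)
The plan is to run the classical dyadic Calderón–Zygmund stopping-time argument at level $\delta$ applied to the set $A$. First I would observe that the procedure is well posed at the top level: since $|A|\le\delta=\delta|Q_1|$, the cube $Q_1$ does not satisfy the selection inequality $|A\cap Q_1|>\delta|Q_1|$, so we must subdivide it. Concretely, subdivide $Q_1$ into its $2^d$ dyadic children; for each dyadic cube $Q$ produced, select $Q$ and stop if $|A\cap Q|>\delta|Q|$, and otherwise subdivide $Q$ again, iterating indefinitely. This yields a countable family $\{Q_i\}$ of pairwise disjoint dyadic cubes, each a proper subcube of $Q_1$ (hence possessing a predecessor $\tilde Q_i$), with $|A\cap Q_i|>\delta|Q_i|$, and such that $\tilde Q_i$ was itself subdivided rather than selected, i.e. $|A\cap\tilde Q_i|\le\delta|\tilde Q_i|$.

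Next I would show that $A\subset\bigcup_i Q_i$ up to a set of measure zero. By the Lebesgue differentiation theorem, almost every $x\in A$ is a point of density one of $A$. If such an $x$ belonged to no selected cube $Q_i$, then every dyadic cube $Q$ containing $x$ was subdivided in the construction, so $|A\cap Q|\le\delta|Q|$ for all dyadic $Q\ni x$; letting the side length of $Q$ tend to zero forces the density of $A$ at $x$ to be at most $\delta<1$, contradicting density one. Hence $|A|=\sum_i|A\cap Q_i|$.

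Finally I would invoke the hypothesis of the lemma: each selected cube satisfies $|A\cap Q_i|>\delta|Q_i|$, so by assumption its predecessor obeys $\tilde Q_i\subset B$, and therefore $Q_i\subset\tilde Q_i\subset B$. Combining the previous steps with disjointness of the $Q_i$,
\[
|A|=\sum_i|A\cap Q_i|\le\delta\sum_i|Q_i|=\delta\Bigl|\bigcup_i Q_i\Bigr|\le\delta|B|,
\]
which is the asserted estimate.

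I expect the only genuinely delicate points to be (i) checking that the stopping procedure is well posed, which is exactly where the normalization $|A|\le\delta=\delta|Q_1|$ at the top cube enters, and (ii) the density argument establishing that $A$ is covered, modulo a null set, by the selected cubes; everything else is routine bookkeeping with nested/disjoint dyadic cubes.
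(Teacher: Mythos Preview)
Your stopping-time setup and the density/covering argument are fine, but the final displayed chain of inequalities is wrong. You have recorded that the \emph{selected} cubes satisfy $|A\cap Q_i|>\delta|Q_i|$, yet in the last line you write $\sum_i|A\cap Q_i|\le\delta\sum_i|Q_i|$, which is precisely the reverse inequality. So the displayed estimate does not follow from what you have established.

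The fix is to pass to the predecessors, which is where the inequality $|A\cap\tilde Q_i|\le\delta|\tilde Q_i|$ that you already noted actually lives. Since dyadic cubes are either nested or disjoint, from the family $\{\tilde Q_i\}$ you can extract the maximal elements $\{\tilde Q_{i_k}\}$; these are pairwise disjoint, satisfy $\bigcup_i Q_i\subset\bigcup_i\tilde Q_i=\bigcup_k\tilde Q_{i_k}\subset B$ (the last inclusion by the hypothesis of the lemma), and each obeys $|A\cap\tilde Q_{i_k}|\le\delta|\tilde Q_{i_k}|$ because it was subdivided rather than selected. Then
\[
|A|\le\sum_k|A\cap\tilde Q_{i_k}|\le\delta\sum_k|\tilde Q_{i_k}|=\delta\Bigl|\bigcup_k\tilde Q_{i_k}\Bigr|\le\delta|B|,
\]
which is the correct conclusion. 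The paper itself states this lemma without proof (it is the standard Calder\'on--Zygmund covering lemma, as in Caffarelli--Cabr\'e), so there is no alternative argument to compare with; but as written your final step needs this correction.
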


\begin{proposition}\label{Maximal function}
Let $g\in L^1_{\text{loc}}(\mathbb{R}^d)$, the Maximal function of $g$, denoted by $M(g)$ is defined as
\[
M(g)(x):=\sup_{r>0}\frac{1}{Q_r(x)}\int_{Q_r(x)}|g(y)|dy.
\]
The Maximal operator satisfies
\[
|\{x\in \mathbb{R}^d; M(g)(x)\geq t\}|\leq\frac{C}{t}\|g\|_{L^1(\mathbb{R}^d)}, \quad \forall t>0,
\]
and for $g\in L^p(\mathbb{R}^d)$ with $1<p\leq\infty$ we have
\[
\|M(g)\|_{L^p(\mathbb{R}^d)}\leq C(d,p)\|g\|_{L^p(\mathbb{R}^d)}.
\]
Moreover, if $g\in L^p(\mathbb{R}^d)$ with $1\leq p\leq\infty$, then $M(g)$ is finite almost everywhere.
\end{proposition}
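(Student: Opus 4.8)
The statement is the classical Hardy--Littlewood maximal theorem, so my plan is to prove its three assertions in sequence, each feeding into the next (alternatively, since the paper only \emph{recalls} this fact, one could just cite a reference; below I sketch a self-contained route). \textbf{Step 1 (weak $(1,1)$ bound).} I would fix $t>0$ and bound the measure of $E_t=\{x\in\mathbb{R}^d:\,M(g)(x)>t\}$; the case $\{M(g)\ge t\}$ then follows since it equals $\bigcap_n\{M(g)>t-1/n\}$ and these sets decrease with finite measure. For any compact $K\subset E_t$ and each $x\in K$, the definition of $M(g)$ provides a cube $Q_{r_x}(x)$ with $\int_{Q_{r_x}(x)}|g|\,dy>t\,|Q_{r_x}(x)|$; by compactness finitely many such cubes cover $K$. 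A Vitali-type selection (repeatedly pick one of largest side length among them and discard those meeting it) extracts pairwise disjoint cubes $Q_1,\dots,Q_N$ with $K\subset\bigcup_{j=1}^{N}3Q_j$, where $3Q_j$ is the concentric dilate by factor $3$. Hence
\[
|K|\le\sum_{j=1}^{N}|3Q_j|=3^d\sum_{j=1}^{N}|Q_j|\le\frac{3^d}{t}\sum_{j=1}^{N}\int_{Q_j}|g|\,dy\le\frac{3^d}{t}\|g\|_{L^1(\mathbb{R}^d)},
\]
the last step using disjointness; taking the supremum over compact $K\subset E_t$ finishes Step 1 with $C=3^d$.

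\textbf{Step 2 ($L^p$ bound, $1<p\le\infty$).} The endpoint $p=\infty$ is immediate, since each cube-average of $|g|$ is at most $\|g\|_{L^\infty(\mathbb{R}^d)}$. For $1<p<\infty$ I would interpolate by truncation: for $t>0$ split $g=g_1+g_2$ with $g_1:=g\,\chi_{\{|g|>t/2\}}$, so that $M(g)\le M(g_1)+t/2$ and thus $\{M(g)>t\}\subset\{M(g_1)>t/2\}$; Step 1 then gives $|\{M(g)>t\}|\le\tfrac{2\cdot 3^d}{t}\int_{\{|g|>t/2\}}|g|\,dy$. Inserting this into the layer-cake identity $\|M(g)\|_{L^p}^p=p\int_0^\infty t^{p-1}|\{M(g)>t\}|\,dt$ and applying Fubini yields
\[
\|M(g)\|_{L^p(\mathbb{R}^d)}^p\le 2\cdot 3^d\,p\int_{\mathbb{R}^d}|g(x)|\Big(\int_0^{2|g(x)|}t^{p-2}\,dt\Big)dx=\frac{2^p\,3^d\,p}{p-1}\,\|g\|_{L^p(\mathbb{R}^d)}^p,
\]
which is the claimed inequality with $C(d,p)=\big(2^p 3^d p/(p-1)\big)^{1/p}$.

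\textbf{Step 3 (a.e.\ finiteness).} For $p=\infty$ this is contained in Step 2; for $1<p<\infty$ it follows from $M(g)\in L^p(\mathbb{R}^d)$; and for $p=1$ from Step 1, since $|\{M(g)=\infty\}|\le|\{M(g)>t\}|\le\tfrac{3^d}{t}\|g\|_{L^1(\mathbb{R}^d)}\to 0$ as $t\to\infty$. I expect the only genuinely nontrivial ingredient to be the covering argument underlying the weak $(1,1)$ estimate in Step 1; Steps 2 and 3 are the standard Marcinkiewicz truncation and elementary limit arguments, respectively.
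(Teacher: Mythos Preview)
Your proof is correct and follows the standard textbook route (Vitali-type covering for the weak $(1,1)$ estimate, truncation/Marcinkiewicz interpolation for the strong $L^p$ bound, and elementary limit arguments for a.e.\ finiteness). The paper itself does not supply a proof of this proposition: it is stated in the preliminaries as a classical result and used as a black box later on (e.g., in Lemma~\ref{lemma 6} and Proposition~\ref{eq th1}). So there is no ``paper's own proof'' to compare against; your self-contained argument is exactly what one would write if asked to justify the statement, and your parenthetical remark that one could simply cite a reference is in fact what the paper effectively does.
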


We now define a function $\beta$ that quantifies the oscillation of $F$ with respect to the variable $x$. This function is closely linked to the approximation techniques used in \cite{Caffarelli1989} and subsequent works. We refer to Assumption $A\ref{A-oscF}$ below for further details.

\begin{definition}\label{def_oscF}
Let $F:\mathcal{S}(d)\times\Omega\rightarrow\mathbb{R}^d$ be continuous in $x$. We define
\[
\beta_{F}(x,y):=\beta(x,y)=\sup_{M\in S(d)\setminus\{0\}}\frac{|F(M,x)-F(M,y)|}{\|M\|}.
\]
\end{definition}

In the sequence, we define the fractional Sobolev spaces. We refer the reader to \cite[chapter 2]{Nezza-Palatucci-Valdinoci-2012}

\begin{definition}
Let $s\in (0,1)$ fixed. For any $p\in [1,\infty)$ we define the space $W^{s,p}(\Omega)$ as follows
\[
W^{s,p}(\Omega):=\left\{u \in L^p(\Omega); \frac{|u(x)-u(y)|}{|x-y|^{\frac{d}{p}+s}}\in L^p(\Omega\times\Omega)\right\},
\]
equipped with a natural norm
\[
\|u\|_{W^{s,p}(\Omega)}:=\left(\int_{\Omega}|u|^pdx+\int_{\Omega}\int_{\Omega}\frac{|u(x)-u(y)|^p}{|x-y|^{d+sp}}dxdy\right)^{1/p},
\]
where
\[
[u]_{W^{s,p}(\Omega)}:=\left(\int_{\Omega}\int_{\Omega}\frac{|u(x)-u(y)|^p}{|x-y|^{d+sp}}dxdy\right)^{1/p}
\]
is the so-called Gagliardo semi-norm of $u$.
\end{definition}

In the same spirit, the Sobolev spaces involving weak derivatives of higher fractional order are defined in the following way:
\begin{definition}
If $s>1$, we write $s=m+\gamma$, where $m$ is an integer and $\gamma\in(0,1)$. We define
\[
W^{s,p}(\Omega):=\{u \in W^{m,p}(\Omega);D^{\alpha}u\in W^{\gamma,p}(\Omega),\text{ for any }\alpha \text{such that }  |\alpha|=m\},
\]
equipped with the following norm
\[
\|u\|_{W^{s,p}(\Omega)}:=\left(\|u\|^p_{W^{m,p}(\Omega)}+\sum_{|\alpha|=m}\|D^{\alpha}u\|^p_{W^{\gamma,p}(\Omega)}\right)^{1/p}.
\]
\end{definition}

Finally, we say that a given constant is universal when it only depends on the dimension $d$ and on the ellipticity constants $\lambda$ and $\Lambda$.

\subsection{Assumptions and main results}

Next, we detail the main assumptions of the paper. We begin with the uniformly elliptic condition on $F$.

\begin{Assumption}[Uniformly elliptic]\label{A-ellip}
We suppose that $F : {\mathcal S}(d)\times B_1^+ \to \R  $ is uniformly elliptic, i.e., there exist positive constants $\lambda \leq \Lambda$ such that 
\[
\lambda\|N\| \leq F(M+N,x) - F(M,x) \leq \Lambda\|N\|,
\]
for any $x \in B_1$ and $M, N \in {\mathcal S}(d)$, with $N \geq 0$.
\end{Assumption}

The next assumption concerns the integrability of the source term.

\begin{Assumption}[Integrability o the source term]\label{A-sourceterm} We assume that $f \in L^p(B_1^+)$, for $p > d$. Moreover, there exists a positive constant $C$ such that
\[
\|f\|_{L^p(B_1^+)} \leq C.
\]
\end{Assumption}

To establish the first result (see Theorem \ref{theo1} below), we need to assume the continuity of $F$ in the variable $x$, allowing us to connect our operator to another one with constant coefficients.

\begin{Assumption}[Continuity of $F$]\label{A-cont}
We assume the operator $F: {\mathcal S}(d)\times B_1^+ \to \R $ is continuous in the variable $x$.    
\end{Assumption}

As mentioned above, a smallness regime on the oscillation of $F$ in the $x$ variable is essential to apply the approximation procedure presented in \cite{Caffarelli1989}. This is the content of the next assumption.

\begin{Assumption}[Oscillation decay]\label{A-oscF} We assume that there exists a sufficiently small $\beta_0 > 0$ such that
\[
\dashint_{B_{r}(x_0)\cap B_1^+}\beta_{F}(x, x_0)^ddx \leq \beta_0^d,
\]
for any $x_0 \in B_1^+$ and all $r > 0$.
\end{Assumption}

We now present the main results of this paper, starting with a result concerning zero Dirichlet boundary conditions on a flat boundary.

\begin{theorem}\label{theo1}
Let $u \in C(\overline{B}^+_1)$ be a bounded viscosity solution to
\begin{equation*}
\begin{cases}
    F(D^2u,x)=f(x) \quad &\text{in} \quad B^+_1\\
    u=0 \quad &\text{on} \quad \partial B^+_1.
\end{cases}
\end{equation*}
Assume that $A\ref{A-ellip}$-$A\ref{A-oscF}$ are satisfied and $p \in (d,+\infty)$ and $\varepsilon\in (0,\alpha_0)$. Then we can find $g \in L^p(B^+_{1/2})$ such that the function $\tilde{u} := u\chi_{B_{1/2}^+}$ is a weak solution to
\begin{equation}
\begin{cases}\label{eq_fraclap1}
    (-\Delta)^{\frac{\sigma}{2}}\tilde{u}=g \quad &\text{in} \quad B^+_{1/2},\\
    \tilde{u}=0 \quad &\text{in} \quad \R^d\setminus B^+_{1/2},
\end{cases}
\end{equation}
where $\sigma \in (0, 1+\varepsilon)$.
\end{theorem}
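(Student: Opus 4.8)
The plan is to follow the blueprint of \cite{Pimentel-Santos-Teixeira2022}, adapted to the boundary setting with a flat face. The central object is the $C^{1,\alpha}$-aperture function $\theta = \theta_{1+\alpha}(u, B_{1/2}^+)$, and the goal is twofold: first, to show that $\theta \in L^p(B_{1/2}^+)$, which by the fractional Sobolev embedding and the characterization of $W^{1+\varepsilon,p}$ via touching cones gives $u \in W^{1+\varepsilon,p}(B_{1/2}^+)$ for every $\varepsilon \in (0,\alpha_0)$; and second, to exhibit the function $g$ so that $\tilde u$ solves the fractional Laplacian problem \eqref{eq_fraclap1}. For the first part, by Lemma \ref{lem_eqsob} and the inequality \eqref{eq_integtheta} it suffices to establish a geometric decay estimate of the form
\[
|A_{M^{k}}(u, B_{1/2}^+)| \le C \, \rho^{k}, \qquad k \ge 1,
\]
for some $\rho \in (0,1)$ and $M > 1$ large, with $\rho M^{p} < 1$; this is exactly the summability condition $\sum_k M^{pk}|A_{M^k}| < \infty$.

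The decay estimate is obtained by the Caccioppoli-type iteration built on the Calderón–Zygmund lemma (Lemma \ref{lem_calzyg}). First I would prove a \emph{single-step} measure estimate: there is a universal $\mu \in (0,1)$ and a large universal $M_0$ such that if $u$ is normalized ($\|u\|_{L^\infty} \le 1$, $\|f\|_{L^p}$ small) then $|A_{M_0}(u, B_{1/2}^+)| \le \mu$. This is where Assumptions A\ref{A-ellip}–A\ref{A-oscF} enter: by Caffarelli's approximation argument, $u$ is $C^{1,\alpha_0}$-close to the solution $h$ of the constant-coefficient homogeneous problem $F(D^2 h, x_0) = 0$ with the same boundary data on a flat piece of $\partial B_1^+$. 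By the boundary $C^{1,\alpha_0}$ estimates for $h$ (which hold here \emph{without} convexity, since the singularities of the Nadirashvili–Vlăduţ examples are interior — this is precisely the point emphasized in the introduction), $h$ can be touched by a $C^{1,\alpha_0}$-cone of universal opening at every point, and the closeness transfers this to $u$ off a set of small measure. Then one localizes and rescales: the property "$x_0 \in G_M$" is checked on balls $B_{\tau_0}(x_0)$, so the single-step estimate can be applied on each dyadic subcube where the cone of opening $M^{k}$ fails, and Lemma \ref{lem_calzyg} upgrades "small on each bad cube" to "small globally," yielding $|A_{M_0^{k+1}}| \le \mu |A_{M_0^{k}}|$ and hence the desired $\rho = \mu$. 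The scaling in Lemma \ref{GM properties}(ii) — $G_M(Ku) = G_{M/K}(u)$ — is what makes the iteration self-similar.

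Once $\theta \in L^p$, the passage to the fractional equation is the more formal part. Knowing $u \in W^{1+\varepsilon,p}(B_{1/2}^+)$ with zero trace (flat boundary, zero data) means $\tilde u = u\chi_{B_{1/2}^+} \in W^{\sigma,p}(\R^d)$ for $\sigma = 1+\varepsilon$ up to choosing $\varepsilon$ slightly below $\alpha_0$; then $(-\Delta)^{\sigma/2}\tilde u$ is a well-defined distribution, and one \emph{defines} $g := (-\Delta)^{\sigma/2}\tilde u$ and must check $g \in L^p(B_{1/2}^+)$. For the latter I would use the pointwise representation of the fractional Laplacian as a singular integral and split it: the contribution from $\R^d \setminus B_{1/2}^+$ is harmless since $\tilde u \equiv 0$ there and one is at positive distance, while the local contribution is controlled by a maximal-function bound (Proposition \ref{Maximal function}) applied to the second-order incremental quotients of $u$, which are integrable precisely because $\theta \in L^p$ encodes the $W^{1+\varepsilon,p}$ regularity. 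The weak formulation of \eqref{eq_fraclap1} then holds by density of test functions.

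The main obstacle I anticipate is the \emph{boundary} version of the single-step measure estimate — specifically, ensuring that cones of \emph{universal} opening genuinely touch the approximating function $h$ from both sides at boundary points and in a full neighborhood $B_{\tau_0}(x_0)$ that may stick out of $B_1^+$. Near the flat face one must use the zero boundary condition to extend $h$ (by odd reflection, or to control it via barriers) so that the boundary $C^{1,\alpha_0}$ estimate of \cite{Silvestre-Sirakov2014} applies uniformly up to and across $\partial B_1^+$; getting the geometry of the touching sets right here, and making the constants in the Calderón–Zygmund covering respect the half-ball $B_{1/2}^+$ rather than a full cube, is the delicate bookkeeping. A secondary technical point is verifying that $\sigma$ can be taken all the way up to (but not including) $1+\alpha_0$ uniformly, which forces $\varepsilon \in (0,\alpha_0)$ and explains the open interval in the statement.
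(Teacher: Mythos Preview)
Your proposal is essentially the paper's approach: show $\theta_{1+\alpha}\in L^p(B^+_{1/2})$ via a Calder\'on--Zygmund iteration built on an approximation lemma producing a $C^{1,\alpha_0}$-close function $h$, then define $g:=(-\Delta)^{\sigma/2}\tilde u$ and verify $g\in L^p$ by splitting the singular integral into a near part (controlled pointwise by $\theta$, since $|\Delta^{1+\alpha}_h\tilde u(x_0)|\le\theta(x_0)$ for $|h|$ small) and a far part (controlled by $\|u\|_{L^\infty}$).

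Two points worth flagging. First, your iteration $|A_{M^{k+1}}|\le\mu|A_{M^k}|$ is too clean: under rescaling to a dyadic cube the source term does \emph{not} stay uniformly small, so the ``bad'' set $B$ at each step must also include $\{M(f^d)\ge (C_0M^k)^d\}$, giving instead $\alpha_{k+1}\le\varepsilon_0(\alpha_k+\beta_k)$ with $\beta_k$ the measure of that superlevel set. Summability of $\sum M^{pk}\alpha_k$ then needs the strong maximal inequality $M(f^d)\in L^{p/d}$ and the choice $\varepsilon_0 M^p<1$; this is the standard Caffarelli mechanism, but it is not optional. Second, the boundary handling is simpler than you anticipate: no odd reflection is used. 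The approximation lemma delivers $h\in C^{1,\alpha_0}(\overline{B^+_{12\sqrt d}})$ directly (boundary $C^{1,\alpha_0}$ estimates for the homogeneous constant-coefficient problem hold without convexity), and the geometry near $\{x_d=0\}$ is dealt with by a case split on $\operatorname{dist}(x_0,\{x_d=0\})$: if the cube center is very close to the flat face, the thin strip has measure $\le\sigma/2$ and the remainder sits at uniform positive distance where the interior estimate applies. The Calder\'on--Zygmund covering is run on $Q_1^{d-1}\times(0,1)$ rather than a full cube, with separate treatment of dyadic cubes near and far from the boundary.
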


We follow the approach presented in \cite{Pimentel-Santos-Teixeira2022}, with several improvements. Using Lemma \ref{lem_eqsob}, we show that $\theta_{1+\alpha} \in L^p(B^+_{1/2})$, which enables us to prove that a fractional-order quotient of $u$ also lies in $L^p$. After further analysis, we demonstrate that $u$ satisfies \eqref{eq_fraclap1}. This method contrasts significantly with \cite{Caffarelli1989}, where paraboloids, rather than cones, are employed, allowing for the straightforward conclusion that $\theta \in L^p$ implies $D^2u \in L^p$, thus yielding the desired $W^{2,p}$-regularity. Additionally, Sobolev embeddings ensure that Theorem \ref{theo1} is optimal in terms of differentiability; otherwise, we would conclude that the solutions belong to $C^{1,\alpha}(B_1^+)$ for $\alpha > \alpha_0$, leading to a contradiction.

By employing the regularity theory known for the fractional Laplacian, we can obtain the following result:

\begin{corollary}\label{cor_fracreg}
Under the hypotheses of Theorem \ref{theo1}, we have that $u \in W^{\gamma, p}(B_{1/2}^+)$ with the estimate
\[
\|u\|_{W^{\gamma,p}(B_{1/2}^+)} \leq C(\|u\|_{L^\infty(B_1^+)} +\|f\|_{L^p(B_1^+)}),
\]
for all $\gamma < \sigma$, where $C=C(d,\lambda,\Lambda,\alpha_0,\sigma,p)$ is a positive constant.
\end{corollary}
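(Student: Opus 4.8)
The plan is to deduce Corollary \ref{cor_fracreg} from Theorem \ref{theo1} by invoking the global regularity theory for the Dirichlet problem of the fractional Laplacian. By Theorem \ref{theo1}, the function $\tilde u = u\chi_{B_{1/2}^+}$ is a weak solution to $(-\Delta)^{\sigma/2}\tilde u = g$ in $B_{1/2}^+$ with $\tilde u \equiv 0$ in $\mathbb{R}^d\setminus B_{1/2}^+$ and $g\in L^p(B_{1/2}^+)$, $p>d$, for $\sigma\in(0,1+\varepsilon)$ and $\varepsilon\in(0,\alpha_0)$. The standard $L^p$-estimate for the fractional Poisson equation (e.g.\ the results of Grubb, or of Ros-Oton--Serra for Dirichlet exterior data on bounded regular domains) gives $\tilde u \in H^{\sigma,p}$ — the Bessel-potential space — with $\|\tilde u\|_{H^{\sigma,p}}\lesssim \|g\|_{L^p}$, and, since the Bessel-potential scale embeds into the Sobolev--Slobodeckij scale once we drop an arbitrarily small amount of smoothness, $\tilde u\in W^{\gamma,p}$ for every $\gamma<\sigma$, with the corresponding estimate. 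Because $\sigma$ can be chosen arbitrarily close to $1+\varepsilon$ and $\varepsilon$ arbitrarily close to $\alpha_0$, the range $\gamma<\sigma$ covers all $\gamma<1+\varepsilon$ for the given $\varepsilon$, which is what the statement asserts.

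Concretely, the steps are as follows. First, I would fix $\gamma<\sigma$ and recall the regularity estimate for the homogeneous Dirichlet problem for $(-\Delta)^{\sigma/2}$ on a bounded domain: if $(-\Delta)^{\sigma/2}v=g$ in $D$ with $v=0$ in $\mathbb{R}^d\setminus D$ and $g\in L^p(D)$, then $v\in W^{\gamma,p}(D)$ for all $\gamma<\min\{\sigma,\,\sigma - (\text{boundary loss})\}$; for $\sigma<2$ and $p>d$ the interior contribution is $\sigma$ and the only genuine constraint near $\partial D$ is the strict inequality $\gamma<\sigma$, so no extra restriction appears. Second, I would apply this with $v=\tilde u$, $D=B_{1/2}^+$, obtaining $\|\tilde u\|_{W^{\gamma,p}(B_{1/2}^+)}\le C\|g\|_{L^p(B_{1/2}^+)}$ with $C=C(d,\lambda,\Lambda,\alpha_0,\sigma,p)$. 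Third, I would track the dependence of $\|g\|_{L^p(B_{1/2}^+)}$ on the data: from the construction of $g$ in the proof of Theorem \ref{theo1} — where $g$ is built from the $C^{1,\alpha}$-aperture function $\theta_{1+\alpha}$ and the bounds for $A_{M}$-sets — one has $\|g\|_{L^p(B_{1/2}^+)}\le C(\|u\|_{L^\infty(B_1^+)}+\|f\|_{L^p(B_1^+)})$. Chaining the two inequalities yields the claimed estimate. Finally, since on $B_{1/2}^+$ we have $u=\tilde u$, the regularity and the estimate transfer verbatim to $u$.

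The main obstacle is quoting the fractional-Laplacian regularity result in exactly the form needed: the literature on $L^p$-estimates for $(-\Delta)^{\sigma/2}$ with exterior Dirichlet condition is stated in several different function-space scales (Bessel potential $H^{s,p}$, Besov $B^s_{p,q}$, Sobolev--Slobodeckij $W^{s,p}$) and with various hypotheses on $\partial D$ and on the exponents $s,p$; one must check that $B_{1/2}^+$ is regular enough (it is Lipschitz, indeed piecewise smooth, which suffices) and that the strict loss $\gamma<\sigma$ absorbs whatever boundary regularity deficit the cited theorem carries. A secondary, more bookkeeping-type point is that the constant $C$ genuinely depends only on the listed parameters; this requires noting that $\sigma$, $\varepsilon$, $p$, $d$, $\lambda$, $\Lambda$, $\alpha_0$ are the only quantities entering either the fractional estimate or the bound on $\|g\|_{L^p}$ established in Theorem \ref{theo1}.
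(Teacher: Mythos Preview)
Your proposal is correct and follows essentially the same approach as the paper: the authors' proof of Corollary \ref{cor_fracreg} is a single sentence invoking the known $L^p$-regularity theory for the Dirichlet problem of the fractional Laplacian (specifically \cite[Corollary 4.7]{AFLY2023}) to pass from \eqref{eq_fraclap1} to $u\in W^{\gamma,p}(B_{1/2}^+)$ for all $\gamma<\sigma$. Your version is more explicit about the function-space embeddings and about tracking the bound $\|g\|_{L^p}\le C(\|u\|_{L^\infty}+\|f\|_{L^p})$ from the proof of Theorem \ref{theo1}, but the underlying argument is identical.
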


To establish similar regularity for operators as in \eqref{1}, we require a condition equivalent to Assumption $A\ref{A-ellip}$. To this end, we introduce the following (nowadays standard) structural condition.

\begin{Assumption}[Structure condition on $F$]\label{A-strcond}
We suppose that $F$ satisfies the inequality
{\small
\[
{\mathcal M}_{\lambda, \Lambda}^-( M-N) -b|p-q| -c|r-s| \leq F(M, p, r, x) - F(N, q, s, x) \leq {\mathcal M}^+_{\lambda, \Lambda}( M-N)+b|p-q| +c|r-s|,
\]
}
for all $M, N \in {\mathcal S}(d)$, $p, q \in \R^d$, $r, s \in \R$, $x \in \Omega$, and constants $b,c > 0$. Observe that, if $b\equiv c\equiv 0$, then we recover the assumption A\ref{A-ellip}.
\end{Assumption}

We are now ready to state our second main result.

\begin{theorem}\label{prop geral case II}
Let $u\in C(\overline{\Omega})$ be a bounded viscosity solution to
\begin{equation}\label{eq sol prop geral case II}
    \begin{cases}
        F(D^2u,Du,u,x)=f(x) \quad &\text{in} \quad \Omega,\\
        u=\varphi \quad & \text{on} \quad \partial\Omega,
    \end{cases}
\end{equation}
where $\Omega\Subset\mathbb{R}^d$, with $\partial\Omega\in C^{1,1}$, $\varphi\in W^{2,p}(\overline{\Omega})\cap C(\overline{\Omega})$. Suppose further that A\ref{A-sourceterm}, A\ref{A-oscF} and A\ref{A-strcond} hold and $\varepsilon \in (0, \alpha_0)$. Then there exists $\varepsilon_0= \varepsilon_0(d,\lambda/\Lambda,b)$ and $C=C(d,\lambda,\Lambda,b,c,p,r_0)$, such that if $p \in (d-\varepsilon_0,+\infty)$, then $u\in W^{\gamma,p}(\Omega)$ and
\begin{equation}\label{eq prop geral case II}
\|u\|_{W^{\gamma,p}(\Omega)}\leq C\left(\|u\|_{L^{\infty}(\Omega)}+\|f\|_{L^p(\Omega)}+\|\varphi\|_{W^{2,p}(\overline{\Omega})}\right),
\end{equation}
for all $\gamma < 1+\varepsilon$, where $C=C(d,\lambda,\Lambda,\alpha_0,\sigma,p,\overline{\Omega})$ is a positive constant.
\end{theorem}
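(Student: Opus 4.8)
The plan is to reduce Theorem~\ref{prop geral case II} to Theorem~\ref{theo1} and Corollary~\ref{cor_fracreg} via the two standard reductions: flattening the boundary and subtracting off the boundary data. First I would cover $\overline{\Omega}$ by finitely many balls: interior balls $B_{r_0}(x_i)\Subset\Omega$, on which the local $W^{1+\varepsilon,p}$-estimate proved in the Appendix applies directly, and boundary balls $B_{r_0}(x_j)$ with $x_j\in\partial\Omega$. Since $\partial\Omega\in C^{1,1}$, for each boundary ball there is a $C^{1,1}$-diffeomorphism $\Psi_j$ straightening $\partial\Omega\cap B_{r_0}(x_j)$ onto a piece of $\{x_d=0\}$, mapping $\Omega\cap B_{r_0}(x_j)$ into a half-ball. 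The function $v:=(u-\varphi)\circ\Psi_j^{-1}$ then solves a fully nonlinear equation of the same type on a half-ball: the $C^{1,1}$-regularity of $\Psi_j$ guarantees that the transformed operator $\widetilde F$ is still $(\lambda',\Lambda')$-elliptic with a possibly larger first-order coefficient $b'$, still satisfies the structure condition A\ref{A-strcond}, and that the oscillation assumption A\ref{A-oscF} is preserved (the Jacobian terms are $L^\infty$, and the $C^{1,1}$ modulus controls how $\beta_{\widetilde F}$ compares to $\beta_F$). Subtracting $\varphi\in W^{2,p}$ contributes a new right-hand side term involving $D^2\varphi, D\varphi,\varphi$, which stays in $L^p$ by A\ref{A-sourceterm} and $\varphi\in W^{2,p}$, and makes the boundary condition homogeneous: $v=0$ on the flat part.

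Next I would pass from the structure condition A\ref{A-strcond} to the setting of Theorem~\ref{theo1}. The point is that a viscosity solution of an equation satisfying A\ref{A-strcond} with right-hand side $f$ is, in particular, in the appropriate Pucci class — i.e. it is a viscosity subsolution of $\mathcal M^+_{\lambda,\Lambda}(D^2v)+b|Dv| \ge f$ and supersolution of $\mathcal M^-_{\lambda,\Lambda}(D^2v)-b|Dv|\le f$ — and by the $C^{1,\alpha}$ boundary estimates (this is where $p>d-\varepsilon_0$ with the Escauriaza exponent enters, together with $\|\varphi\|_{W^{2,p}}$) the gradient term $b|Dv|$ can be absorbed into the right-hand side as an $L^p$ function, reducing to an operator $G(D^2v,x)=\tilde f(x)$ with $G$ uniformly elliptic and continuous in $x$, to which Theorem~\ref{theo1} and Corollary~\ref{cor_fracreg} apply. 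This yields $v\in W^{\gamma,p}$ of the half-ball with the estimate controlled by $\|v\|_{L^\infty}+\|\tilde f\|_{L^p}$, hence, transporting back through $\Psi_j$ (bounded $C^{1,1}$ change of variables preserves $W^{\gamma,p}$ for $\gamma\in(1,2)$, since $\gamma<2$ and the chain rule only costs $L^\infty$ Jacobian factors and $C^{0,1}$ second-derivative factors), a bound for $u=v\circ\Psi_j+\varphi$ in $W^{\gamma,p}(\Omega\cap B_{r_0/2}(x_j))$ in terms of $\|u\|_{L^\infty(\Omega)}+\|f\|_{L^p(\Omega)}+\|\varphi\|_{W^{2,p}(\overline\Omega)}$.

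Finally I would patch the pieces together. Using a partition of unity subordinate to the finite cover $\{B_{r_0/2}(x_i)\}\cup\{B_{r_0/2}(x_j)\}$ of $\overline\Omega$, and the fact that the $W^{\gamma,p}$-norm is subadditive over a finite cover (the Gagliardo seminorm over $\Omega\times\Omega$ splits into the diagonal-block contributions, controlled by the local seminorms, plus an off-diagonal contribution where $|x-y|$ is bounded below and hence dominated by $\|u\|_{L^p}^p\lesssim\|u\|_{L^\infty}^p$), we obtain the global estimate~\eqref{eq prop geral case II} for all $\gamma<1+\varepsilon$. The dependence of the constants is tracked through: $\varepsilon_0$ depends on $d,\lambda/\Lambda,b$ as in the Escauriaza/\'{S}wi\k{e}ch theory, and the final constant on $d,\lambda,\Lambda,\alpha_0,\sigma,p$ and on $\overline\Omega$ through the finite number of charts and the $C^{1,1}$ norms of the $\Psi_j$.

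\medskip

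\noindent\textbf{Main obstacle.} The delicate point is checking that the flattening diffeomorphism does \emph{not} destroy the hypotheses of Theorem~\ref{theo1}: one must verify that $\widetilde F$ remains uniformly elliptic after the $C^{1,1}$ change of variables (true, since the Hessian transforms by $D^2v\mapsto (D\Psi)^{\!\top}D^2v\,(D\Psi)$ plus lower-order terms, and $(D\Psi)^{\!\top}(\cdot)(D\Psi)$ preserves positivity with new constants $\lambda',\Lambda'$ comparable to $\lambda,\Lambda$), and, more subtly, that the small-oscillation condition A\ref{A-oscF} is inherited — this requires that the extra $x$-dependence introduced through $D\Psi(x)$ has controlled oscillation, which is exactly what $\partial\Omega\in C^{1,1}$ buys, since then $D\Psi$ is Lipschitz and its oscillation over $B_r$ is $O(r)$, small compared with $\beta_0$ after shrinking $r_0$. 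The second, more technical nuisance is justifying that a $C^{1,1}$ (not $C^2$) change of variables preserves $W^{\gamma,p}$ for $\gamma$ just above $1$; this is fine because $\gamma<2$ means we only ever differentiate once classically and then take a fractional quotient of order $\gamma-1<1$, for which a Lipschitz second derivative of $\Psi$ suffices.
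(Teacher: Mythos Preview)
Your strategy matches the paper's: reduce to zero boundary data by subtracting $\varphi$, flatten the boundary via a $C^{1,1}$ chart, absorb the lower-order terms into the source using a priori gradient estimates, and then invoke the flat-boundary result (the paper packages this last step as Proposition~\ref{prop geral case I}, which feeds into Corollary~\ref{cor_fracreg}). Two technical points you gloss over deserve mention. First, for $p\in(d-\varepsilon_0,d]$ one does not have $C^{1,\alpha}$ estimates; the correct input---and what the paper actually uses, via \cite{nikki2009}---is the boundary $W^{1,p}$ estimate, which is exactly enough to place $b|Dv|+c|v|$ in $L^p$. Second, and more substantively, the reduced operator $G(M,x):=F(M,0,0,x)$ is \emph{not} assumed continuous in $x$ (A\ref{A-cont} is not among the hypotheses of Theorem~\ref{prop geral case II}), so Theorem~\ref{theo1} does not apply directly as you claim; the paper closes this gap by mollifying $G$ in the $x$-variable, obtaining approximate solutions with uniform $W^{\gamma,p}$ bounds, and passing to the limit via weak lower semicontinuity of the norm.
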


Notice that for non-zero boundary conditions, the boundary data must lie in $W^{2,p}$ due to the second-order nature of the operator. The proof of Theorem \ref{prop geral case II} builds on Theorem \ref{theo1}, Corollary \ref{cor_fracreg} and the arguments from \cite{nikki2009}, including the already established $W^{1,p}$ regularity.

\begin{remark}\label{rem_escau}
Notice that as in \cite{nikki2009}, the condition $p \in (d, +\infty)$, can be improved to $p \in (d-\varepsilon_0, +\infty)$. It is sufficient to use the results from \cite{Escauriaza93}, together with the boundary Harnack inequality and global H\"{o}lder inequality for $W^{2, d-\varepsilon_0}$ viscosity solutions from \cite{nikki2009}.     
\end{remark}

\begin{remark}\label{rem_w2p}
We assume that viscosity solutions to
\[
F(D^2u, x) = 0 \quad \mbox{in} \quad B^+_1,
\]
belong to $W^{2,p}(B_1^+)$, though estimates in these spaces are not utilized. This result is obtained through an approximation procedure, detailed in Proposition \ref{prop_w2papp} in the Appendix (Section \ref{appendix}). 
\end{remark}

\begin{remark}[Scaling properties]\label{rem_scaling}
Throughout the paper, we assume smallness conditions on certain quantities, as the norms of u and the source term f. We want to stress that such conditions are not restrictive. If $u$ is a viscosity solution to \eqref{1}, then for $\varepsilon > 0$ the normalized function
\[
w(x) := \dfrac{u(x)}{\|u\|_{L^\infty(B_1^+)} + \varepsilon^{-1}\|f\|_{L^p(B_1^+)}}
\]
is such that
$\|w\|_{L^\infty(B_1^+)} \leq 1$, and solves
\[
\tilde{F}(D^2w, x) := \tilde{f}(x) \quad \mbox{in} \quad B^+_1,
\]
where 
\[
\tilde{F}(M, x) = \dfrac{1}{\kappa}F(\kappa M, x)
\]
has the same ellipticity constants as $F$, and
\[
\tilde{f}(x) := \dfrac{f(x)}{\kappa},
\]
for $\kappa := \|u\|_{L^\infty(B_1^+)} + \varepsilon^{-1}\|f\|_{L^p(B_1^+)}$. In particular, we have $\|\tilde{f}\|_{L^p(B_1)^+} \leq \varepsilon$.
\end{remark}

\section{Estimates at the boundary}

In this section, given $\varepsilon \in (0, \alpha_0)$, we prove boundary $W^{\gamma,p}$-estimates for solutions to
\begin{equation}
\begin{cases}\label{eq_mainonlyx}
F(D^2u, x) = f(x) & \mbox{ in } \; B_1^+   \\
u = 0 & \mbox{ on } \; B_1^+\cap \{x_d=0\},
\end{cases}  
\end{equation}
for all $\gamma < 1+\varepsilon$. We begin with a rescaled version of the interior decay rate for the sets $A_t(u, \Omega)$, which can be found in \cite[Proposition 4]{Pimentel-Santos-Teixeira2022}.   

\begin{lemma}\label{Lemma 1}
Let $u\in C(B_{6r\sqrt{d}})$ be a viscosity supersolution to \eqref{1} in $B_{6r\sqrt{d}}$. Suppose $A\ref{A-ellip}$ and $A\ref{A-sourceterm}$ are in force and $\Omega$ is a bounded domain. Extend $f$ by zero outside $B_{6r\sqrt{d}}$. Then there are universal positive constants $\mu,\delta_0$ and $C$, such that $\|f\|_{L^d(B_{6r\sqrt{d}}(x_0))}\leq r^{-1}\delta_0$ implies

\[
|\underline{A}_{t}(u,\Omega)\cap Q_{r}|\leq Ct^{-\mu}r^{-2\mu}|Q_{r}| \hspace{0.4cm} \forall t>0.
\]
\end{lemma}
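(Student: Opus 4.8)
The statement to prove, Lemma \ref{Lemma 1}, is a rescaled version of the interior decay estimate for the ``bad'' sets $\underline{A}_t(u,\Omega)$ from \cite[Proposition 4]{Pimentel-Santos-Teixeira2022}. The plan is therefore to reduce to the known unit-scale statement by a careful change of variables and then track how the various quantities transform. First I would recall the unit-scale result: if $v$ is a viscosity supersolution of an equation with right-hand side $\tilde f$ in a large enough ball $B_{6\sqrt d}$, normalized so that $\|\tilde f\|_{L^d(B_{6\sqrt d})} \le \delta_0$ for a universal $\delta_0$, then $|\underline{A}_t(v,\Omega)\cap Q_1| \le C t^{-\mu}|Q_1|$ for all $t>0$, with $\mu, C$ universal. (This is exactly the form in which the cone-touching decay estimate is proved via the Calder\'on--Zygmund / power-decay machinery, using Lemma \ref{lem_calzyg} and the barrier construction of \cite{Pimentel-Santos-Teixeira2022}.)

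The main step is the scaling. Given $u \in C(B_{6r\sqrt d})$ a supersolution of $F(D^2u,Du,u,x)=f(x)$ with $\|f\|_{L^d(B_{6r\sqrt d}(x_0))}\le r^{-1}\delta_0$, I would set
\[
v(y) := \frac{1}{r^2} u(x_0 + r y), \qquad y \in B_{6\sqrt d}.
\]
Then $v$ is a viscosity supersolution of a rescaled equation $F_r(D^2v, Dv, v, y) = f_r(y)$ with $F_r(M,p,s,y) := F(M, r p, r^2 s, x_0+ry)$, which is still $(\lambda,\Lambda)$-elliptic with the same constants, and $f_r(y) = f(x_0+ry)$. The key computation is the norm of the rescaled source: by the change of variables $x = x_0 + ry$,
\[
\|f_r\|_{L^d(B_{6\sqrt d})}^d = \int_{B_{6\sqrt d}} |f(x_0+ry)|^d\, dy = r^{-d}\int_{B_{6r\sqrt d}(x_0)} |f(x)|^d\, dx = r^{-d}\|f\|_{L^d(B_{6r\sqrt d}(x_0))}^d,
\]
so $\|f_r\|_{L^d(B_{6\sqrt d})} = r^{-1}\|f\|_{L^d(B_{6r\sqrt d}(x_0))} \le \delta_0$, which is precisely the smallness hypothesis needed to apply the unit-scale statement to $v$. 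I would then note that the cone-touching sets scale covariantly: a convex $C^{1,\alpha}$-cone of opening $N$ touching $v$ from below at $y_0$ corresponds, under $x = x_0+ry$, to a convex $C^{1,\alpha}$-cone touching $u$ from below at $x_0+ry_0$ of opening $N r^{1-\alpha}$ (since the quadratic-in-$y$ rescaling of $u$ converts the $\frac{N}{2}|y-y_0|^{1+\alpha}$ term into $\frac{N r^{1+\alpha}}{2}\cdot r^{-2}|x-x_0\text{ minus }ry_0\text{ over }r|^{1+\alpha}$, up to the affine part being absorbed). Hence $\underline{A}_t(u,\Omega) \cap Q_r$ maps to $\underline{A}_{t r^{2}}(v, \tfrac1r(\Omega - x_0)) \cap Q_1$ after accounting for the opening rescaling, and $|Q_r| = r^d |Q_1|$. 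Applying the unit-scale bound with parameter $t r^2$ gives
\[
|\underline{A}_t(u,\Omega)\cap Q_r| = r^d\,|\underline{A}_{t r^{2}}(v,\cdot)\cap Q_1| \le r^d\, C (t r^{2})^{-\mu} |Q_1| = C t^{-\mu} r^{-2\mu} |Q_r|,
\]
which is the claimed inequality (with $\mu$ possibly renamed; the exponent $-2\mu$ on $r$ comes from the $(tr^2)^{-\mu}$ factor together with the $r^d$ volume factor, matching the statement).

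\textbf{Main obstacle.} The genuine bookkeeping difficulty is getting the rescaling of the \emph{cone opening} consistent with the definition of $\underline A_M$, since $\underline A_M$ is defined with a fixed cone opening $M$ and a fixed radius $\tau_0 < \mathrm{diam}(\Omega')/5$, and under parabolic scaling $y\mapsto x_0+ry$ the opening picks up a factor $r^{1-\alpha}$ while the radius $\tau_0$ picks up a factor $r$; one has to verify these are compatible with how \cite[Proposition 4]{Pimentel-Santos-Teixeira2022} is stated (in particular that the admissible radius $\tau_0$ scales correctly and that the constant $6r\sqrt d$ is exactly the ball size needed for the barrier argument of that proposition at scale $r$). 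I expect this is handled by simply invoking the scaling remarks already used in \cite{Pimentel-Santos-Teixeira2022} and absorbing the $r^{1-\alpha}$ discrepancy into the constants (it only improves the decay for $r<1$ and is harmless for the application, where this lemma is combined with a covering argument), but it is the step that needs care rather than the probabilistic/barrier content, which is quoted wholesale from the cited proposition.
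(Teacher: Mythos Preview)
The paper does not supply a proof of this lemma at all: it simply states it as a direct quotation of \cite[Proposition~4]{Pimentel-Santos-Teixeira2022}, which is already formulated at scale $r$. So there is no ``paper's proof'' to compare against; your proposal is in effect reconstructing why the scale-$r$ version follows from a unit-scale statement, which is the natural thing to do.

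Your rescaling strategy is correct, and your computation that a cone of opening $N$ for $v(y)=r^{-2}u(x_0+ry)$ corresponds to a cone of opening $Nr^{1-\alpha}$ for $u$ is right. The gap is that you then write, without justification, that $\underline{A}_t(u,\Omega)\cap Q_r$ maps to $\underline{A}_{tr^{2}}(v,\cdot)\cap Q_1$. From your own opening calculation the correct correspondence is $\underline{A}_{tr^{\alpha-1}}(v,\cdot)\cap Q_1$: a point $x$ is in $\underline{G}_t(u)$ iff the corresponding $y$ is in $\underline{G}_{tr^{\alpha-1}}(v)$. Plugging this into the unit-scale bound yields
\[
|\underline{A}_t(u,\Omega)\cap Q_r| \;\le\; C\,r^d\,(t\,r^{\alpha-1})^{-\mu} \;=\; C\,t^{-\mu}\,r^{(1-\alpha)\mu}\,|Q_r|,
\]
not $C\,t^{-\mu}r^{-2\mu}|Q_r|$. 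For $r\le 1$ (the only regime in which the lemma is invoked, e.g.\ in Lemma~\ref{lemma2} and Lemma~\ref{lemma 6}) one has $r^{(1-\alpha)\mu}\le 1\le r^{-2\mu}$, so the stated estimate does follow---indeed your corrected computation gives something sharper---but the factor $tr^2$ you wrote is simply inconsistent with the $Nr^{1-\alpha}$ you derived two lines earlier. You correctly flagged this bookkeeping as the main obstacle; the fix is just to carry the exponent $\alpha-1$ through rather than replacing it by $2$, and then observe that the resulting bound is stronger than (hence implies) the one in the statement for small $r$.
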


We emphasize that if $u$ is a viscosity solution, the result holds for $A_t(u, \Omega)$. 

\begin{lemma}\label{lemma2}
Let $u\in C(\Omega)$ be a normalized viscosity supersolution to \eqref{1} in a bounded domain $\Omega$, assume that $A\ref{A-ellip}$ and $A\ref{A-sourceterm}$ are in force. Then for any $\Omega'\Subset\Omega$ we have

\[
|\underline{A}_{t}(u,\Omega)\cap \Omega'|\leq C(d,\lambda,\Lambda,\Omega,dist(\Omega',\partial\Omega))(1+\|f\|_{L^p(\Omega)})^{\mu}t^{-\mu}
\]
\end{lemma}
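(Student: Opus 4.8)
The plan is to deduce Lemma \ref{lemma2} from the rescaled interior decay estimate in Lemma \ref{Lemma 1} by a covering argument, using that $\Omega'$ sits at a positive distance from $\partial\Omega$ so that, around each point of $\Omega'$, there is room for a ball $B_{6r\sqrt d}$ contained in $\Omega$ on which the smallness condition $\|f\|_{L^d}\le r^{-1}\delta_0$ can be arranged. First I would fix $r>0$ small enough, depending on $\operatorname{dist}(\Omega',\partial\Omega)$, so that $B_{6r\sqrt d}(x_0)\Subset\Omega$ for every $x_0\in\Omega'$; then I would choose the scale at which Lemma \ref{Lemma 1} applies. Since we only assume $f\in L^p(\Omega)$ with $p>d$ (not the pointwise smallness $\|f\|_{L^d}\le r^{-1}\delta_0$), the trick is a further rescaling: the normalized function absorbs the size of $f$, and by Hölder's inequality $\|f\|_{L^d(B_{6r\sqrt d}(x_0))}\le C r^{d/d-d/p}\|f\|_{L^p(\Omega)} = Cr^{1-d/p}\|f\|_{L^p}$, which is $\le r^{-1}\delta_0$ as soon as $r^{2-d/p}\le \delta_0/(C(1+\|f\|_{L^p}))$, i.e. for $r$ of size a universal constant times $(1+\|f\|_{L^p})^{-1/(2-d/p)}$. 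One then applies Lemma \ref{Lemma 1} with this $r$ at each $x_0$.

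Next I would run the covering: choose a finite family of cubes $\{Q_r(x_i)\}_{i=1}^{N}$ with $x_i\in\Omega'$, covering $\Omega'$, with bounded overlap, and with $N\le C(\Omega,\operatorname{dist}(\Omega',\partial\Omega))\,r^{-d}$. Summing the bound from Lemma \ref{Lemma 1} over $i$ gives
\[
|\underline A_t(u,\Omega)\cap\Omega'|\le\sum_{i=1}^N|\underline A_t(u,\Omega)\cap Q_r(x_i)|\le N\cdot C t^{-\mu}r^{-2\mu}|Q_r| = C N r^{-2\mu} r^d t^{-\mu}.
\]
Since $Nr^d\le C(\Omega,\operatorname{dist}(\Omega',\partial\Omega))$, this collapses to $|\underline A_t(u,\Omega)\cap\Omega'|\le C\,r^{-2\mu}t^{-\mu}$, and substituting the chosen value of $r$ turns $r^{-2\mu}$ into $C(1+\|f\|_{L^p(\Omega)})^{2\mu/(2-d/p)}$. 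Absorbing the harmless exponent $2\mu/(2-d/p)$ (a constant depending only on universal quantities and $p$) into the stated $\mu$ — or simply observing that $(1+\|f\|_{L^p})^{2\mu/(2-d/p)}\le C(1+\|f\|_{L^p})^{\mu}$ when the relevant exponent is $\le\mu$, and otherwise renaming constants — yields the claimed inequality $|\underline A_t(u,\Omega)\cap\Omega'|\le C(1+\|f\|_{L^p(\Omega)})^{\mu}t^{-\mu}$.

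The main obstacle I anticipate is bookkeeping the dependence on $\operatorname{dist}(\Omega',\partial\Omega)$ and $p$ cleanly: one must make sure that the scale $r$ is chosen simultaneously small enough that $B_{6r\sqrt d}(x_0)\Subset\Omega$ (a purely geometric constraint) and small enough that the Hölder-rescaled smallness of $f$ holds, and then track how $r^{-2\mu}$ feeds the power of $\|f\|_{L^p}$ in the final constant. A secondary point is that Lemma \ref{Lemma 1} is stated for $\underline A_t$ of a supersolution on a ball $B_{6r\sqrt d}$, whereas here $\Omega$ is a general bounded domain; this is handled by noting that the cubes $Q_r(x_i)$ only see the solution on small balls well inside $\Omega$, so the local supersolution property on each $B_{6r\sqrt d}(x_i)$ is all that is used, and the global set $\underline A_t(u,\Omega)$ restricted to $Q_r(x_i)$ is contained in the locally-defined bad set. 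The remark following the statement of Lemma \ref{Lemma 1} — that for viscosity solutions the bound holds for $A_t(u,\Omega)$ — then gives the two-sided version automatically, so the same proof delivers the estimate for $A_t$ in place of $\underline A_t$.
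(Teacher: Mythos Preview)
Your covering-plus-Lemma \ref{Lemma 1} strategy is exactly the paper's, and the argument is correct. The one tactical difference is in how the smallness hypothesis $\|f\|_{L^d(B_{6r\sqrt d})}\le r^{-1}\delta_0$ is arranged: the paper fixes $\varepsilon$ purely from the geometry (just $6\varepsilon\sqrt d<\dist(\Omega',\partial\Omega)$) and then \emph{rescales the solution}, setting $\tilde u:=u/K'$ with $K'=\varepsilon\sqrt d\,\delta_0^{-1}\|f\|_{L^d(\Omega)}$, so that the rescaled source $\tilde f$ satisfies the smallness automatically; Lemma \ref{GM properties}~ii) then converts the estimate for $\tilde u$ back to one for $u$, and the factor $K'$ produces exactly the power $\|f\|_{L^d}^{\mu}\le C(1+\|f\|_{L^p})^{\mu}$. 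Your route instead keeps $u$ fixed and shrinks $r$ via H\"older, which forces $r$ to depend on $\|f\|_{L^p}$ and yields $(1+\|f\|_{L^p})^{2\mu/(2-d/p)}$ rather than $(1+\|f\|_{L^p})^{\mu}$.

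The only genuine wrinkle is your last step: you cannot ``absorb'' or ``rename'' the exponent $2\mu/(2-d/p)$ into the stated $\mu$, because the same universal $\mu$ governs the decay $t^{-\mu}$ and is fixed by Lemma \ref{Lemma 1}. Since $2-d/p<2$ for $p<\infty$, your exponent on $(1+\|f\|_{L^p})$ is strictly larger than $\mu$, so your bound is slightly weaker than the one stated. This is harmless for every subsequent use in the paper (Lemmas \ref{lemma3}--\ref{lemma 6} all work under $\|f\|\le 1$, where the power is irrelevant), but if you want the exponent exactly as written, the paper's rescaling of $u$ is the cleaner device.
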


\begin{proof}
Fix $\varepsilon\in (0,1)$ such that $6\varepsilon\sqrt{d}<dist(\Omega',\partial\Omega)$ and choose a finite cover of $\Omega'$ with parallel cubes having side-length $\varepsilon$ and disjoint interior. Observe that $B_{6\varepsilon\sqrt{d}}(x_i)\Subset\Omega$ for all centers $x_i$ of cubes $Q_{\varepsilon}(x_i)$. Let us define the auxiliary function
\[
\tilde{u}:=\frac{u}{\varepsilon\sqrt{d}\delta_0^{-1}\|f\|_{L^d(\Omega)}},
\]
so $\tilde{u}$ solves a similar equation as in \eqref{1} with 
\[
\tilde{f}:=\frac{f}{\varepsilon\sqrt{d}\delta_0^{-1}\|f\|_{L^d(\Omega)}},
\]
which satisfies $\|\tilde{f}\|_{L^d} \leq r^{-1}\delta_0$
and $\tilde{f}$ satisfies the hypothesis of the previous lemma in $B_{6\varepsilon\sqrt{d}}(x_i)$, and as a consequence we get
\[
|\underline{A}_{t}(\tilde{u},\Omega)\cap Q_{\varepsilon}(x_i)|\leq Ct^{-\mu}\varepsilon^{-2\mu}|Q_{\varepsilon}(x_i)|=C\varepsilon^dt^{-\mu}\varepsilon^{-2\mu}.
\]
Now, denoting $N=N(\Omega',\varepsilon)$ as the number of cubes that covers $\Omega'$ we have for $K = \dfrac{1}{\varepsilon\sqrt{d}\delta_0^{-1}\|f\|_{L^d(\Omega)}}$
\begin{align*}
|\underline{A}_{t/K}({u},\Omega)\cap \Omega'| & = |\underline{A}_{t}(\tilde{u},\Omega)\cap \Omega'|\\
&\leq \left|\bigcup_{i=1}^{N} \underline{A}_{t}(\tilde{u},\Omega)\cap Q_{\varepsilon}(x_i)\right|\\
&\leq \sum_{i=1}^{N}Ct^{-\mu}\varepsilon^{-2\mu}|Q_{\varepsilon}(x_i)|=CN\varepsilon^dt^{-\mu}\varepsilon^{-2\mu}\\
&\leq
Ct^{-\mu}\varepsilon^{-2\mu}, 
\end{align*}
where in the first equality we have used Lemma \ref{GM properties} ii) and in the last inequality we have used that $N\varepsilon^d\leq C(d)|\Omega|$. This finishes the proof.
\end{proof}

The next lemma gives our first estimate of $|G_t(u, \Omega)|$. In what follows $\Omega$ is a bounded domain such that $B^+_{14\sqrt{d}} \subset \Omega \subset \R^d_+$.

\begin{lemma} \label{lemma3}
Let $u\in C(\Omega)$ be a viscosity solution to \eqref{1} in $B^{+}_{12\sqrt{d}}$ and suppose that A\ref{A-ellip} and A\ref{A-sourceterm} hold true. There exist universal constants $t>1$ and $\sigma\in (0,1)$ such that if $\|f\|_{L^p(B^{+}_{12\sqrt{d}})}\leq 1$, then
\[
|\underline{G}_t(u,\Omega)\cap\left((Q_1^{d-1}\times(0,1))+x_0\right)|\geq 1-\sigma,
\]
for any $x_0\in B_{9\sqrt{d}}\cap\{x_d\geq 0\}$.
\end{lemma}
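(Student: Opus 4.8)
The plan is to leverage the interior estimate of Lemma \ref{lemma2} together with the flat boundary condition $u = 0$ on $\{x_d = 0\}$ to control the measure of the complement set $\underline{A}_t(u,\Omega)$ inside the half-cube $(Q_1^{d-1}\times(0,1))+x_0$. The key observation is that near the flat boundary, the zero Dirichlet data forces the solution to be well-approximated by an affine function (in fact, by a multiple of $x_d$), so that a convex $C^{1,\alpha}$-cone can be made to touch $u$ from above at most boundary-adjacent points; the only points that can escape are either genuinely interior ``bad'' points (controlled by Lemma \ref{lemma2}) or points in a thin strip near $\{x_d=0\}$ whose measure we must bound directly.

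First I would split the half-cube $R_{x_0} := (Q_1^{d-1}\times(0,1))+x_0$ into a boundary layer $R_{x_0} \cap \{x_d < \rho\}$ and an interior region $R_{x_0}\cap\{x_d \geq \rho\}$ for a small universal $\rho$ to be fixed. On the interior region, the distance to $\partial B^+_{12\sqrt d}$ is bounded below by a universal constant (using $x_0 \in B_{9\sqrt d}$ so that $R_{x_0} \Subset B^+_{12\sqrt d}$ away from the flat face), hence Lemma \ref{lemma2} applied with $\Omega' = R_{x_0}\cap\{x_d\geq\rho\}$ gives $|\underline{A}_t(u,\Omega)\cap\{x_d\geq\rho\}| \leq C(1+\|f\|_{L^p})^\mu t^{-\mu} \leq C t^{-\mu}$, which is $\leq \sigma/2$ once $t$ is chosen large (universally). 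For the boundary layer, its Lebesgue measure is simply $\rho \cdot |Q_1^{d-1}| = \rho$, so choosing $\rho \leq \sigma/2$ makes this contribution harmless. Adding these, $|\underline{A}_t(u,\Omega)\cap R_{x_0}| \leq \sigma$, which is exactly the complement of the claimed bound on $\underline{G}_t$.

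The subtlety I expect to be the main obstacle is making sure the split genuinely works: I need to confirm that points in the interior region $\{x_d \geq \rho\}$ that are touched from above by a $C^{1,\alpha}$-cone of opening $t$ \emph{relative to the larger domain} $\Omega$ (rather than only relative to a ball $B_{\tau_0}$) are handled with the correct $\tau_0$, i.e., that the localization radius $\tau_0 < \mathrm{diam}/5$ in the definition of $\underline{G}_t$ is compatible with the geometry here; this is a matter of bookkeeping the constants so that $B_{\tau_0}(z) \subset \Omega$ for $z$ in the interior region, and is precisely why the domain $\Omega$ was chosen with the generous inclusion $B^+_{14\sqrt d}\subset\Omega$. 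A second minor point is verifying the hypotheses of Lemma \ref{lemma2}: $u$ is a normalized viscosity solution (hence supersolution) and $\|f\|_{L^p(B^+_{12\sqrt d})}\leq 1$ is assumed, so after possibly re-normalizing one may assume $u$ normalized; the constant $C$ in Lemma \ref{lemma2} depends on $\mathrm{dist}(\Omega',\partial\Omega)$, which is why fixing $\rho$ universally first is essential before invoking it. Once $t$ and $\sigma$ are pinned down in this order — $\rho$ from the boundary-layer measure, then $t$ from the interior decay — the estimate follows.
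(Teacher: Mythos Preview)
Your proposal is correct and follows essentially the same strategy as the paper: split the half-cube into a thin boundary layer (bounded by its raw Lebesgue measure) and an interior piece where Lemma~\ref{lemma2} applies, then choose the layer thickness and the opening $t$ in that order. One minor remark: the zero Dirichlet condition you invoke as motivation is not actually used in either your argument or the paper's --- the boundary strip is simply discarded by measure --- and the paper organizes the split by casing on whether $x_{0,d} \geq \sigma/2$ (effectively taking $\rho = \sigma/2$), but this difference is purely cosmetic.
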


\begin{proof}
Fix $0 < \sigma < 1$ and denote $x_0 = (x_0', x_{0,d})$. We split the proof into two cases. First, consider the case where $x_{0,d} \geq \sigma/2$, in this case, we can apply directly Lemma \ref{lemma2} to obtain the result. The complementary case where $x_{0,d} < \sigma/2$, we observe that $(Q_1^{d-1} \times (0,1)) + x_0$
is a subset of 
\[
 \left( \left(Q_1^{d-1} \times (x_{0,d}, \frac{\sigma}{2}]\right) + (x_0', 0) \right) 
\cup 
\left( \left(Q_1^{d-1} \times (\frac{\sigma}{2}, x_{0,d} + 1)\right) + (x_0', 0) \right).
\]
Now, using this fact and applying Lemma \ref{lemma2} again, we deduce that 
\begin{align*}
&\left|\underline{A}_t(u, \Omega) \cap \left( (Q_1^{d-1} \times (0,1)) + x_0 \right)\right|\\
&\leq \left| \left( \left(Q_1^{d-1} \times (x_{0,d}, \frac{\sigma}{2}]\right) + (x_0', 0) \right) \right| 
+ \left|\underline{A}_t(u, \Omega) \cap \left( (Q_1^{d-1} \times (\frac{\sigma}{2}, x_{0,d} + 1)) + (x_0', 0) \right)\right| \\
&\leq \frac{\sigma}{2}+C(d, \Omega, \lambda, \Lambda, \sigma)t^{-\mu}\\
&\leq \sigma,
\end{align*}
where the last inequality is obtained  choosing $t \geq (2C(d, \Omega, \lambda, \Lambda, \sigma)/\sigma)^{\frac{1}{\mu}}$. To finish the proof, notice that
\[
\left|\left(Q_1^{d-1} \times (0,1)\right) + x_0\right|\leq \left|\underline{G}_{t}(u,\Omega)\cap\left(\left(Q_1^{d-1} \times (0,1)\right) + x_0\right)\right|+\left|\underline{A}_{t}(u,\Omega)\cap\left(\left(Q_1^{d-1} \times (0,1)\right) + x_0\right)\right|
\]
which implies the desired result.
\end{proof}

\begin{lemma}\label{lemma4}
Let $u\in C(\Omega)$ be a viscosity solution to \eqref{1} in $B_{12\sqrt{d}}^+$ and suppose that A\ref{A-ellip} and A\ref{A-sourceterm} are in force. Assume further that
\[
G_1(u,\Omega)\cap\left(\left(Q_2^{d-1}\times(0,2)\right)+\tilde{x}_0\right)\neq \emptyset
\]
for some $\Tilde{x}_0\in B_{9\sqrt{d}}\cap\{x_d\geq 0\}$. There exist universal constants $M>1$ and $0<\sigma<1$, such that if
\[
\|f\|_{L^d(B_{12\sqrt{d}}^+)}\leq 1, 
\]
then, for any $x_0\in B_{9\sqrt{d}}\cap\{x_d \geq 0\}$
\[
\left|G_M(u,\Omega)\cap\left(\left(Q_1^{d-1}\times(0,1)\right)+x_0\right)\right|\geq 1-\sigma.
\]
\end{lemma}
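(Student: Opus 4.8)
The plan is to use the hypothesis to reduce to a normalised solution, apply the one-sided measure estimate of Lemma~\ref{lemma3} to it, and then apply the same estimate to the ``conjugate'' solution $-u$ in order to upgrade the bound on $\underline G$ to a bound on $G=\underline G\cap\overline G$.

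First I would fix a point $y_0\in G_1(u,\Omega)\cap\big((Q_2^{d-1}\times(0,2))+\tilde x_0\big)$ and use the two cones touching $u$ there. Since $y_0\in\underline G_1\cap\overline G_1$, there are a convex $C^{1,\alpha}$-cone $\psi_+=\ell_++\tfrac12|\cdot-y_0|^{1+\alpha}$ lying above $u$ and a concave one $\psi_-=\ell_--\tfrac12|\cdot-y_0|^{1+\alpha}$ lying below $u$ in $B_{\tau_0}(y_0)$, both equal to $u$ at $y_0$. Subtracting the two one-sided inequalities gives $(\ell_--\ell_+)(x)\le|x-y_0|^{1+\alpha}$ on $B_{\tau_0}(y_0)$; as $\ell_--\ell_+$ is affine and vanishes at $y_0$, testing along the segment from $y_0$ in the direction of $\nabla(\ell_--\ell_+)$ and letting the length tend to $0$ forces $\nabla(\ell_--\ell_+)=0$, hence $\ell_+=\ell_-=:L$, and so $|u(x)-L(x)|\le\tfrac12|x-y_0|^{1+\alpha}$ for $x\in B_{\tau_0}(y_0)$. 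I would then work with $v:=u-L$. Adding an affine function changes neither $D^2u$ nor the family of admissible $C^{1,\alpha}$-cones, so $v$ is still a viscosity solution of $F(D^2v,x)=f$ in $B^+_{12\sqrt d}$ and $\underline G_M(v,\Omega)=\underline G_M(u,\Omega)$, $\overline G_M(v,\Omega)=\overline G_M(u,\Omega)$, $G_M(v,\Omega)=G_M(u,\Omega)$ for every $M>0$; moreover $\|v\|_{L^\infty(B^+_{12\sqrt d})}\le C_\ast$ for some universal $C_\ast\ge1$.

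Next I would apply Lemma~\ref{lemma3} twice. The function $v/C_\ast$ satisfies $\|v/C_\ast\|_{L^\infty}\le1$ and solves a $(\lambda,\Lambda)$-elliptic equation with right-hand side $f/C_\ast$ of $L^p$-norm $\le1$, so Lemma~\ref{lemma3} provides universal $t>1$ and $\sigma\in(0,1)$ with
\[
\big|\,\underline G_t(v/C_\ast,\Omega)\cap\big((Q_1^{d-1}\times(0,1))+x_0\big)\,\big|\ge 1-\sigma
\]
for every $x_0\in B_{9\sqrt d}\cap\{x_d\ge0\}$; by Lemma~\ref{GM properties}(ii) this reads $\big|\,\underline G_{C_\ast t}(v,\Omega)\cap\big((Q_1^{d-1}\times(0,1))+x_0\big)\,\big|\ge1-\sigma$. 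On the other hand $w:=-v$ solves $\widetilde F(D^2w,x):=-F(-D^2w,x)=-f$, which is again $(\lambda,\Lambda)$-elliptic with source of the same $L^p$-norm, so Lemma~\ref{lemma3} applied to $w/C_\ast$, together with $\underline G_M(-v)=\overline G_M(v)$, yields $\big|\,\overline G_{C_\ast t}(v,\Omega)\cap\big((Q_1^{d-1}\times(0,1))+x_0\big)\,\big|\ge1-\sigma$. Since $G_M=\underline G_M\cap\overline G_M$, taking $M:=C_\ast t>1$ gives $\big|\,G_M(v,\Omega)\cap\big((Q_1^{d-1}\times(0,1))+x_0\big)\,\big|\ge1-2\sigma$, and $G_M(v,\Omega)=G_M(u,\Omega)$ is the assertion, after renaming $2\sigma$ as $\sigma$ (so one picks the $\sigma$ of Lemma~\ref{lemma3} below $1/2$).

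The step I expect to be delicate is the uniform bound $\|v\|_{L^\infty(B^+_{12\sqrt d})}\le C_\ast$ with a \emph{universal} $C_\ast$: from the two touching cones one gets for free only $\|u-L\|_{L^\infty(B_{\tau_0}(y_0))}\le\tfrac12\tau_0^{1+\alpha}$, so one still has to control the affine function $L$ itself over the whole half-ball, i.e.\ control $u(y_0)$ and $\nabla L$. This is precisely the point at which the hypothesis $G_1(u,\Omega)\cap(\,\cdots\,)\ne\emptyset$ is really used, and where the fixed position of $y_0$ inside $(Q_2^{d-1}\times(0,2))+\tilde x_0$, the choice of $\tau_0$ relative to the region carrying the cubes $(Q_1^{d-1}\times(0,1))+x_0$, and the scaling normalisation of Remark~\ref{rem_scaling} all enter. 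Everything afterwards is bookkeeping with the scaling identities of Lemma~\ref{GM properties} and the two applications of Lemma~\ref{lemma3}, one to $v$ and one to the conjugate solution.
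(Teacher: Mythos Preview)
Your route is the paper's: pick a point $x_1\in G_1(u,\Omega)$ in the shifted cube, subtract the affine part of the touching cone, divide by a universal constant to normalise, and invoke Lemma~\ref{lemma3}. Your two applications---once to $v$ and once to the conjugate solution $-v$---are just the explicit unfolding of the passage from $\underline G_t$ to $G_t$ that the paper takes for granted when $u$ is a solution rather than merely a supersolution; and your observation $\ell_+=\ell_-$ is a clean justification of a point the paper simply asserts.

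The step you flag as delicate is, in the paper's argument, immediate: the proof takes the two cone inequalities to hold \emph{for all $x\in\Omega$} (this is how it is written, notwithstanding the $B_{\tau_0}(x_0)$ in Definition~2.3; the global-touching convention is the intended one). With $\ell_+=\ell_-=:L$ one then has $|u(x)-L(x)|\le\tfrac12|x-y_0|^{1+\alpha}$ on all of $\Omega\supset B^+_{12\sqrt d}$. Since $y_0$ sits in the fixed region $(Q_2^{d-1}\times(0,2))+\tilde x_0$ with $\tilde x_0\in B_{9\sqrt d}$, the distance $|x-y_0|$ is universally bounded on $B^+_{12\sqrt d}$, and $\|u-L\|_{L^\infty(B^+_{12\sqrt d})}\le C(d,\alpha)$ follows at once---no separate control of $u(y_0)$ or $\nabla L$ is needed. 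The same global inequality on $\Omega\setminus B^+_{12\sqrt d}$ is what the paper uses, after dividing by $C$, to get $-|x|^{1+\alpha}\le v\le|x|^{1+\alpha}$ there and then pass from $G_M(v,B^+_{12\sqrt d})$ to $G_N(v,\Omega)$ by monotonicity, a bookkeeping step you should also record.
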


\begin{proof}
Let $x_1 \in G_1(u, \Omega) \cap \left( (Q_2^{d-1} \times (0,2)) + \tilde{x}_0 \right)$, then there exist $C^{1,\alpha}$-cones such that
\[
\ell(x) - \frac{1}{2} |x - x_1|^{1+\alpha} \leq u(x) \leq L(x) + \frac{1}{2} |x - x_1|^{1+\alpha}
\]
for all $x \in \Omega$, where $\ell$ is an affine function. Now, define
\[
v(x) := \frac{u(x) - L(x)}{C},
\]
where $C := C(\alpha, d)$ is chosen sufficiently large such that $\|v\|_{L^{\infty}(B_{12\sqrt{d}}^+)} \leq 1$ and

\[
- |x|^{1+\alpha} \leq v(x) \leq |x|^{1+\alpha} \quad \text{in } \Omega \setminus B_{12\sqrt{d}}^+.
\]

Notice that $v$ is a viscosity solution to
\[
\tilde{F}(D^2 v, x) = \tilde{f}(x) \quad \text{in } B_{12\sqrt{d}}^+,
\]
where
\[
\tilde{F}(D^2 v, x) := \frac{1}{C} F(C D^2 v, x), \quad \text{and} \quad \tilde{f}(x) := \frac{1}{C} f(x).
\]
Now, by applying Lemma \ref{lemma3} to $v$, we guarantee the existence of universal constants $M > 1$ and $0 < \sigma < 1$ such that
\[
\left| G_M(v, B_{12\sqrt{d}}^+) \cap \left( \left(Q_1^{d-1} \times (0,1)\right) + x_0 \right) \right| \geq 1 - \sigma
\]
for any $x_0 \in B_{9\sqrt{d}} \cap \{ x_d \geq 0 \}$. 

Using these estimates and the monotonicity property of the sets $G_t$, we conclude that

\[
\left| G_N(v, \Omega) \cap \left( \left(Q_1^{d-1} \times (0,1)\right) + x_0 \right) \right| \geq \left| G_M(v, B_{12\sqrt{d}}^+) \cap \left( \left(Q_1^{d-1} \times (0,1)\right) + x_0 \right) \right| \geq 1 - \sigma,
\]
for sufficiently large $N \geq M$. We finish the proof by observing that from the definition of $v$ and Lemma \ref{GM properties} we have
\[
G_M(v, \Omega) = G_{MC}(u, \Omega).
\]
\end{proof}

\begin{lemma}\label{lemma 5}
Let $u\in C(\Omega)$ be a normalized viscosity solution to \eqref{1} in $B_{12\sqrt{d}}^+$, and suppose that A\ref{A-ellip} and A\ref{A-sourceterm} hold with $\|f\|_{L^{p}(B_{12\sqrt{d}}^+)}\leq 1$. Extend $f$ by zero outside $B_{12\sqrt{d}}^+$ and define
\begin{align*}
A&:=A_{M^{k+1}}(u,\Omega)\cap\left(Q_1^{d-1}\times(0,1)\right)\\
B&:=\left(A_{M^{k}}(u,\Omega)\cap\left(Q_1^{d-1}\times(0,1)\right)\right)\cup\{x\in Q_1^{d-1}\times(0,1); M(f^d)\geq(C_0M^k)^d\}
\end{align*}
for $k\in\mathbb{N}_0$. Then $|A|\leq\sigma|B|$, where $C_0=C_0(d)$, $M>1$ and $\sigma\in(0,1)$ are universal constants.
\end{lemma}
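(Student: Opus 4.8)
The plan is to verify the hypotheses of the Calderón–Zygmund type Lemma \ref{lem_calzyg} for the sets $A$ and $B$ inside the cube $Q_1^{d-1}\times(0,1)$, which we think of as (a translate/rescaling of) the unit cube $Q_1$. First I would record the trivial inclusion $A\subset B$: indeed, by Lemma \ref{GM properties} i), $A_{M^{k+1}}(u,\Omega)\subset A_{M^k}(u,\Omega)$, so the first piece of $A$ is contained in the first piece of $B$. Next I would fix $\sigma\in(0,1)$ to be the universal constant coming from Lemma \ref{lemma4} (shrinking it if necessary), and check the smallness condition $|A|\le\sigma<1$; this follows from Lemma \ref{lemma2} (equivalently the $t^{-\mu}$ decay of $|A_t|$) by taking $k$ large, but for small $k$ one absorbs the finitely many cases into the universal constant $M$, so that $|A_{M^{k+1}}|$ is automatically small for every $k\in\mathbb N_0$.

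The heart of the argument is the dyadic dichotomy: let $Q\subset Q_1^{d-1}\times(0,1)$ be a dyadic cube with $|A\cap Q|>\sigma|Q|$, and let $\tilde Q$ be its predecessor; I must show $\tilde Q\subset B$. Arguing by contraposition, suppose $\tilde Q\not\subset B$. Then there is a point $y\in\tilde Q$ with $y\notin B$, which means simultaneously (a) $y\in G_{M^k}(u,\Omega)$, i.e. there is a $C^{1,\alpha}$-cone of opening $M^k$ touching $u$ from both sides at $y$, and (b) $M(f^d)(y)<(C_0M^k)^d$. Condition (a) tells us that $G_1(v,\Omega)\cap(\text{dilate of }\tilde Q)\ne\emptyset$ for the renormalized function $v$ obtained by subtracting the touching cone's affine part and dividing by $M^k$ (using Lemma \ref{GM properties} ii)); condition (b), after the same renormalization, gives the bound $\|f_v\|_{L^d}\le 1$ on the relevant scaled ball needed to apply Lemma \ref{lemma4}. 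A geometric observation is needed here: the predecessor $\tilde Q$ of a dyadic cube in $Q_1^{d-1}\times(0,1)$ is contained in a translate of $Q_2^{d-1}\times(0,2)$ by a point of $B_{9\sqrt d}\cap\{x_d\ge0\}$, and rescaling $v$ by the side length of $\tilde Q$ lands us in the configuration of Lemma \ref{lemma4} on $B_{12\sqrt d}^+$. Applying that lemma yields $|G_{MM^k}(v,\cdot)\cap Q|\ge(1-\sigma)|Q|$ after scaling back, i.e. $|G_{M^{k+1}}(u,\Omega)\cap Q|\ge(1-\sigma)|Q|$, hence $|A_{M^{k+1}}(u,\Omega)\cap Q|\le\sigma|Q|$, contradicting $|A\cap Q|>\sigma|Q|$. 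Therefore $\tilde Q\subset B$, and Lemma \ref{lem_calzyg} gives $|A|\le\sigma|B|$.

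The main obstacle I anticipate is bookkeeping the scalings so that everything stays universal: one must track how subtracting the touching cone, dividing by $M^k$, and dilating by the dyadic side length $2^{-j}$ transform the equation $F(D^2u,x)=f$ into $\tilde F(D^2v,x)=\tilde f$ with the same ellipticity constants (this is the scaling of Remark \ref{rem_scaling}), and simultaneously transform the Maximal-function bound $M(f^d)(y)<(C_0M^k)^d$ into the normalization $\|\tilde f\|_{L^d}\le 1$ on the appropriate ball — this is exactly where the constant $C_0=C_0(d)$ is chosen, via Proposition \ref{Maximal function} and the fact that $M(f^d)(y)$ controls averages of $f^d$ over all cubes containing $y$, in particular over the ball $B_{12\sqrt d}$ of the rescaled picture. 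A secondary point requiring care is that the cones of opening $M^k$ touch $u$ only within a fixed radius $\tau_0$; one has to make sure the rescaled radius still covers $B_{12\sqrt d}^+$, which is arranged by choosing $\tau_0$ appropriately at the outset (it is a fixed parameter in the definition of $G_M$) or by noting the decay of $|A_t|$ already incorporates this. Once these normalizations are pinned down, the rest is a direct invocation of Lemmas \ref{lemma4}, \ref{lem_calzyg} and \ref{GM properties}.
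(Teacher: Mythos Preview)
Your proposal is correct and follows essentially the same approach as the paper: verify $A\subset B$ and $|A|\le\sigma$, then run the Calder\'on--Zygmund dichotomy by assuming $\tilde Q\not\subset B$, extracting a point $x_1\in\tilde Q\cap G_{M^k}(u,\Omega)$ with $M(f^d)(x_1)<(C_0M^k)^d$, rescaling by the dyadic side length and dividing by $M^k$ so that Lemma~\ref{lemma4} applies, and deriving a contradiction. Two minor remarks: the paper obtains $|A|\le\sigma$ directly from Lemma~\ref{lemma3} (no ``finitely many cases'' argument is needed, since $M$ is chosen to be the $t$ in that lemma and $A_{M^{k+1}}\subset A_M$); and the paper performs the rescaling $\tilde u(y)=\tfrac{2^{2i}}{M^k}u(T(y))$ without subtracting the affine part, deferring that step to the interior of Lemma~\ref{lemma4} --- your organization is equivalent.
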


\begin{proof}
%We will use a Calderón-Zygmund cube decomposition argument, Lemma \ref{lem_calzyg}. 
First, observe that since $M^{k+1} \geq M^k$ for $M > 1$ and $k \in \mathbb{N}_0$, we have the inclusion $A_{M^{k+1}}(u, \Omega) \subset A_{M^k}(u, \Omega)$. As a result, $A \subset B$, and both are subsets of $\left(Q_1^{d-1} \times (0, 1)\right)$. Applying Lemma \ref{lemma3}, we obtain $|A| \leq \sigma < 1$. 

In order to use the Calderón-Zygmund cube decomposition argument, we need to show that for any dyadic cube $Q$ for which $|A \cap Q| > \sigma |Q|$, the set $\tilde{Q} \subset B$. Assume that for some $i \geq 1$, the cube 
\[
Q = \left(Q_{\frac{1}{2^i}}^{d-1} \times \left(0, \frac{1}{2^i}\right)\right) + x_0
\]
is a dyadic cube with the predecessor 
\[
\tilde{Q} = \left(Q_{\frac{1}{2^{i-1}}}^{d-1} \times \left(0, \frac{1}{2^{i-1}}\right)\right) + \tilde{x}_0.
\] 
Suppose that $Q$ satisfies
\begin{equation} \label{AQ}
    |A \cap Q| = |A_{M^{k+1}}(u, \Omega) \cap Q| > \sigma |Q|,
\end{equation}
but $\tilde{Q} \nsubseteq B$. This implies that there exists a point $x_1 \in \tilde{Q} \setminus B, $ which means
\begin{equation} \label{x_1}
x_1 \in \tilde{Q} \cap G_{M^k}(u, \Omega) \quad \mbox{and}\quad M(f^d)(x_1) < (C_0 M^k)^d.
\end{equation}
Now, consider the transformation $T(y) := \tilde{x}_0 + 2^{-i}y$ and define the rescaled functions 
\[
\tilde{u}(y) := \frac{2^{2i}}{M^k}u(T(y)),
\]
and $\tilde{\Omega} = T^{-1}(\Omega)$. Since $i \geq 1$ and $\tilde{Q} \subset \left(Q_1^{d-1} \times (0,1)\right)$, we have the inclusion $B_{12\sqrt{d}/2^i}^+(\tilde{x}_0) \subset B_{12\sqrt{d}}^+$, which shows that $\tilde{u}$ is a viscosity solution of
\[
\tilde{F}(D^2 \tilde{u}) = \tilde{f} \quad \text{in} \quad B_{12\sqrt{d}}^+,
\]
where $\tilde{f}(y) := \frac{1}{M^k}f(T(y))$. Moreover, since $|\tilde{x}_0 - x_1|_{\infty} <2^{1-i}$, it follows that $B_{12\sqrt{d}/2^i}^+(\tilde{x}_0) \subset Q_{28\sqrt{d}/2^i}(x_1)$. Thus, from \eqref{x_1}, we deduce that
\begin{align*}
\|\tilde{f}\|^d_{L^d(B_{12\sqrt{d}}^+)} &=\frac{2^i{d}}{M^{kd}}\int_{B_{12\sqrt{d}}^+}|f(x)|^d dx\\
&\leq \frac{C(d)}{M^{kd}} \left(\int_{Q_{28\sqrt{d}/2^i}(x_1)} |f(x)|^d dx\right)^{\frac{1}{d}}\\
&\leq C(d) C_0\\
&\leq 1
\end{align*}
for a sufficiently small constant $C_0 > 0$. Furthermore, from \eqref{x_1}, we can also conclude that
\[
G_1(\tilde{u}, \tilde{\Omega}) \cap \left(Q_2^{d-1} \times (0, 2)\right) \neq \emptyset,
\]
which shows that the conditions of Lemma \ref{lemma4} hold in $\tilde{\Omega}$. 

Since $x_{0,n} \geq \tilde{x}_{0,n}$ and $|x_0 - \tilde{x}_0| \leq \sqrt{d}/2^i$, it follows that $2^i(x_0 - \tilde{x}_0) \in B_{9\sqrt{d}} \cap \{x_d \geq 0\}$. Applying Lemma \ref{lemma4}, we get
\[
\left| G_M(\tilde{u}, \tilde{\Omega}) \cap \left(\left(Q_1^{d-1} \times (0,1)\right) + 2^i(x_0 - \tilde{x}_0)\right)\right| \geq 1 - \sigma,
\]
which implies
\[
\left| G_{M^{k+1}}(u, \Omega) \cap Q \right| \geq (1 - \sigma) |Q|.
\]

This contradicts \eqref{AQ}, completing the proof.
\end{proof}

From Lemma \ref{lemma 5} we derive a power decay for $|A_t(u,\Omega)|$ at the boundary.

\begin{lemma}\label{lemma 6}
Let $u\in C(\Omega)$ be a normalized viscosity solution in $B_{12\sqrt{d}}^+$, and suppose that A\ref{A-ellip} and A\ref{A-sourceterm} are in force. There exist universal constants $C>0$ and $\mu>0$ such that if $\|f\|_{L^d(B^+_{12\sqrt{d}})} \leq 1$, then
\[
\left|A_t(u,\Omega)\cap\left(\left(Q_1^{d-1}\times(0,1)\right)+x_0\right)\right|\leq Ct^{-\mu}
\]
for any $x_0\in B_{9\sqrt{d}}\cap\{x_d\geq 0\}$.
\end{lemma}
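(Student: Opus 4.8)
The plan is to iterate Lemma \ref{lemma 5} to obtain a geometric decay for the measure of the sets $A_{M^k}(u,\Omega)$ inside the half-cube $(Q_1^{d-1}\times(0,1))+x_0$, and then convert the resulting sum into the desired power law in $t$. First I would fix $x_0\in B_{9\sqrt d}\cap\{x_d\ge0\}$, write $\alpha_k := |A_{M^k}(u,\Omega)\cap((Q_1^{d-1}\times(0,1))+x_0)|$, and (up to a translation, which is harmless since all the lemmas are stated for an arbitrary such $x_0$) assume $x_0=0$. Set $b_k := |\{x\in Q_1^{d-1}\times(0,1):\ M(f^d)(x)\ge (C_0M^k)^d\}|$. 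Lemma \ref{lemma 5}, via the Calder\'on--Zygmund lemma (Lemma \ref{lem_calzyg}) applied on the reference cube $Q_1^{d-1}\times(0,1)$, gives $\alpha_{k+1}\le \sigma(\alpha_k+b_k)$ for all $k\ge0$, with $\sigma\in(0,1)$ and $M>1$ universal.

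Iterating this recursion yields
\[
\alpha_k \le \sigma^k\alpha_0 + \sum_{j=0}^{k-1}\sigma^{k-j}b_j \le \sigma^k + \sum_{j=0}^{k-1}\sigma^{k-j}b_j,
\]
using $\alpha_0\le 1$. The point is to sum the weighted series $\sum_{k\ge1}M^{\mu k}\alpha_k$ for a suitable small $\mu>0$: choosing $\mu$ so that $M^{\mu}\sigma<1$, the $\sigma^k\alpha_0$ contribution is summable, and swapping the order of summation in the double sum gives $\sum_{k\ge1}M^{\mu k}\sum_{j<k}\sigma^{k-j}b_j\le C\sum_{j\ge0}M^{\mu j}b_j$. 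Now $b_j=\mu_{M(f^d)}((C_0M^j)^d)$ is the distribution function of $M(f^d)$ at height $(C_0 M^j)^d$; since $f\in L^p$ with $p>d$ (Assumption A\ref{A-sourceterm}) and $f$ is extended by zero, the maximal function $M(f^d)\in L^{p/d}(\R^d)$ by the strong $(p/d,p/d)$ bound in Proposition \ref{Maximal function} (note $p/d>1$). Applying Lemma \ref{lem_eqsob} with exponent $p/d$, constant $M^d$ in place of $M$, and threshold $\eta=C_0^d$, the series $\sum_{j\ge1}(M^d)^{(p/d)j}b_j$ converges; choosing $\mu\le p$ (and $\mu$ small enough that $M^\mu\sigma<1$) makes $M^{\mu j}\le (M^d)^{(p/d)j}$, so $\sum_j M^{\mu j}b_j\le C(1+\|M(f^d)\|_{L^{p/d}})\le C(1+\|f\|_{L^p(B^+_{12\sqrt d})}^d)\le C$ since $\|f\|_{L^p}\le1$. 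Hence $\sum_{k\ge1}M^{\mu k}\alpha_k\le C$ for a universal $C$.

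Finally I would convert this into the stated pointwise-in-$t$ bound. Given $t>1$, pick $k\ge1$ with $M^k\le t< M^{k+1}$; since $A_t\subset A_{M^k}$ (the sets $A_s$ are nested in $s$, cf. the monotonicity used in Lemma \ref{lemma 5}), we get
\[
\alpha(t):=\left|A_t(u,\Omega)\cap\left(\left(Q_1^{d-1}\times(0,1)\right)+x_0\right)\right|\le \alpha_k \le M^{-\mu k}\cdot M^{\mu k}\alpha_k \le M^{\mu}\,t^{-\mu}\sum_{j\ge1}M^{\mu j}\alpha_j\le C t^{-\mu},
\]
and for $0<t\le1$ the bound is trivial after enlarging $C$ (the left side is at most $1$). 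This gives the conclusion with universal $C$ and $\mu$.

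The main obstacle is the bookkeeping in the two-parameter summation: one must choose $\mu$ small enough that $M^\mu\sigma<1$ yet still be able to dominate $M^{\mu j}$ by the natural $L^{p/d}$-weight $(M^d)^{(p/d)j}$ coming from Lemma \ref{lem_eqsob} applied to $M(f^d)$, and one must check that the maximal-function term is controlled by $\|f\|_{L^p}$ with a universal constant — this is exactly where $p>d$ is used, so that $p/d>1$ and the strong-type maximal inequality of Proposition \ref{Maximal function} applies. A minor technical point is that Lemma \ref{lemma 5} is stated for the fixed cube $Q_1^{d-1}\times(0,1)$ (i.e. $x_0=0$); invoking it for a general $x_0\in B_{9\sqrt d}\cap\{x_d\ge0\}$ requires noting that the whole chain of Lemmas \ref{lemma3}--\ref{lemma 5} is translation-covariant in the tangential variables and that all the relevant half-balls remain inside $B^+_{12\sqrt d}$, which is guaranteed by the geometric constraints on $x_0$.
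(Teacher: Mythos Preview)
Your argument is correct and follows the same overall scheme as the paper---iterate Lemma~\ref{lemma 5} to obtain a recursion $\alpha_{k+1}\le\sigma(\alpha_k+b_k)$ and deduce power decay---but the details diverge in two places. First, to control the maximal-function term you invoke the strong $(p/d,p/d)$ bound from Proposition~\ref{Maximal function} together with Lemma~\ref{lem_eqsob} to bound $\sum_j M^{\mu j}b_j$, and then extract the pointwise decay of $\alpha_k$ from the finiteness of $\sum_k M^{\mu k}\alpha_k$; the paper instead uses only the weak $(1,1)$ maximal inequality and the hypothesis $\|f\|_{L^d}\le 1$ to obtain the explicit pointwise bound $b_j\le C M^{-jd}$, from which $\alpha_k\le (1+Ck)\max(\sigma,M^{-d})^k\le CM^{-\mu k}$ follows directly. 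The paper's route is more elementary (it does not need $p>d$ here), while yours foreshadows the summation argument of Proposition~\ref{eq th1}. Second, for general $x_0$ you reduce to $x_0=0$ by translation-covariance of Lemmas~\ref{lemma3}--\ref{lemma 5}, whereas the paper treats $x_0\neq 0$ by a separate covering argument appealing to the interior decay of Lemma~\ref{Lemma 1}; your reduction is legitimate once one checks that the dyadic subcubes of the shifted cube still have corners in the admissible range for Lemma~\ref{lemma4}, which may require a harmless adjustment of the constant $9\sqrt d$, as you correctly flag.
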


\begin{proof}
We will split the proof into two parts. Suppose first that  $x_0 = 0$ and define
\[
\alpha_k := \left| A_{M^k}(u, \Omega) \cap \left(Q_1^{d-1} \times (0, 1)\right) \right|, \quad
\beta_k := \left| \left\{x \in Q_1^{d-1} \times (0,1) : M(f^d)(x) \geq (C_0 M^k)^d \right\} \right|.
\]
Applying Lemma \ref{lemma 5}, we have the estimate
\[
\alpha_{k+1} \leq \sigma (\alpha_k + \beta_k).
\]
By iterating the inequality above we obtain
\[
\alpha_k \leq \sigma \alpha_{k-1} + \sigma \beta_{k-1} \leq \sigma^2 \alpha_{k-2} + \sigma^2 \beta_{k-2} + \sigma \beta_{k-1} \leq \dots \leq \sigma^k \alpha_0 + \sum_{j=0}^{k-1} \sigma^{k-j} \beta_j.
\]
Thus, we can conclude that
\[
\alpha_k \leq \sigma^k + \sum_{j=0}^{k-1} \sigma^{k-j} \beta_j.
\]
From Proposition \ref{Maximal function} we obtain
\[
\beta_j \leq C(d) (C_0 M^j)^{-d} \|f^d\|_{L^1(B^+_{12\sqrt{d}})} \leq C M^{-jd}.
\]
Therefore, we can estimate $\alpha_k$ as follows:
\[
\alpha_k \leq \sigma^k + C \sum_{j=0}^{k-1} \sigma^{k-j} M^{-dj} \leq \max(\sigma, M^{-d})^k + C \sum_{j=0}^{k-1} \max(\sigma, M^{-d})^{k-j} \max(\sigma, M^{-d})^j.
\]
Simplifying the expression above yields to
\[
\alpha_k \leq (1 + Ck) \max(\sigma, M^{-d})^k \leq CM^{-\mu k}.
\]
Finally, the first case is proved by choosing $\mu$ sufficiently small.

Now, suppose that $x_0 \neq 0$. Set
\[
\alpha_k := \left| A_k(u, \Omega) \cap \left(\left(Q_1^{d-1} \times (0, 1)\right) + x_0\right) \right|.
\]
Let $\varepsilon > 0$ be small enough so that $6 \varepsilon \sqrt{d} < \text{dist}\left( \left(Q_1^{d-1} \times (0, 1)\right) + x_0, \partial \Omega \right)$. We then can cover $\left(Q_1^{d-1} \times (0, 1)\right) + x_0$ using a finite collection of parallel cubes with side length $\varepsilon$, ensuring that these cubes are disjoint. Let $x_i$ denote the center of each cube $Q_{\varepsilon}(x_i)$. Applying Lemma \ref{Lemma 1} in the ball $B_{6\varepsilon\sqrt{d}}(x_i)$ for each center $x_i$, we deduce
\[
\alpha_k \leq \left|\bigcup_{i=1}^{N} A(u, \Omega) \cap Q_{\varepsilon}(x_i) \right| \leq C t^{-\mu} \varepsilon^{-2\mu}.
\]
This finishes the proof
\end{proof}

Now, combining the previous result with \eqref{eq_integtheta}, we obtain the first level of integrability for $\theta_{1+\alpha}$. 

\begin{corollary}[$L^\delta$-integrability of $\theta_{1+\alpha}$]
Let $u \in C(\overline{B}^+_1)$ be a normalized viscosity solution to
\begin{equation*}
\begin{cases}
    F(D^2u,x)=f(x) \quad &\text{in} \quad B^+_1\\
    u=0 \quad &\text{on} \quad \partial B^+_1.
\end{cases}
\end{equation*}
Suppose A\ref{A-ellip} and A\ref{A-sourceterm} are in force. Then $\theta_{1+\alpha} \in L^\delta(B_{1/2}^+)$, for $\delta \ll 1$ universal, and there exists a positive universal constant $C$ such that
\[
\int_{B^+_{1/2}}|\theta_{1+\alpha}(x)|^\delta dx \leq C.
\]
\end{corollary}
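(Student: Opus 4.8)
The plan is to deduce the $L^\delta$-integrability of $\theta_{1+\alpha}$ from the decay estimate on $|A_t(u,\Omega)|$ obtained in Lemma \ref{lemma 6}, together with the identity \eqref{eq_integtheta} relating the distribution function of $\theta$ to the measure of the sets $A_M$, and Lemma \ref{lem_eqsob} translating summability of the distribution function into $L^\delta$-integrability. First I would reduce to a setting in which Lemma \ref{lemma 6} applies: the hypotheses there require $u$ to be a normalized viscosity solution in $B^+_{12\sqrt d}$ with $\|f\|_{L^d(B^+_{12\sqrt d})}\le 1$, whereas here $u$ solves the equation in $B_1^+$. Using the scaling in Remark \ref{rem_scaling} (and a translation/dilation sending $B_1^+$ onto a ball containing $B^+_{12\sqrt d}$, at the cost of universal constants only), I may assume $u$ is normalized and $\|f\|_{L^d}$ is as small as needed; since $\Omega$ can be taken to be a fixed bounded domain with $B^+_{14\sqrt d}\subset\Omega\subset\R^d_+$, all constants produced remain universal. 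Note $\|f\|_{L^d}\le C\|f\|_{L^p}$ for $p>d$ by Hölder on a bounded domain, so Assumption \ref{A-sourceterm} indeed gives the required $L^d$-control.

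Next I would invoke Lemma \ref{lemma 6} with $x_0=0$, which gives $|A_t(u,\Omega)\cap(Q_1^{d-1}\times(0,1))|\le Ct^{-\mu}$ for universal $C,\mu>0$. Since (after the normalization) $B_{1/2}^+\subset Q_1^{d-1}\times(0,1)$ up to the same fixed change of variables, this yields $|A_t(u,B_{1/2}^+)|\le Ct^{-\mu}$ for all $t>0$. Combining with \eqref{eq_integtheta}, $\mu_\theta(t)\le|A_t(u,B_{1/2}^+)|\le Ct^{-\mu}$. Now pick any $M>1$ universal and estimate
\[
\sum_{k\ge 1}M^{\delta k}\mu_\theta(M^k)\le C\sum_{k\ge 1}M^{\delta k}M^{-\mu k}=C\sum_{k\ge 1}M^{(\delta-\mu)k},
\]
which converges provided $\delta<\mu$. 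Choosing $\delta=\mu/2$ (universal, and $\ll 1$), Lemma \ref{lem_eqsob} applied to $g=\theta_{1+\alpha}$ on $\Omega=B_{1/2}^+$ gives $\theta_{1+\alpha}\in L^\delta(B_{1/2}^+)$ together with the bound $\|\theta_{1+\alpha}\|^\delta_{L^\delta(B_{1/2}^+)}\le C(|B_{1/2}^+|+S)\le C$, where $S$ is the convergent sum above and $C$ is universal.

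The only genuinely delicate point is the bookkeeping in the first paragraph: one must check that the affine change of variables used to fit the hypotheses of Lemma \ref{lemma 6} (which is stated with the half-cube $Q_1^{d-1}\times(0,1)$ and the half-ball $B^+_{12\sqrt d}$) does not distort the set $B_{1/2}^+$ out of the region where the decay estimate is valid, and that the zero Dirichlet condition on the flat portion of $\partial B_1^+$ is preserved and matches the flat-boundary hypothesis implicit in Lemmas \ref{lemma3}--\ref{lemma 6}. This is routine but must be done carefully so that every constant stays universal; once it is in place, the conclusion follows immediately from \eqref{eq_integtheta} and the two cited lemmas as above. All remaining steps are direct applications of results already established in the excerpt.
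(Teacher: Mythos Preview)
Your proposal is correct and follows exactly the approach the paper intends: the corollary is stated immediately after Lemma \ref{lemma 6} with the one-line justification ``combining the previous result with \eqref{eq_integtheta}'', and your argument spells out precisely this combination via Lemma \ref{lem_eqsob}. The scaling reduction and the choice $\delta<\mu$ are the natural details the paper leaves implicit.
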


Our goal now is to accelerate the decay rate in Lemma \ref{lemma 6}, to improve the previous result. The next lemma is one of the main ingredients in such arguments.

\begin{proposition}[Approximation Lemma]\label{aproximationlemma}
Let $u\in C(\overline{B}_{14\sqrt{d}}^+)$ be a normalized viscosity solution to
\begin{equation*}
\begin{cases}
F(D^2u,x)=f(x)&\hspace{.2in}\mbox{in}\hspace{.2in}B_{14\sqrt{d}}^+\\
u=0&\hspace{.2in}\mbox{on}\hspace{.2in} B_{14\sqrt{d}}\cap\{x_d=0\}.
\end{cases}
\end{equation*}
Assume that A\ref{A-ellip}-A\ref{A-oscF} are in force. Given $\delta>0$, there exists $0<\varepsilon<\delta$ such that if 
\[
\|f\|_{L^d(B_{14\sqrt{d}}^+)}\leq \varepsilon,
\]
then we can find a function $h\in C^{1,\alpha_0}_{loc}(\overline{B}_{12\sqrt{d}}^+)$ satisfying
\[
\|u-h\|_{L^{\infty}(B_{12\sqrt{d}}^+)}\leq \delta.
\]
Moreover, there exists a universal constant $C=C(d,\lambda,\Lambda,\alpha_0) > 0$ such that
\[
\|h\|_{C^{1,\alpha_0}(\overline{B}_{12\sqrt{d}}^+)}\leq C.
\]
\end{proposition}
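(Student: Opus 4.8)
The plan is to prove the approximation lemma by a standard compactness/contradiction argument, combining it with the stability of viscosity solutions under uniform convergence and the known interior–boundary $C^{1,\alpha_0}$ estimate for the homogeneous equation with frozen coefficients.

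\textbf{Step 1: Set up the contradiction.} Suppose the conclusion fails. Then there exist $\delta_0 > 0$, a sequence of operators $F_n$ satisfying A\ref{A-ellip}--A\ref{A-oscF} with a common oscillation bound $\beta_0$, source terms $f_n$ with $\|f_n\|_{L^d(B_{14\sqrt d}^+)} \le 1/n$, and normalized viscosity solutions $u_n \in C(\overline{B}_{14\sqrt d}^+)$ of $F_n(D^2 u_n, x) = f_n$ in $B_{14\sqrt d}^+$ with $u_n = 0$ on $B_{14\sqrt d}\cap\{x_d=0\}$, such that $\|u_n - h\|_{L^\infty(B_{12\sqrt d}^+)} > \delta_0$ for every $h \in C^{1,\alpha_0}_{loc}(\overline{B}_{12\sqrt d}^+)$ with $\|h\|_{C^{1,\alpha_0}(\overline{B}_{12\sqrt d}^+)} \le C$, where $C$ is the universal constant from the homogeneous boundary regularity theory.

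\textbf{Step 2: Extract limits.} Since the $u_n$ are normalized, $\|u_n\|_{L^\infty} \le 1$, and the global Hölder estimate (Krylov--Safonov up to a flat boundary, valid under A\ref{A-ellip} and A\ref{A-sourceterm}, with zero boundary data) gives a uniform $C^{\alpha}(\overline{B}_{13\sqrt d}^+)$ bound. By Arzel\`a--Ascoli, along a subsequence $u_n \to u_\infty$ uniformly on $\overline{B}_{12\sqrt d}^+$. The oscillation condition A\ref{A-oscF} forces, after translating, the $F_n$ to be asymptotically $x$-independent on the relevant scale; more precisely one extracts a limiting constant-coefficient operator $F_\infty$ (uniformly elliptic, same constants) with $F_n \to F_\infty$ locally uniformly on $\mathcal S(d)$ in an appropriate averaged sense. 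By the stability theorem for $L^p$-viscosity solutions (uniform convergence of solutions plus convergence of operators and $\|f_n\|_{L^d}\to 0$), the limit $u_\infty$ is a viscosity solution of $F_\infty(D^2 u_\infty) = 0$ in $B_{12\sqrt d}^+$ with $u_\infty = 0$ on $B_{12\sqrt d}\cap\{x_d=0\}$.

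\textbf{Step 3: Apply homogeneous regularity and conclude.} For the constant-coefficient homogeneous equation with zero Dirichlet data on a flat boundary, the boundary $C^{1,\alpha_0}$ estimate (Silvestre--Sirakov / Krylov, using only uniform ellipticity) yields $u_\infty \in C^{1,\alpha_0}_{loc}(\overline{B}_{12\sqrt d}^+)$ with $\|u_\infty\|_{C^{1,\alpha_0}(\overline{B}_{12\sqrt d}^+)} \le C\|u_\infty\|_{L^\infty(B_{14\sqrt d}^+)} \le C$, the universal constant. Taking $h = u_\infty$ contradicts $\|u_n - h\|_{L^\infty} > \delta_0$ for $n$ large, since $u_n \to u_\infty$ uniformly. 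This proves the lemma, with $\varepsilon$ chosen so that $\varepsilon < \delta$ is also imposed directly. The main obstacle is Step 2: one must carefully invoke the correct stability result for $L^p$-viscosity solutions under simultaneous perturbation of the operator (controlled via $\beta_{F_n}$ and A\ref{A-oscF}) and vanishing right-hand side in $L^d$, and ensure the limiting operator is genuinely $x$-independent; this is where the smallness of $\beta_0$ and the $L^d$-averaged oscillation hypothesis are essential, and it is the technically delicate point, the Hölder compactness and the final application of homogeneous regularity being routine.
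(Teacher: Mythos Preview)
Your proposal is correct and follows essentially the same compactness/contradiction argument as the paper: assume failure, extract a uniformly convergent subsequence via H\"older estimates, pass to a limiting $(\lambda,\Lambda)$-elliptic operator $F_\infty$, invoke stability of viscosity solutions to get $F_\infty(D^2u_\infty)=0$, and then use the $C^{1,\alpha_0}$ regularity of the homogeneous problem to reach a contradiction by taking $h=u_\infty$. If anything, you are more explicit than the paper about the role of A\ref{A-oscF} in ensuring the limiting operator has frozen coefficients and about which stability theorem is being invoked; the paper treats these points rather tersely.
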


\begin{proof}
We will argue by contradiction. Suppose there exists $\delta_0>0$ and sequences $(F_n)_{n\in \mathbb{N}}$, $(u_n)_{n\in \mathbb{N}}$ and $(f_n)_{n\in \mathbb{N}}$, satisfying
\[
F_n \mbox{ is a } (\lambda,\Lambda)-\mbox{elliptic operator for each } n,
\]
\begin{equation}\label{7}
F_n(D^2u_n,x)=f_n(x) \hspace{0.2cm} \text{in}\hspace{0.2cm} B_{14\sqrt{d}}^+,
\end{equation}
\[
\|f\|_{L^d(B_{14\sqrt{d}}^+)}\leq\frac{1}{n}
\]
but
\begin{equation}\label{8}
    \|u-h\|_{L^{\infty}(B_{12\sqrt{d}}^+)}> \delta_0,
\end{equation}
for every $h \in  C^{1,\alpha_0}_{loc}(\overline{B}_{12\sqrt{d}}^+)$.

From the regularity available for \eqref{7}, we can find a function $u_{\infty}\in C_{loc}^{\frac{\beta}{2}}(B_{12\sqrt{d}}^+)$, for some $\beta \in (0,1)$, such that $u_n \to u_{\infty}$ locally uniformly in $B_{12\sqrt{d}}^+$. In addition, from the ellipticity of $F$ (assumption $A$\ref{A-ellip}), there exists a $(\lambda,\Lambda)$-elliptic operator $F_{\infty}$ such that $F_n \to F_{\infty}$. Now, stability results imply that $u_{\infty}$ solves
\[
F_{\infty}(D^2u_{\infty})=0 \hspace{0.2cm} \text{in}\hspace{0.2cm} B_{12\sqrt{d}}^+.
\]
Since $F_{\infty}$ is a $(\lambda,\Lambda)$-elliptic operator, we have $u_{\infty}\in C^{1,\alpha_0}(\overline{B}_{12\sqrt{d}}^+)$. By choosing $h\equiv u_{\infty}$, we get a contradiction with \eqref{8}.
\end{proof}

\begin{lemma}\label{lemma 7}
Let $0<\varepsilon_0< 1$, and $u\in C(\Omega)$ be a viscosity solution to
\begin{equation*}
\begin{cases}
F(D^2u,x)=f(x)&\hspace{.2in}\mbox{in}\hspace{.2in}B_{14\sqrt{d}}^+\\
u=0&\hspace{.2in}\mbox{on}\hspace{.2in} B_{14\sqrt{d}}\cap\{x_d=0\}.
\end{cases}
\end{equation*}
Suppose that A\ref{A-ellip}-A\ref{A-oscF} are in force and that 
\[
\|f\|_{L^p(B_{14\sqrt{d}}^+)} \leq\varepsilon
\]
If, 
\[
G_1(u,\Omega)\cap\left(\left(Q_2^{d-1}\times(0,2)\right)+\tilde{x}_0\right)\neq \emptyset,
\]
for some $\tilde{x}_0\in B_{9\sqrt{d}}\cap\{x_d\geq 0\}$, then there exists a universal constant $M$ such that
\[
\left|G_M(u,\Omega)\cap\left(\left(Q_1^{d-1}\times(0,1)\right)+x_0\right)\right|\geq 1-\varepsilon_0,
\]
where $x_0\in B_{9\sqrt{d}}\cap\{x_d \geq 0\}$.
\end{lemma}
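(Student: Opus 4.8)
The plan is to combine the Approximation Lemma (Proposition~\ref{aproximationlemma}) with the geometric information carried by the $C^{1,\alpha_0}$-regularity of the approximating function $h$, in order to produce the improved estimate with $\varepsilon_0$ in place of the fixed $\sigma$ from Lemma~\ref{lemma4}. First I would normalize as in Lemma~\ref{lemma4}: from the hypothesis $G_1(u,\Omega)\cap((Q_2^{d-1}\times(0,2))+\tilde x_0)\neq\emptyset$, pick a point $x_1$ in that set and affine functions $\ell,L$ so that
\[
\ell(x)-\tfrac12|x-x_1|^{1+\alpha}\le u(x)\le L(x)+\tfrac12|x-x_1|^{1+\alpha}\quad\text{in }\Omega,
\]
then replace $u$ by $v:=(u-L)/C$ with $C=C(\alpha,d)$ large, so that $\|v\|_{L^\infty(B_{12\sqrt d}^+)}\le 1$ and $v$ solves a rescaled equation with the same ellipticity constants and source term $\tilde f:=f/C$, which still satisfies the smallness hypothesis. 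This reduces everything to estimating $|G_M(v,\Omega)\cap((Q_1^{d-1}\times(0,1))+x_0)|$ from below by $1-\varepsilon_0$.

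**Next**, given $\varepsilon_0$, I would choose $\delta=\delta(\varepsilon_0)>0$ to be fixed below and apply Proposition~\ref{aproximationlemma} to $v$: provided $\|\tilde f\|_{L^d}\le\varepsilon$ (which, shrinking $\varepsilon$ if necessary and using $p>d$ so $\|\tilde f\|_{L^d}\le C\|\tilde f\|_{L^p}$, follows from the hypothesis), there is $h\in C^{1,\alpha_0}_{loc}(\overline B_{12\sqrt d}^+)$ with $\|v-h\|_{L^\infty(B_{12\sqrt d}^+)}\le\delta$ and $\|h\|_{C^{1,\alpha_0}(\overline B_{12\sqrt d}^+)}\le C_h$ universal. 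Since $h$ is $C^{1,\alpha_0}$ with universal norm, at every point $y_0\in\overline B_{12\sqrt d}^+$ its graph is touched from above and below by $C^{1,\alpha_0}$-cones of a universal opening $M_0$ (this is exactly the content of the first-order Taylor expansion with Hölder remainder: $|h(y)-h(y_0)-Dh(y_0)\cdot(y-y_0)|\le C_h|y-y_0|^{1+\alpha_0}$); hence every point of the half-cube lies in $G_{M_0}(h,\cdot)$ for the larger exponent $\alpha_0$. The $L^\infty$-closeness $\|v-h\|\le\delta$ then lets one convert these cone-touchings for $h$ into cone-touchings for $v$ of a slightly larger opening $M$, but only on a subregion: shifting a touching cone for $h$ up (resp.\ down) by $\delta$ produces a function lying above (resp.\ below) $v$, and a standard argument (translating vertically until first contact, then controlling where the contact can occur) shows that the set of points of $(Q_1^{d-1}\times(0,1))+x_0$ \emph{not} in $G_M(v,\Omega)$ has measure controlled by a quantity that tends to $0$ as $\delta\to0$; alternatively, one invokes the already-proved decay for $A_M$ (Lemma~\ref{lemma 6} applied to $v-h$, or a direct covering argument) to bound the bad set. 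Choosing $\delta$ small enough that this bad-set measure is $<\varepsilon_0$, and taking $M$ the corresponding universal opening, gives $|G_M(v,\Omega)\cap((Q_1^{d-1}\times(0,1))+x_0)|\ge 1-\varepsilon_0$. Finally, undoing the normalization via Lemma~\ref{GM properties} (as in the last line of the proof of Lemma~\ref{lemma4}, $G_M(v,\Omega)=G_{MC}(u,\Omega)$) and relabeling $MC$ as the universal constant $M$ yields the claim.

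**The main obstacle** I expect is the passage from "$h$ has all its points in $G_{M_0}(h,\cdot)$'' to "most points of the half-cube are in $G_M(v,\cdot)$'', i.e.\ the quantitative transfer of the cone-touching property across an $L^\infty$-perturbation of size $\delta$. The subtlety is that a cone touching $h$ from above, raised by $\delta$, lies above $v$ but no longer touches it; one must slide it down until first contact and argue that the contact point, together with a possibly slightly enlarged opening, still witnesses membership in $G_M(v,\cdot)$ — except on a small set near where the enlarged cone fails to stay strictly above $v$ in the whole ball $B_{\tau_0}$. Controlling that exceptional set uniformly (independently of $x_0$ and of the solution) is precisely where one needs either a compactness/contradiction argument in the spirit of Proposition~\ref{aproximationlemma} or a second application of the measure estimates already established. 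I would handle it by the contradiction route: if the conclusion failed for every universal $M$, one would extract sequences $v_n\to v_\infty$ with $v_\infty$ solving $F_\infty(D^2v_\infty)=0$, hence $v_\infty\in C^{1,\alpha_0}$, so $v_\infty$ itself has every half-cube point in $G_{M_0}(v_\infty,\cdot)$ with a fixed opening; the stability of the cone-touching sets under uniform convergence (a point that must be checked carefully, since the $G_M$ are defined with strict inequalities on $B_{\tau_0}$) then contradicts the assumed failure.
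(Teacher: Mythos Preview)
Your outline contains the correct argument as one of its alternatives, but you do not commit to it and instead flag an obstacle that is not there. The paper's proof follows your normalization step verbatim, applies the Approximation Lemma to obtain $h\in C^{1,\alpha_0}$ with $\|v-h\|_{L^\infty}\le\delta$, and then---exactly the route you mention parenthetically---sets $w:=(v-h)/2$ and applies Lemma~\ref{lemma 6} directly to $w$. The point you are missing is why this is legitimate: by Remark~\ref{rem_w2p} (proved in the Appendix via the recession operator), the approximating function $h$ can be taken in $W^{2,p}$, so $w$ is a genuine $L^p$-viscosity solution of
\[
G(D^2w,x):=\tfrac12\tilde F\bigl(2D^2w+D^2h(x),\,x\bigr)=\tilde f(x),
\]
and $G$ inherits the ellipticity of $F$. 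Lemma~\ref{lemma 6} then gives $|A_t(w,\Omega)\cap((Q_1^{d-1}\times(0,1))+x_0)|\le Ct^{-\mu}$ for all $t>1$.

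Once you have this, the ``transfer'' is immediate and does not involve sliding cones or compactness: since $h\in C^{1,\alpha_0}$ with universal norm, \emph{every} point of the half-cube lies in $G_{M_0}(h,\Omega)$ for some universal $M_0$, and the elementary inclusion $G_{M_0}(h,\Omega)\cap G_t(w,\Omega)\subset G_{2t+M_0}(v,\Omega)$ (sum the two touching cones) yields $A_{2t+M_0}(v,\Omega)\subset A_t(w,\Omega)$. Choosing $t$ large enough that $Ct^{-\mu}<\varepsilon_0$ and setting $M:=C(2t+M_0)$ finishes the proof after undoing the normalization. Your contradiction route is therefore unnecessary, and the cone-sliding heuristic---which, as you correctly worry, does not give a clean quantitative bound on the bad set---should be discarded.
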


\begin{proof}
The proof follows the general lines of Lemma \ref{lemma4}. We begin by setting
\[
v:=\frac{u-\ell}{C},
\]
where $\ell$ is an affine function and $C=C(d,\alpha)$ is chosen large enough, such that $\|v\|_{L^{\infty}(B_{13\sqrt{d}}^+)}\leq 1$ and
\[
-|x|^{1+\alpha}\leq v(x)\leq |x|^{1+\alpha} \text{ in } \Omega\setminus B_{13\sqrt{d}}^+.
\]
Moreover, $v$ solves
\[
\tilde{F}(D^2v,x)=\tilde{f}(x) \text{  in  } B_{14\sqrt{d}}^+,
\]
where
\[
\tilde{F}(D^2v,x):=\frac{1}{C}F(CD^2v,x) \hspace{0.2cm} \text{and}\hspace{0.2cm} \tilde{f}(x):=\frac{1}{C}f(x).
\]
Notice that the ellipticity constants of $F$ and $\tilde{F}$ are the same. Let $h\in C^{1,\alpha_0}_{loc}(\overline{B}_{13\sqrt{d}}^+)$ be the function from the previous lemma. The Maximum Principle implies that 
\[
\|h\|_{L^{\infty}(B^+_{13\sqrt{d}})}\leq \|v\|_{L^{\infty}(\partial B^+_{13\sqrt{d}})}\leq 1.
\]
Moreover, since $\|h\|_{C^{1,\alpha_0}(B^+_{12\sqrt{d}})}\leq C$, for some $N(d,\alpha)=N>1$, we have
\[
A_N(h,B_{12\sqrt{d}}^+)\cap\left(Q_1^{d-1}\times(0,1)+x_0\right)=\emptyset.
\]
Let us extend $h|_{B_{13\sqrt{d}}^+}$ continuously outside $B_{13\sqrt{d}}^+$ such that $h=v$ outside $B_{13\sqrt{d}}^+$. Hence 
\[
\|v-h\|_{L^\infty(\Omega)}=\|v-h\|_{L^\infty(B_{13\sqrt{d}}^+)}\leq 2,
\]
and
\[
-2-|x|^{1+\alpha}\leq h(x)\leq 2+|x|^{1+\alpha} \text{  in  } \Omega\setminus B_{13\sqrt{d}}^+.
\]
As a consequence, for some $M_0=M_0(d,\alpha)\geq N$, we have
\begin{equation}\label{M_0}
A_{M_0}(h,\Omega)\cap\left(\left(Q_1^{d-1}\times(0,1)\right)+x_0\right)=\emptyset.
\end{equation}
Now, set 
\[
w:=\frac{v-h}{2},
\] 
and notice that $w$ is a viscosity solution to 
\[
G(M,x):=\frac{1}{2}\tilde{F}(2M+D^2h,x)=\tilde{f}(x),
\]
and
\[
\|w\|_{L^\infty(\Omega)}=\|w\|_{L^\infty(B_{13\sqrt{d}}^+)}\leq 1.
\]
Thus, $w$ satisfies the hypotheses of the Lemma \ref{lemma 6} and we obtain for $t>1$:
\[
\left|A_t(w,\Omega)\cap\left(\left(Q_1^{d-1}\times(0,1)\right)+x_0\right)\right|\leq Ct^{-\mu}.
\]
In particular,
\[
\left|A_{2M_0}(v,\Omega)\cap\left(\left(Q_1^{d-1}\times(0,1)\right)+x_0\right)\right|\leq C{M_0}^{-\mu}.
\]
Finally, since by the definition of $v$ and Lemma \ref{GM properties}, $A_{2M_0}(v,\Omega)=A_{2CM_0}(u,\Omega)$, we set $M=2CM_0$ and finish the proof.
\end{proof}

\begin{lemma}\label{lemma 8}
Let $\varepsilon_0\in (0,1)$ and $u \in C(B^+_1)$ be a normalized viscosity solution to
\begin{equation*}
\begin{cases}
    F(D^2u,x)=f(x)&\hspace{.2in}\mbox{in}\hspace{.2in}B_{14\sqrt{d}}^+\\
    u=0&\hspace{.2in}\mbox{on}\hspace{.2in} B_{14\sqrt{d}}\cap\{x_d=0\}.
\end{cases}
\end{equation*}
Assume A\ref{A-ellip}-A\ref{A-oscF} are in force and $\|f\|_{L^d(B^+_{14\sqrt{d}})}\leq\varepsilon$. Extend $f$ by zero outside $B^+_{14\sqrt{d}}$ and set
\begin{align*}
A&:=A_{M^{k+1}}(u,B^+_{14\sqrt{d}})\cap\left(Q_1^{d-1}\times(0,1)\right)\\
B&:=\left(A_{M^{k}}(u,B^+_{14\sqrt{d}})\cap\left(Q_1^{d-1}\times(0,1)\right)\right)\cup\{x\in Q_1^{d-1}\times(0,1); M(f^d)\geq(C_0M^k)^d\},
\end{align*}
where $M>1$. Then $|A|\leq\varepsilon_0|B|$.
\end{lemma}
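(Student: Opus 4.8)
The proof is a Calderón–Zygmund decomposition argument essentially identical in structure to that of Lemma \ref{lemma 5}, but now powered by the improved approximation-based decay from Lemma \ref{lemma 7} instead of the cruder Lemma \ref{lemma3}/Lemma \ref{lemma4}. First I would record the easy containments: since $M>1$ and $k\in\mathbb{N}_0$ we have $A_{M^{k+1}}(u,B^+_{14\sqrt d})\subset A_{M^k}(u,B^+_{14\sqrt d})$ by Lemma \ref{GM properties} i) (equivalently, monotonicity of $G_t$ in $t$), so $A\subset B$, and both sit inside the unit half-cube $Q_1^{d-1}\times(0,1)$. Applying Lemma \ref{lemma 6} (or directly Lemma \ref{lemma3}) gives $|A|\le |A_{M}(u,B^+_{14\sqrt d})\cap(Q_1^{d-1}\times(0,1))|<\varepsilon_0<1$ once $M$ is taken large enough depending on $\varepsilon_0$; this is the global smallness hypothesis needed to invoke Lemma \ref{lem_calzyg}.

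Next I would verify the dyadic hypothesis of Lemma \ref{lem_calzyg}: for every dyadic subcube $Q=(Q^{d-1}_{1/2^i}\times(0,1/2^i))+x_0$ of $Q_1^{d-1}\times(0,1)$ with $|A\cap Q|>\varepsilon_0|Q|$, its predecessor $\tilde Q$ lies in $B$. Arguing by contrapositive, suppose $\tilde Q\not\subset B$; then there is $x_1\in\tilde Q\setminus B$, i.e. $x_1\in G_{M^k}(u,B^+_{14\sqrt d})$ and $M(f^d)(x_1)<(C_0M^k)^d$. Rescale via $T(y):=\tilde x_0+2^{-i}y$ (with $\tilde x_0$ the appropriate corner of $\tilde Q$ on $\{x_d=0\}$) and set $\tilde u(y):=2^{2i}M^{-k}u(T(y))$, which solves $\tilde F(D^2\tilde u,x)=\tilde f$ in $B^+_{14\sqrt d}$ with $\tilde f(y)=M^{-k}f(T(y))$; the inclusion $B^+_{14\sqrt d/2^i}(\tilde x_0)\subset B^+_{14\sqrt d}$ holds since $i\ge1$. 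The Maximal-function bound at $x_1$, together with $B^+_{14\sqrt d/2^i}(\tilde x_0)\subset Q_{C\sqrt d/2^i}(x_1)$, gives $\|\tilde f\|_{L^d(B^+_{14\sqrt d})}\le C(d)C_0\le\varepsilon$ after choosing $C_0$ small (this is where A\ref{A-oscF} and the approximation machinery require the smallness $\varepsilon$, not merely $\le 1$). The condition $x_1\in G_{M^k}(u,\cdot)$ rescales to $G_1(\tilde u,\tilde\Omega)\cap(Q_2^{d-1}\times(0,2))\ne\emptyset$, and $2^i(x_0-\tilde x_0)\in B_{9\sqrt d}\cap\{x_d\ge0\}$ by the geometry of predecessors, so Lemma \ref{lemma 7} applies and yields $|G_M(\tilde u,\tilde\Omega)\cap((Q_1^{d-1}\times(0,1))+2^i(x_0-\tilde x_0))|\ge 1-\varepsilon_0$. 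Unwinding the scaling and using Lemma \ref{GM properties} ii) (the factor $M^k$ in $\tilde u$ shifts the index), this becomes $|G_{M^{k+1}}(u,B^+_{14\sqrt d})\cap Q|\ge(1-\varepsilon_0)|Q|$, i.e. $|A\cap Q|\le\varepsilon_0|Q|$, contradicting the hypothesis on $Q$. Hence $\tilde Q\subset B$, and Lemma \ref{lem_calzyg} delivers $|A|\le\varepsilon_0|B|$.

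The main obstacle, as in Lemma \ref{lemma 5}, is bookkeeping: matching the radii $14\sqrt d$, $13\sqrt d$, $12\sqrt d$ through the rescalings so that the half-balls genuinely nest and so that the translated reference cube $(Q_1^{d-1}\times(0,1))+2^i(x_0-\tilde x_0)$ stays within the region where Lemma \ref{lemma 7} is valid; one must also be careful that the constant $M$ produced by Lemma \ref{lemma 7} is universal and independent of $k$ and $i$ (it is, since $\tilde u$ is renormalized to have $\|\tilde u\|_{L^\infty}\le1$ and to satisfy the prescribed growth outside the large half-ball), and that the same $M$ simultaneously works for the initial smallness $|A|<\varepsilon_0$. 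Beyond that, every step is a transcription of the proof of Lemma \ref{lemma 5} with ``Lemma \ref{lemma3}/\ref{lemma4}'' replaced by ``Lemma \ref{lemma 6}/\ref{lemma 7}'' and ``$\sigma$'' replaced by ``$\varepsilon_0$'', so I would present it compactly, referring back to that proof for the shared computations.
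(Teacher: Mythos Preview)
Your plan correctly identifies the Calder\'on--Zygmund structure and the replacement of Lemma~\ref{lemma4} by Lemma~\ref{lemma 7}, but it misses a genuine case distinction that the paper's proof carries out and that cannot be avoided. The issue is that Lemma~\ref{lemma 7}, unlike Lemma~\ref{lemma4}, requires the zero Dirichlet condition on $B_{14\sqrt d}\cap\{x_d=0\}$ (this is what feeds into the Approximation Lemma, Proposition~\ref{aproximationlemma}). Consequently, when you rescale around a dyadic cube $Q$, the rescaled function $\tilde u$ only inherits a flat-boundary condition if the center of rescaling lies on $\{x_d=0\}$. Your phrase ``$\tilde x_0$ the appropriate corner of $\tilde Q$ on $\{x_d=0\}$'' presupposes that every dyadic subcube of $Q_1^{d-1}\times(0,1)$ has such a corner, but of course most of them do not: at level $i$ the cubes are stacked $2^i$ high and only the bottom layer touches $\{x_d=0\}$. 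If instead you rescale around the projection $(x_0',0)$, then for cubes with $x_{0,d}\ge 8\sqrt d/2^i$ the translated reference point $2^i(x_0-(x_0',0))$ escapes $B_{9\sqrt d}$ and Lemma~\ref{lemma 7} no longer applies.

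The paper handles this by splitting into two cases according to whether $|x_0-(x_0',0)|$ is smaller or larger than $8\sqrt d/2^i$. In the near-boundary case one rescales around $(x_0',0)$ and invokes Lemma~\ref{lemma 7} exactly as you describe. In the far-from-boundary case the cube $Q$ is at distance $\gtrsim 2^{-i}\sqrt d$ from the flat boundary, so after rescaling one has a genuinely \emph{interior} problem on a full ball, and the paper invokes the interior counterpart \cite[Lemma~7]{Pimentel-Santos-Teixeira2022} rather than the boundary Lemma~\ref{lemma 7}. Your proposal would go through once you insert this dichotomy; without it, the step ``Lemma~\ref{lemma 7} applies'' fails for all dyadic cubes above the bottom layer.
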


\begin{proof}
The proof is based on the Calderón-Zygmund cube decomposition, similar to Lemma \ref{lemma 5}. First, observe that since $M^{k+1} \geq M^k$, we have
$$
A_{M^{k+1}}(u, B_{14\sqrt{d}}^+) \subset A_{M^k}(u, B_{14\sqrt{d}}^+),
$$
which implies $A \subset B \subset \left(Q_1^{d-1} \times (0,1)\right)$.
$$
|A| \leq \sigma < 1,
$$
where $\sigma = \epsilon_0$.

Now, suppose that for some $i \geq 1$, 
\[
Q = \left( Q_{\frac{1}{2^i}}^{d-1} \times \left( 0, \frac{1}{2^i} \right) \right) + x_0
\]
is a dyadic cube with predecessor 
\[
\tilde{Q} = \left( Q_{\frac{1}{2^{i-1}}}^{d-1} \times \left( 0, \frac{1}{2^{i-1}} \right) \right) + \tilde{x}_0.
\]
Suppose by contradiction that $Q$ satisfies
\begin{equation}\label{eq_contradiction}
|A \cap Q| = |A_{M^{k+1}}(u, B_{14\sqrt{d}}^+) \cap Q| > \epsilon_0 |Q|,
\end{equation}
but $\tilde{Q} \nsubseteq B$. Hence, there exists a point $x_1 \in \tilde{Q} \setminus B$ such that
\begin{equation}\label{x_12}
x_1 \in \tilde{Q} \cap G_{M^k}(u, B_{14\sqrt{d}}^+),    
\end{equation}
and
$$
M(f^d)(x_1) < (C_0 M^k)^d.
$$
We split the proof into two cases based on the distance between $x_0$ and $(x'_0, 0)$.

\textbf{Case 1:} Suppose that
$$
|x_0 - (x'_0, 0)| < \frac{8}{2^i} \sqrt{d}.
$$
As before, we set
$$
T(y) := (x'_0, 0) + \frac{y}{2^i}.
$$
and define the rescaled function
$$
\tilde{u}(y) := \frac{2^{2i}}{M^k} u(T(y)).
$$
Notice that $\tilde{u}$ is a viscosity solution to the problem
$$
\begin{cases}
\tilde{F}(D^2 \tilde{u}, y) = \tilde{f}(y) & \text{in } B_{14\sqrt{d}}^+, \\
\tilde{u} = 0 & \text{on } B_{14\sqrt{d}} \cap \{x_d = 0\},
\end{cases}
$$
where
$$
\tilde{f}(y) := \frac{1}{M^k} f(T(y)), \quad \tilde{F}(N, y) := \frac{1}{M^k} F(M^k N, T(y)).
$$
Notice that $\tilde{F}$ also satisfies the same hypotheses as $F$, and following the same argument as in Lemma \ref{lemma 5}, we obtain
$$
\|\tilde{f}\|_{L^p(B_{14\sqrt{d}/2^i}^+)} \leq \varepsilon.
$$
From condition \eqref{x_12}, we also have
$$
G_1(\tilde{u}, T^{-1}(B_{14\sqrt{d}}^+)) \cap \left( \left(Q_2^{d-1} \times (0,2) \right) + 2^i (\tilde{x}_0 - (x'_0, 0))\right) \neq \emptyset.
$$
Moreover, since $ |x_0 - \tilde{x}_0| \leq \sqrt{d}/2^i$, it follows that
$$
|2^i (\tilde{x}_0 - (x'_0, 0))| \leq 9 \sqrt{d}.
$$
By Lemma \ref{lemma 7}, we conclude
$$
\left|G_M(\tilde{u}, T^{-1}(B_{14\sqrt{d}}^+)) \cap \left(\left( Q_1^{d-1} \times (0,1)\right) + 2^i (x_0 - (x'_0, 0))\right)\right| \geq 1 - \varepsilon_0.
$$
This implies
$$
|G_{M^{k+1}}(u, B_{14\sqrt{d}}^+) \cap Q| \geq (1 - \varepsilon_0) |Q|,
$$
which contradicts \eqref{eq_contradiction}.

\textbf{Case 2:} Now, suppose that
$$
|x_0 - (x'_0, 0)| \geq \frac{8}{2^i} \sqrt{d}.
$$
We define 
$$
T(y) := \left( x_0 + \frac{e_n}{2^{i+1}} \right) + \frac{y}{2^i},
$$
and
$$
v(y) := \frac{\tilde{u}(y)}{\|\tilde{u}\|_{L^\infty}}.
$$
We have that $v$ solves
$$
\begin{cases}
\tilde{F}(D^2 v, y) = \tilde{f}(y) & \text{in } B_{8\sqrt{d}}^+, \\
v = 0 & \text{on } B_{8\sqrt{d}} \cap \{x_d = 0\},
\end{cases}
$$
where $\tilde{f}$ and $\tilde{F}$ are defined analogously to {\bf Case 1}. Again, from \eqref{x_12}, we have
$$
G_1(v, T^{-1}(B_{8\sqrt{d}}^+)) \cap \left(\left( Q_3^{d-1} \times (0,3) \right) + 2^i (\tilde{x}_0 - (x'_0, 0))\right) \neq \emptyset.
$$
By applying \cite[Lemma 7]{Pimentel-Santos-Teixeira2022}, we can conclude that
$$
\left|G_M(v, T^{-1}(B_{14\sqrt{d}}^+)) \cap \left(\left( Q_1^{d-1} \times (0,1) \right) + 2^i (x_0 - (x'_0, 0))\right)\right| \geq 1 - \varepsilon_0,
$$
which again contradicts \eqref{eq_contradiction}.    
\end{proof}

%------------------------------------------------------------------------------------------------
% 			OUR MAIN RESULT
%------------------------------------------------------------------------------------------------
\begin{proposition}\label{eq th1}
Let $u \in C(B_1)$ be a bounded viscosity solution to
\begin{equation}
\begin{cases}
F(D^2u,x)=f(x)&\hspace{.2in}\mbox{in}\hspace{.2in}B_1^+\\
u=0&\hspace{.2in}\mbox{on}\hspace{.2in} B_1\cap\{x_d=0\}.
\end{cases}
\end{equation}
Assume that A\ref{A-ellip}-A\ref{A-oscF} are satisfied. Then, $\theta\in L^p(B^+_1)$ and there exist a universal constant $C>0$ such that
\[
\|\theta\|_{L^p(B^+_{1/2})}\leq C.
\]
\end{proposition}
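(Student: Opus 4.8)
The strategy is to feed the accelerated Calder\'on--Zygmund decay of Lemma~\ref{lemma 8} into the distribution-function characterization of $L^p$ in Lemma~\ref{lem_eqsob}, via the pointwise comparison $\mu_\theta(t)\le|A_t(u,\cdot)|$ from \eqref{eq_integtheta}. First I would reduce, using Remark~\ref{rem_scaling} together with a dilation $x\mapsto\rho x$, to a normalized solution $u$ of $F(D^2u,x)=f$ in $B_{14\sqrt d}^+$ with $u=0$ on $B_{14\sqrt d}\cap\{x_d=0\}$, $\|u\|_{L^\infty}\le1$ and $\|f\|_{L^p(B_{14\sqrt d}^+)}\le\varepsilon$, where $\varepsilon>0$ is small and will be fixed below; extend $f$ by zero. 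Covering the rescaled copy of $B_{1/2}^+$ by finitely many translates of $Q_1^{d-1}\times(0,1)$ with base point in $B_{9\sqrt d}\cap\{x_d\ge0\}$ (the boxes detached from $\{x_d=0\}$ being handled by the interior analogue, cf.\ the Appendix), it suffices to bound $\int_{Q^{x_0}}|\theta|^p$ uniformly, where $Q^{x_0}:=(Q_1^{d-1}\times(0,1))+x_0$.

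Fix such a box and set $\alpha_k:=|A_{M^{k}}(u,\Omega)\cap Q^{x_0}|$ and $\beta_k:=|\{x\in Q^{x_0}:M(f^d)(x)\ge(C_0M^{k})^d\}|$ for $k\in\mathbb{N}_0$, where $M>1$, $C_0$ are the universal constants of Lemma~\ref{lemma 8} and $\varepsilon_0\in(0,1)$ the parameter there. The structural point I would emphasize is that $M$ and $C_0$ are universal and fixed independently of $\varepsilon_0$, while $\varepsilon_0$ may be taken as small as we please at the cost of shrinking the smallness constant $\varepsilon=\varepsilon(\varepsilon_0)$ of the reduction. Lemma~\ref{lemma 8} gives $\alpha_{k+1}\le\varepsilon_0(\alpha_k+\beta_k)$; iterating and using $\alpha_0\le|Q^{x_0}|=1$ yields
\[
\alpha_k\ \le\ \varepsilon_0^{\,k}+\sum_{j=0}^{k-1}\varepsilon_0^{\,k-j}\beta_j .
\]

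Now choose $\varepsilon_0$ so small that $M^p\varepsilon_0<1$ --- legitimate precisely because $M$ is universal and fixed before $\varepsilon_0$ --- and the corresponding $\varepsilon$; note that this choice, and hence the final constant, will also depend on $p$ (and on $\alpha$), consistently with Theorem~\ref{theo1}. Multiplying by $M^{pk}$, summing over $k\ge1$ and interchanging the order of summation gives
\[
\sum_{k\ge1}M^{pk}\alpha_k\ \le\ \frac{M^p\varepsilon_0}{1-M^p\varepsilon_0}\Big(1+\sum_{j\ge0}M^{pj}\beta_j\Big).
\]
The last series converges thanks to $p>d$: then $f^d\in L^{p/d}$ with $p/d>1$, so Proposition~\ref{Maximal function} yields $M(f^d)\in L^{p/d}$ with $\|M(f^d)\|_{L^{p/d}}\le C\|f\|_{L^p}^d$, and since $\beta_j\le\mu_{M(f^d)}(\eta M^{dj})$ for a suitable $\eta=\eta(C_0,d)>0$, Lemma~\ref{lem_eqsob} applied with exponent $p/d$, base $M^d>1$ and parameter $\eta$ gives $\sum_{j\ge1}M^{pj}\beta_j\le C\|M(f^d)\|_{L^{p/d}}^{p/d}\le C\|f\|_{L^p}^p$ (the $j=0$ term is bounded by $|Q^{x_0}|$). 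Hence $\sum_{k\ge1}M^{pk}\alpha_k\le C$ with $C=C(d,\lambda,\Lambda,\alpha_0,p)$ after invoking $\|f\|_{L^p}\le\varepsilon$.

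To conclude: the $L^\delta$-integrability corollary ensures $\theta<\infty$ a.e., and \eqref{eq_integtheta} gives $\mu_\theta(M^k)\le\alpha_k$ on $Q^{x_0}$, so $\sum_{k\ge1}M^{pk}\mu_\theta(M^k)<\infty$; applying Lemma~\ref{lem_eqsob} once more, now with exponent $p$ and base $M$, gives $\theta\in L^p(Q^{x_0})$ with $\|\theta\|_{L^p(Q^{x_0})}^p\le C(|Q^{x_0}|+\sum_kM^{pk}\mu_\theta(M^k))\le C$. Summing over the finitely many boxes and undoing the dilation yields $\|\theta\|_{L^p(B_{1/2}^+)}\le C$. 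I expect the main difficulty to be the bookkeeping around the two smallness parameters: one must verify that the opening constant $M$ produced by Lemmas~\ref{lemma 7}--\ref{lemma 8} is genuinely universal, so that the summability condition $M^p\varepsilon_0<1$ can be achieved by shrinking $\varepsilon_0$ --- equivalently, by shrinking $\|f\|_{L^p}$ through Remark~\ref{rem_scaling} and the approximation Lemma~\ref{aproximationlemma} --- and that this smallness is preserved throughout the dyadic rescalings inside Lemma~\ref{lemma 8}; the other delicate point, namely matching the $L^p$-summability of the maximal-function terms against the weight $M^{pj}$, is exactly where the hypothesis $p>d$ enters and is handled by reading $\beta_j$ as a distribution function at exponent $p/d$.
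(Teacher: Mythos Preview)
Your proposal is correct and follows essentially the same approach as the paper: reduce by scaling to a normalized solution on $B_{14\sqrt d}^+$, iterate the Calder\'on--Zygmund decay from Lemma~\ref{lemma 8} to bound $\sum_k M^{pk}\alpha_k$ after fixing $\varepsilon_0$ so that $M^p\varepsilon_0<1$, control the maximal-function terms via Proposition~\ref{Maximal function} (this is where $p>d$ enters), and conclude through Lemma~\ref{lem_eqsob} and \eqref{eq_integtheta}. Your write-up is in fact a bit more explicit than the paper's in the covering step and in the final passage from the $\alpha_k$ to $\mu_\theta$, and you correctly isolate the key structural point that $M$ is universal and fixed before $\varepsilon_0$.
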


\begin{proof}
By performing a scaling (recall remark \ref{rem_scaling}), we can assume without loss of generality that $u$ is a viscosity solution to
\begin{equation*}
\begin{cases}
{F}(D^2{u},x)={f}(x)&\hspace{.2in} \mbox{in} \hspace{.2in} B_{14\sqrt{d}}^+\\
{u}=0&\hspace{.2in}\mbox{on}\hspace{.2in} B_{14\sqrt{d}}\cap\{x_d=0\},
\end{cases}
\end{equation*}
where the hypotheses $A$\ref{A-ellip}-$A$\ref{A-oscF} are also satisfied. Let $M$ and $C_0$ be the same as in Lemma \ref{lemma 8} and fix $\varepsilon_0$ so that $\varepsilon_0M^p = 1/2$. Now, we define
\[
\alpha_k:=|A_{M^k}({u},B^+_{14\sqrt{d}})\cap (Q^{d-1}_1\times (0,1))|,
\]
and
\[
\beta_k:=|\{x\in (Q^{d-1}_1\times(0,1)); M({f}^d)(x)\geq (C_0M^k)^d\}|.
\]
By applying Lemma \ref{lemma 8}, we obtain
\[
\alpha_{k+1}\leq\varepsilon_0(\alpha_k+\beta_k),
\]
which implies
\[
\alpha_k\leq\varepsilon^k_0+\sum^{k-1}_{i=0}\varepsilon^{k-i}_0\beta_i.
\]
We also have that $M({f}^d) \in L^{p/d}(B_{14\sqrt{d}})$ with
\[
\|M({f}^d)\|_{L^{\frac{p}{d}}(B^+_{14\sqrt{d}})}\leq C(p,d)\|{f}^d\|_{L^{\frac{p}{d}}(B^+_{14\sqrt{d}})}\leq\varepsilon C(d,p)\leq C(d,p),
\]
and since $\beta_k$ is the distribution function of $M(\tilde{f}^d)$ we get from Proposition \ref{Maximal function}
\[
\sum_{k\in\mathbb{N}}M^{pk}\beta_k\leq C\|{f}^d\|^{\frac{d}{p}}_{L^{\frac{p}{d}}(B^+_{14\sqrt{d}})} = C\|{f}\|^p_{L^p(B^+_{14\sqrt{d}})}\leq C(d,p).
\]
Moreover, 
\[
M^{pk}\alpha_k\leq M^{pk}\varepsilon^k_0+\sum^{k-1}_{i=0}\varepsilon^{k-i}_0M^{pk}\beta_k,
\]
so that
\[
\sum_{k\in\mathbb{N}}M^{pk}\alpha_k\leq\sum_{k\in\mathbb{N}}(M^p\varepsilon_0)^k+\sum_{k\in\mathbb{N}}\sum^{k-1}_{i=0}\varepsilon^{k-i}_0M^{p(k-i)}M^{pi}\beta_i\leq\sum_{k\in\mathbb{N}}2^{-k}+\left(\sum_{k\geq 0}M^{pk}\beta_k\right)\left(\sum_{k\in\mathbb{N}}2^{-k}\right)\leq C(d,p),
\]
which finishes the proof.
\end{proof}
Next, we present the proof of our first main result.
\begin{proof}[Proof of the theorem \ref{theo1}]
Let $x_0 \in B_{1/2}^+$ and $\psi$ be a $C^{1,\alpha}$-cone of opening $\pm M$ and vertex $x_0$ given by
\[
\psi(x)=L(x)\pm \frac{M}{2}|x-x_0|^{1+\alpha}.
\]
We estimate (extending $\psi$ by zero in a neighbourhood of $B_1^+$ if necessary)
{\small
\begin{align*}
    \Delta^{1+\alpha}_h\psi(x_0)&:=\dfrac{\psi(x_0+h)+\psi(x_0-h)-2\psi(x_0)}{|h|^{1+\alpha}}\\
    &=
    \dfrac{\ell(x_0+h)\pm \frac{M}{2}|x_0+h-x_0|^{1+\alpha}+\ell(x_0-h)\pm \frac{M}{2}|x_0-h-x_0|^{1+\alpha}-2(\ell(x_0)\pm\frac{M}{2}|x_0-x_0|^{1+\alpha})}{|h|^{1+\alpha}}\\
    &=
    \pm M.
\end{align*}
}
Now, consider the function $\tilde{u} := u\chi_{B_{1/2}^+}$, so we have $\tilde{u} \in L^\infty(\R^d)$ and $\tilde{u} \equiv u$ in $B_{1/2}^+$. 
If $\psi$ is a $C^{1, \alpha}$-cone of opening $M$ and vertex $x_0$, and 
touches $\tilde{u}$ strictly in $B_{1/10}(x_0)$ from above at $x_0 \in B^+_{1/2}$, then for all $h \in (0,\frac{1}{10})$ we have 
\begin{align*}
\Delta^{1+\alpha}_h\tilde{u}&=\dfrac{\tilde{u}(x_0+h)+\tilde{u}(x_0-h)-2\tilde{u}(x_0)}{|h|^{1+\alpha}}\\
& \leq\dfrac{\psi(x_0+h)+\psi(x_0-h)-2\psi(x_0)}{|h|^{1+\alpha}}\\
&\leq
\theta(\tilde{u},B_{1/2})(x_0)\\
&\leq
\theta(u,B^+_{1/2})(x_0).
\end{align*}
Similarly, if we touch $\tilde{u}$ in $B_{1/10}(x_0)$ from below at $x_0\in B^+_{1/10}$ by a concave $C^{1,\alpha}$-cone $\psi$ with opening $M$ and vertex $x_0$, then for all $h \in (0,\frac{1}{10})$:
\begin{align*}
\Delta^{1+\alpha}_h\tilde{u}&=\dfrac{\tilde{u}(x_0+h)+\tilde{u}(x_0-h)-2\tilde{u}(x_0)}{|h|^{1+\alpha}}\\
& \geq\dfrac{\psi(x_0+h)+\psi(x_0-h)-2\psi(x_0)}{|h|^{1+\alpha}}\\
&\geq
-\theta(\tilde{u},B_{1/2})(x_0)\\
&\geq
-\theta(u,B^+_{1/2})(x_0).
\end{align*}
Hence, the previous inequalities together with Proposition \ref{eq th1} yield to 
\[
\|\Delta^{1+\alpha}_h\tilde{u}\|_{L^p(B^+_{1/2})}\leq \|\theta(u,B^+_{1/2})\|_{L^p(B^+_{1/2})}\leq C,
\]
uniformly for all $0<h<1/10$, and for $0<\alpha<\alpha_0$. Let us define the singular integral operator
%We define the singular integral operator
\[
I_{\sigma/2}(v)(x_0):=\int_{\mathbb{R}^d}\dfrac{v(x_0+y)+v(x_0-y)-2v(y)}{|y|^{d+\sigma}}dy,
\]
for $1 < \sigma < 1+\alpha$. Up to constants, we have that $I_{\sigma/2}(v)\approx(-\Delta)^{\sigma/2}(v)$. We estimate for $x_0\in B^+_{1/2}$,
\begin{align*}
I_{\sigma/2}(\tilde{u})(x_0)&=\int_{\mathbb{R}^d}\dfrac{\tilde{u}(x_0+y)+\tilde{u}(x_0-y)-2\tilde{u}(y)}{|y|^{d+\sigma}}dy=\int_{B_{1/2}}\dfrac{\tilde{u}(x_0+y)+\tilde{u}(x_0-y)-2\tilde{u}(y)}{|y|^{d+\sigma}}dy\\
&=
\int_{B_{1/10}}\dfrac{\tilde{u}(x_0+y)+\tilde{u}(x_0-y)-2\tilde{u}(y)}{|y|^{d+\sigma}}dy+\int_{B_{1/2}\setminus B_{1/{10}}}\dfrac{\tilde{u}(x_0+y)+\tilde{u}(x_0-y)-2\tilde{u}(y)}{|y|^{d+\sigma}}dy\\
&\leq
\theta(\tilde{u},B_{1/2})\int_{B_{1/10}}\frac{dy}{|y|^{d-\delta}}+C\|\tilde{u}\|_{L^{\infty}(B_1)}\\
&\leq
\dfrac{C}{\delta 10^d}\left(\theta(u,B^+_{1/2})+\|u\|_{L^{\infty}(B^+_1)}\right).
\end{align*}
where $C>0$ is a universal constant and $\delta=1+\alpha-\sigma$. Therefore,
\[
\|(-\Delta)^{\sigma/2}\tilde{u}\|^p_{L^p(B_{1/2})}\approx\|I_{\sigma/2}(\tilde{u})\|_{L^P(B_{1/2})}\leq \dfrac{C}{\delta 10^d}\left(\theta(u,B^+_{1/2})+\|u\|_{L^{\infty}(B^+_1)}\right),
\]
which implies
\[
(-\Delta)^{\sigma/2}\tilde{u} \in L^p(B^+_{1/2}).
\]
By setting $g:=(-\Delta)^{\sigma/2}\tilde{u}$ in $B^+_{1/2}$, we get that $\tilde{u}$ solve the following Dirichlet problem a.e. in $B^+_{1/2}$
\begin{equation}\label{eqfrac}
\begin{cases}
    (-\Delta)^{\sigma/2}\tilde{u}=g \quad &\text{in} \quad B^+_{1/2}\\
    \tilde{u}=0 \quad &\text{in} \quad  \mathbb{R}^d\setminus B^+_{1/2}.
\end{cases}
\end{equation}
Notice that since $u \in W^{1,p}({B_{1/2}^+})$, then $u \in W^{\sigma/2,p}(B_{1/2}^+)$, see for instance \cite[Proposition 2.2]{Nezza-Palatucci-Valdinoci-2012}.
Now, it is easy to see that $\tilde{u}$ is also a weak solution to \eqref{eqfrac}; to see that, let us consider $\varphi \in W^{\sigma/2,p}_0(B_{1/2}^+)$. Multiplying \eqref{eqfrac} by $\varphi$ and integrating over $\R^d$ we obtain
\[
\int_{\R^d}\varphi(x)(-\Delta)^{\sigma/2}u(x)dx = \int_{\R^d}\varphi(x)g(x)dx.
\]
We have that
\begin{align}
\int_{\R^d}\varphi(x)(-\Delta)^{\sigma/2}\tilde{u}(x)dx & = c(d,\sigma) \int_{\R^d}\int_{\R^d}\varphi(x)\dfrac{\tilde{u}(x) - \tilde{u}(y)}{|x-y|^{d+\sigma}}dydx \\
& = \dfrac{c(d,\sigma)}{2}\int_{\R^d}\int_{\R^d}\varphi(x)\dfrac{\tilde{u}(x) - \tilde{u}(y)}{|x-y|^{d+\sigma}}dydx - \dfrac{c(d,\sigma)}{2}\int_{\R^d}\int_{\R^d} \varphi(y)\dfrac{\tilde{u}(x) - \tilde{u}(y)}{|x-y|^{d+\sigma}}dydx \\
& = \dfrac{c(d,\sigma)}{2}\int_{\R^d}\int_{\R^d}\dfrac{(\tilde{u}(x) - \tilde{u}(y))(\varphi(x) - \varphi(y))}{|x-y|^{d+\sigma}}dydx,
%c(d,\sigma)\int_{\R^d}\int_{\R^d}\dfrac{(\tilde{u}(x) - \tilde{u}(y))(\varphi(x) - \varphi(y))}{|x-y|^{d+\sigma}}dydx,  
\end{align}
which implies
\[
\dfrac{c(d,\sigma)}{2}\int_{\R^d}\int_{\R^d}\dfrac{(\tilde{u}(x) - \tilde{u}(y))(\varphi(x) - \varphi(y))}{|x-y|^{d+\sigma}}dydx = \int_{\R^d}\varphi(x)g(x)dx.
%c(d,\sigma)\int_{\R^d}\int_{\R^d}\dfrac{(\tilde{u}(x) - \tilde{u}(y))(\varphi(x) - \varphi(y))}{|x-y|^{d+\sigma}}dydx = \int_{\R^d}\varphi(x)g(x)dx.
\]
Hence $\tilde{u}$ is a weak solution to \eqref{eq_fraclap}.
\end{proof}

\begin{proof}[Proof of Corollary \ref{cor_fracreg}]
Since $u$ is a weak solution to \eqref{eqfrac},  we can infer from \cite[Corollary 4.7]{AFLY2023} that $u \in W^{\gamma, p}(B_{1/2}^+)$, for all $\gamma < \sigma$. This finishes the proof.
\end{proof}

%------------------------------------------------------------------------------------------------
% 			OUR MAIN RESULT WITH GERAL SETUP
%------------------------------------------------------------------------------------------------

\section{Estimates for gradient-dependent operators}

In this section, we present the proof of Theorem \ref{prop geral case II}. Recall that from Remark \ref{rem_escau}, we can work under the weaker assumption $p \in (d-\varepsilon_0, +\infty)$.

\begin{proposition}\label{prop geral case I}
Let $u\in C(B^+_1)$ be a bounded viscosity solution to
\begin{equation}\label{eq prop geral case I}
    \begin{cases}
        F(D^2u,Du,u,x)=f(x) \quad &\text{in} \quad B^+_1,\\
        u=0 \quad & \text{on} \quad B_1\cap \{x_d=0\},
    \end{cases}
\end{equation}
Assume that A\ref{A-sourceterm}, A\ref{A-oscF} and A\ref{A-strcond} are in force and $\varepsilon \in (0, \alpha_0)$. Then there exists $\varepsilon_0 = \varepsilon_0(d,\lambda,\Lambda,b)$, such that if $p \in (d-\varepsilon_0, +\infty)$, then $u\in W^{\gamma,p}(B^+_{1/2})$ and
\begin{equation}\label{eq 2.16}
\|u\|_{W^{\gamma,p}(B^+_{1/2})}\leq C\left(\|u\|_{L^{\infty}(B^+_1)}+\|f\|_{L^p(B^+_1)}\right),
\end{equation}
for all $\gamma < 1+\varepsilon$, and for a positive constant $C=C(d,\lambda,\Lambda,b,c,p,r_0)$.
\end{proposition}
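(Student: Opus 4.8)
The plan is to follow the route of Theorem~\ref{theo1} and Corollary~\ref{cor_fracreg}, using a priori $W^{1,p}$ information to remove the dependence of $F$ on $Du$ and $u$. \textbf{Reduction.} First I would invoke the boundary $W^{1,p}$ estimate for $L^p$-viscosity solutions of \eqref{eq prop geral case I} from \cite{nikki2009} (which builds on \cite{Swiech97} and \cite{Escauriaza93}); this is exactly where the exponent $\varepsilon_0=\varepsilon_0(d,\lambda,\Lambda,b)$ originates, so that for $p\in(d-\varepsilon_0,+\infty)$,
\[
u\in W^{1,p}(B^+_{3/4}),\qquad \|u\|_{W^{1,p}(B^+_{3/4})}\le C\big(\|u\|_{L^\infty(B^+_1)}+\|f\|_{L^p(B^+_1)}\big).
\]
Consequently $g:=b|Du|+c|u|\in L^p(B^+_{3/4})$ with a bound of the same type, and by A\ref{A-strcond} the function $u$ is, in the $L^p$-viscosity sense, simultaneously a subsolution of $\mathcal M^-_{\lambda,\Lambda}(D^2u)=f+g$ and a supersolution of $\mathcal M^+_{\lambda,\Lambda}(D^2u)=f-g$ in $B^+_{3/4}$ (equivalently, a sub- and supersolution pair for the frozen operator $F_0(M,x):=F(M,0,0,x)$, which is $(\lambda,\Lambda)$-elliptic and inherits A\ref{A-oscF}). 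In particular $f\pm g\in L^p$ with norm controlled by $\|u\|_{L^\infty(B^+_1)}+\|f\|_{L^p(B^+_1)}$.

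\textbf{Aperture estimate.} Next I would rerun the chain Lemma~\ref{Lemma 1}--Lemma~\ref{lemma 8} and Proposition~\ref{eq th1} for $u$, now seen through the one-sided Pucci inequalities above; the cone-touching lemmas, the approximation step (Proposition~\ref{aproximationlemma}, whose conclusion requires only $(\lambda,\Lambda)$-ellipticity and A\ref{A-oscF}, both satisfied by $F_0$), and the iteration scheme all go through with $f\pm g$ in place of $f$. The sole genuine change is in the Calder\'on--Zygmund argument of Lemmas~\ref{lemma 5} and \ref{lemma 8}, where the bad set $B$ must be enlarged by $\{x\in Q_1^{d-1}\times(0,1): M(|Du|^d)(x)\ge (C_1M^k)^d\}$ for a small universal $C_1$. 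This is harmless: under the dyadic rescalings used there the coefficient $b$ is turned into $b/2^i$ (and $c$ into $c/2^{2i}$), so no smallness of $b,c$ is needed at the base scale, and the residual first-order term of the rescaled equation has $L^d$-norm over the unit ball bounded by $C\,C_1$ (using $M(|Du|^d)(x_1)<(C_1M^k)^d$ at the good point $x_1$), hence $\le\varepsilon$ once $C_1$ is small; moreover, since $|Du|^d\in L^{p/d}(B^+_{3/4})$ with $p/d>1$, Proposition~\ref{Maximal function} gives $M(|Du|^d)\in L^{p/d}$, so by Lemma~\ref{lem_eqsob} the tail sums $\sum_k M^{pk}\big|\{M(|Du|^d)\ge (C_1M^k)^d\}\big|$ converge, exactly as for $M(f^d)$ in Proposition~\ref{eq th1}. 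The outcome is $\theta_{1+\alpha}(u,B^+_{3/4})\in L^p(B^+_{1/2})$ with $\|\theta_{1+\alpha}\|_{L^p(B^+_{1/2})}\le C(\|u\|_{L^\infty(B^+_1)}+\|f\|_{L^p(B^+_1)})$. I expect this step to be the main obstacle: one must check that enlarging the bad set with $M(|Du|^d)$ preserves the geometric recursion $\alpha_{k+1}\le\varepsilon_0(\alpha_k+\beta_k)$, i.e.\ that the first-order term genuinely behaves as a lower-order perturbation at every dyadic scale; this is precisely where the hypothesis $p>d-\varepsilon_0$ and the dependence of $\varepsilon_0$ on $b$ are used, the remaining ingredients being a faithful transcription of the previous section and of \cite{nikki2009}.

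\textbf{Conclusion.} From here the argument is identical to the last part of the proof of Theorem~\ref{theo1}: touching $\tilde u:=u\chi_{B^+_{1/2}}$ from above and below by $C^{1,\alpha}$-cones and bounding the second-order incremental quotients by $\theta_{1+\alpha}$ shows that $(-\Delta)^{\sigma/2}\tilde u\in L^p(B^+_{1/2})$ for $\sigma\in(1,1+\varepsilon)$, so $\tilde u$ is a weak solution of a fractional Laplacian problem with $L^p$ right-hand side; Corollary~\ref{cor_fracreg} then yields $u\in W^{\gamma,p}(B^+_{1/2})$ together with the estimate \eqref{eq 2.16} for every $\gamma<1+\varepsilon$.
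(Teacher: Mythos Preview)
Your approach is correct in spirit but takes a substantially different and more laborious route than the paper. The paper does \emph{not} rerun the chain of cone-touching lemmas with an enlarged bad set containing $\{M(|Du|^d)\ge (C_1M^k)^d\}$. Instead it absorbs the lower-order terms in one stroke: by \cite[Theorem~3.6]{Caff-Crand-Kocan-Swi-1996} the viscosity solution $u$ is pointwise twice differentiable a.e., so one may define $\tilde f(x):=F(D^2u,0,0,x)$ pointwise; the structure condition A\ref{A-strcond} gives $|\tilde f|\le b|Du|+c|u|+|f|$, and the $W^{1,p}$ estimate from \cite{nikki2009} yields $\tilde f\in L^p$. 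Then \cite[Corollary~1.6]{Caff-Crand-Kocan-Swi-1996} promotes this to a genuine $L^p$-viscosity equation $\tilde F(D^2u,x)=\tilde f(x)$ with $\tilde F(M,x)=F(M,0,0,x)$, and one is back in the setting of Theorem~\ref{theo1} and Corollary~\ref{cor_fracreg}, modulo the continuity hypothesis A\ref{A-cont}, which is removed by the standard mollification/approximation argument of \cite[Theorem~4.3]{nikki2009} together with weak lower semicontinuity of the $W^{\gamma,p}$-norm.

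What each route buys: the paper's reduction is short and reuses Theorem~\ref{theo1} as a black box, but it leans on the twice-differentiability and viscosity-to-pointwise machinery of \cite{Caff-Crand-Kocan-Swi-1996} and on a mollification step to recover A\ref{A-cont}. Your direct rerun avoids both of those ingredients and shows transparently how the first-order term behaves as a lower-order perturbation at every dyadic scale, at the cost of redoing the entire Calder\'on--Zygmund iteration. One small point to watch in your version: the line ``$|Du|^d\in L^{p/d}$ with $p/d>1$'' implicitly assumes $p>d$; for $p\in(d-\varepsilon_0,d]$ you need the improved gradient integrability $Du\in L^q$ for some $q>d$ (available from \cite{Swiech97,nikki2009}) before invoking the strong-type maximal inequality, which is precisely how Remark~\ref{rem_escau} is meant to be used.
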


\begin{proof}

We prove that the problem can be reduced to considering only equations independent of $Du$ and $u$. By applying \cite[Theorem 3.6]{Caff-Crand-Kocan-Swi-1996}, we can infer that $u$ is pointwise twice differentiable a.e., and satisfies
$$
F(D^2u, Du, u, x) = f(x) \quad \text{a.e. in} \quad B_1^+.
$$
Let us define $\tilde{f}(x) := F(D^2u, 0, 0, x)$. From Assumption $A\ref{A-strcond}$, we have
$$
|\tilde{f}(x)| \leq b |Du(x)| + c |u(x)| + |f(x)| \quad \text{for a.e.} \quad x \in B_1^+.
$$
For a sufficiently small $\beta_0$ (recall $A\ref{A-oscF}$), it follows from the $W^{1,p}$-estimate (see \cite[Section 3]{nikki2009}) that $\tilde{f} \in L^p(B_1^+)$. Consequently, by applying \cite[Corollary 1.6]{Caff-Crand-Kocan-Swi-1996} we obtain
$$
F(D^2u, 0, 0, x) = \tilde{f}(x) \quad \text{in} \quad B_1^+,
$$
in the $L^p$-viscosity sense. Hence, we can consider that $u \in C(B_1^+)$ is a viscosity solution to
\begin{equation}\label{eq_101}
    \begin{cases}
        \tilde{F}(D^2u, x) = \tilde{f}(x) & \quad \text{in} \quad B_1^+, \\
        u = 0 & \quad \text{on} \quad B_1 \cap \{x_d = 0\},
    \end{cases}
\end{equation}
where $\tilde{F}(M, x) := F(M, 0, 0, x)$. At this point, we want to apply Theorem \ref{theo1}, but $F$ does not satisfy $A\ref{A-cont}$. To bypass this, we use an approximation procedure as in \cite[Proof of Theorem 4.3]{nikki2009}. Although we omit the detailed argument here (since it is very similar to the one presented in \cite{nikki2009}), we give some details below. We set  
\[
\tilde{F}_j(M,x) := \int \phi(x-y)\tilde{F}(M, y)dy,  
\]
for a suitable mollifier function $\phi$, and $F$ is extended by zero outside $B^+_1$, we can conclude that $F_j$ satisfies the hypotheses of Theorem \ref{theo1}, and there exists a family of solutions to \eqref{eq_101} $(u_j)_{j\in \N}$, such that $u_j \to u$ weakly in $W^{\gamma,p}(B^+_{1/2})$, for all $\gamma < 1+\varepsilon$, which implies (by the lower semicontinuity of the norm)
\[
\|u\|_{W^{\gamma,p}(B^+_{1/2})}\leq C\left(\|u\|_{L^{\infty}(B^+_1)}+\|{f}\|_{L^p(B^+_1)}\right).
\]
\end{proof}

In what follows, we prove our second main result, which is a generalization of the previous proposition for more general domains.

\begin{proof}[Proof of the Theorem \ref{prop geral case II}]
We begin by showing that it is enough to prove the result for the case where $\varphi \equiv 0$. To do this, define the function
\[
w := u - \varphi.
\]
Notice that $w$ is a viscosity solution to
\[
\begin{cases}
    G(D^2w, Dw, w, x) = g(x) & \text{in} \quad \Omega, \\
    w = 0 & \text{on} \quad \partial \Omega,
\end{cases}
\]
where $g(x) = f(x) - F(D^2\varphi(x), D\varphi(x), \varphi(x), x)$, and
\[
G(M, p, r, x) := F(M + D^2\varphi, p + D\varphi, r + \varphi, x) - F(D^2\varphi, D\varphi, \varphi, x).
\]
From the definition of $g$, and the structure condition on $F$ (recall A\ref{A-strcond}), we have that 
\[
|g| \leq |f| + C(\lambda, \Lambda,b,c)(\|D^2\varphi\|+\|Du\|+|\varphi|),
\] 
and thus $g \in L^p(\Omega)$. Since $F$ is uniformly elliptic, $G$ is also uniformly elliptic. 

Assuming that the theorem holds in this simplified case, we obtain
\[
\|w\|_{W^{\gamma,p}(\Omega)} \leq C\left(\|w\|_{L^{\infty}(\Omega)} + \|g\|_{L^p(\Omega)}\right),
\]
for all $\gamma < 1+\varepsilon$, which implies the desired estimate \eqref{eq prop geral case II}. From now on, we assume $\varphi \equiv 0$. Given that $\partial \Omega \in C^{1,1}$, we flatten the boundary in order to apply the previous proposition.

For any $y \in \partial \Omega$, there exists a $C^{1,1}$-diffeomorphism $\psi$ that maps a neighborhood $V(y)$ to the upper half-ball $B^+_1$, i.e.,
\[
\psi: V(y) \xrightarrow{\cong} B_1(0),
\]
such that $\psi(y) = 0$ and $\psi(V(y) \cap \Omega) = B^+_1$. Define $\vartheta := \phi \circ \psi \in W^{2,p}(V(y)) \cap C(V(y))$, where $\phi \in W^{2,p}(B^+_1) \cap C(B^+_1)$. Thus, we have
\[
D\vartheta = D\phi(\psi)D\psi \quad \text{and} \quad D^2\vartheta = D\psi^{T} D^2\phi(\psi) D\psi + D\phi(\psi) D^2\psi(\psi^{-1}(x)).
\]
We set $v := u \circ \psi^{-1} \in C(B^+_1)$. Since $u$ is a viscosity solution to \eqref{eq sol prop geral case II}, we obtain
\[
\tilde{F}(D^2 v(x), D v(x), v(x), x) := F(D^2(\vartheta(\psi^{-1}(x))), D(\vartheta(\psi^{-1}(x))), u(\psi^{-1}(x)), \psi^{-1}(x)).
\]
To simplify, we write it as
\[
\tilde{F}(D^2\phi(x), D\phi(x), v(x), x) = F(D^2\vartheta, D\vartheta, u, x) \circ \psi^{-1}.
\]
As a result, the function $v$ is a solution to
\[
\tilde{F}(D^2v, Dv, v, x) = \tilde{f}(x) \quad \text{in} \quad B^+_1,
\]
in the viscosity sense, where $\tilde{f}(x) := f(\psi^{-1}(x)) \in L^p(B^+_1)$. Observe that $\tilde{F}$ is uniformly elliptic, with possibly modified constants depending only on $\lambda, \Lambda$, and the $C^{1,1}$-norm of $\psi$.
%$C^{1,1}$ norm of $\psi$.
To complete the proof, we verify that the condition on $\beta_{\tilde{F}}$ is satisfied. In fact, we have
\[
\beta_{\tilde{F}} := \frac{1}{\|M\|} |\tilde{F}(M, x) - \tilde{F}(M, y)| \leq C(\psi)\beta_F(\psi^{-1}(x), \psi^{-1}(y)) \leq \beta_0,
\]
for $\beta_0$ sufficiently small. Therefore, we can apply Proposition \ref{prop geral case I}, concluding the proof.
\end{proof}

\textbf{Acknowledgments.}  M. Santos was partially supported by the Portuguese government through FCT-Funda\c c\~ao para a Ci\^encia e a Tecnologia, I.P., under the projects UID/MAT/04459/2020, PTDC/MAT-PUR/1788/2020 and UIDB/04561/2020: https://doi.org/10.54499/UIDB/04561/2020 in CMAFcIO - University of Lisbon. 
C. Alcantara was partially supported by the Coordenação de Aperfeiçoamento de Pessoal de Nível Superior (CAPES) – Brasil and by Stone Instituição de Pagamento S.A., under the Project StoneLab. This study was financed in part by the CAPES—Brazil - Finance Code 001.

\section{Appendix}\label{appendix}

\subsection{Density results}

In this appendix, we show, without loss of generality, that we can assume $F$-harmonic functions belong to $W^{2,p}(B_1^+)$ (conform Remark \ref{rem_w2p}). Our main objective is to establish the following proposition:

\begin{proposition}\label{prop_w2papp}
Let $u \in C(B_1^+)$ be a viscosity solution to \eqref{1}, with $f \in L^p(B_1^+)$, $p > d$. Given $\delta > 0$, there exists a sequence $(u_n)_{n \in \N} \subset W^{2, p}(B^+_1)\cap S(\lambda - \delta, \Lambda+\delta, f)$ that converges locally uniformly to u.
\end{proposition}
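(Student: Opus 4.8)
The plan is to produce the sequence by a two-step regularization: first reduce to a Pucci class, then approximate a function in that class by $W^{2,p}$ members of it, identifying the limit via the comparison principle.

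\smallskip
\textbf{Step 1: reduction to the Pucci class.} Exactly as in the proof of Proposition \ref{prop geral case I}, I would first invoke \cite[Theorem 3.6]{Caff-Crand-Kocan-Swi-1996} to conclude that $u$ is punctually twice differentiable a.e.\ and satisfies $F(D^2u,Du,u,x)=f(x)$ a.e.\ in $B_1^+$; absorbing the lower–order terms into the source via the $W^{1,p}$-estimate of \cite[Section 3]{nikki2009} and using uniform ellipticity, this shows $u\in S(\lambda,\Lambda,\tilde f)$ for some $\tilde f\in L^p(B_1^+)$ (with $\|\tilde f\|_{L^p}$ controlled by $\|u\|_{L^\infty}$ and $\|f\|_{L^p}$). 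Hence it is enough to approximate an arbitrary $u\in C(B_1^+)\cap S(\lambda,\Lambda,\tilde f)$ by a sequence in $W^{2,p}(B_1^+)\cap S(\lambda-\delta,\Lambda+\delta,\tilde f)$ converging locally uniformly; for the homogeneous equation of Remark \ref{rem_w2p} one has simply $\tilde f\equiv 0$.

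\smallskip
\textbf{Step 2: construction and convergence of the approximants.} I would mollify $\tilde f$ to obtain $\tilde f_n\in C^\infty$ with $\tilde f_n\to\tilde f$ in $L^p_{\mathrm{loc}}(B_1^+)$, regularize $F$ in the $x$–variable as in \cite[Proof of Theorem 4.3]{nikki2009} to obtain $(\lambda,\Lambda)$-elliptic operators $F_n$ with smooth ingredients satisfying $A\ref{A-oscF}$ with the same $\beta_0$, and solve, on slightly smaller half–balls, the Dirichlet problems $F_n(D^2u_n,x)=\tilde f_n$ with boundary datum $u$ (solvability by Perron's method together with the ABP/comparison principle for $L^p$-viscosity solutions). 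The uniform ABP bound gives $\|u_n\|_{L^\infty}$ bounds, the interior $C^{1,\alpha_0}$-estimates give local compactness, and the stability theory for $L^p$-viscosity solutions combined with uniqueness of the limit problem (comparison) forces $u_n\to u$ locally uniformly. Since each $u_n$ is then again punctually twice differentiable a.e.\ and solves $F_n(D^2u_n,x)=\tilde f_n$ a.e., ellipticity yields $\mathcal M^-_{\lambda,\Lambda}(D^2u_n)\le \tilde f_n\le \mathcal M^+_{\lambda,\Lambda}(D^2u_n)$ a.e., i.e.\ membership in the (weak) Pucci class with source $\tilde f_n$, and the monotonicity of the Pucci operators in the ellipticity constants then places $u_n$ in $S(\lambda-\delta,\Lambda+\delta,\tilde f_n)$; the extra room $\lambda-\delta,\Lambda+\delta$ is there precisely to absorb the errors introduced by the regularizations.

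\smallskip
\textbf{The main obstacle.} The delicate point is the membership $u_n\in W^{2,p}(B_1^+)$: since we are \emph{not} assuming convexity of $F$ (nor $C^{1,1}$-estimates for the homogeneous equation), the Caffarelli--Escauriaza $W^{2,p}$-theory \cite{Caffarelli1989,Escauriaza93} is unavailable, so this cannot be read off from the equation solved by $u_n$, and a naive mollification of $u$ fails because Jensen's inequality preserves neither side of the two–sided estimate defining $S$. The way I would handle this is to replace, in Step 2, the $u_n$ by their sup- and inf-convolutions $u_n^{\eta}(x)=\sup_y\{u_n(y)-\tfrac1{2\eta}|x-y|^2\}$ (resp.\ $(u_n)_\eta$), which are semiconvex (resp.\ semiconcave) with quantitative one–sided Hessian bounds and which, by the convolution stability results of \cite[Sections 4--5]{Caff-Crand-Kocan-Swi-1996}, remain $L^p$-viscosity sub-/super-solutions of the corresponding extremal equations with right–hand side an $L^p$-small perturbation of $\tilde f_n$ on a slightly smaller half–ball; combining the one–sided curvature bound with the extremal inequality a.e.\ then forces $D^2(\cdot)\in L^p$, hence $W^{2,p}$ membership, while the perturbations are absorbed into the $\delta$-slack of the ellipticity constants, and a diagonal choice of $\eta=\eta(n)\to 0$ preserves the local uniform convergence to $u$. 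Making the two–sided bookkeeping in this last point fully rigorous — in particular, keeping track simultaneously of the sub- and super-solution inequalities through the regularizations — is the genuinely technical heart of the argument.
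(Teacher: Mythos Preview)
Your proposal correctly identifies the crux --- obtaining $W^{2,p}$ membership without convexity --- but the resolution you sketch via sup-/inf-convolutions has a structural gap you yourself flag but do not close. The sup-convolution $u_n^\eta$ is semiconvex and remains a \emph{sub}solution of the extremal equation, which together do yield $D^2u_n^\eta\in L^p$; symmetrically, the inf-convolution $(u_n)_\eta$ is a $W^{2,p}$ \emph{super}solution. But these are two different functions, and neither one lies in the two-sided class $S(\lambda-\delta,\Lambda+\delta,\cdot)$: sup-convolution destroys the supersolution inequality and inf-convolution destroys the subsolution one. The $\delta$-slack in the ellipticity constants cannot repair this, because the failure is not a small error term --- it is the complete loss of one of the two inequalities. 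A Lasry--Lions double regularization $(u_n^\eta)_\eta$ would produce a $C^{1,1}$ function, but there is no mechanism guaranteeing it remains in $S$. So as written, your sequence is either in $W^{2,p}$ or in $S(\lambda-\delta,\Lambda+\delta,f)$, but not both.

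The paper's route avoids this by perturbing the \emph{operator} rather than the solution. Following \cite[Proposition~8.1]{Pimentel-Teixeira2016}, one builds auxiliary $(\lambda-\delta,\Lambda+\delta)$-elliptic operators $F_n$ whose \emph{recession function} $F_n^*(M)=\lim_{\mu\to 0}\mu\,F_n(\mu^{-1}M)$ has $C^{1,1}$-estimates (Assumptions $A\ref{A-recession}$--$A\ref{A-betaf*}$); the $\delta$-slack is spent precisely on this perturbation of $F$. Theorem \ref{theo_recession_W2p} --- the boundary analogue of the $W^{2,p}$ theory under good recession behaviour from \cite{Pimentel-Teixeira2016} --- then gives $u_n\in W^{2,p}(B_1^+)$ directly from the equation $F_n(D^2u_n,x)=f$, while membership in $S(\lambda-\delta,\Lambda+\delta,f)$ is automatic from the ellipticity of $F_n$. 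Convergence $u_n\to u$ follows by stability and comparison. The point is that the recession hypothesis is a genuine substitute for convexity in the Caffarelli $W^{2,p}$ machinery, and it can always be arranged after an arbitrarily small perturbation of the ellipticity constants --- this is the idea your argument is missing.
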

The proof of Proposition \ref{prop_w2papp} follows from the ideas in \cite{Pimentel-Teixeira2016}, where the authors work with the recession operator, defined by $F^*(M, x):= \lim_{\mu \to \infty}F_\mu(M, x):= \lim_{\mu \to \infty}\mu F(\mu^{-1} M, x)$, where $\mu$ is a positive constant. For more details on the recession operator, see \cite{Pimentel-Teixeira2016} and \cite{silvtei}. We emphasize that such a notion is only used in the proof of Proposition \ref{prop_w2papp}, for a specific operator, and nowhere else. We assume that

\begin{Assumption}[$F^*$ has $C^{1,1}$-estimates]\label{A-recession}
We assume $F^*$ exists and that solutions to  
\begin{equation}
    \begin{cases}
        F^*(D^2u, x_0)=0 \quad &\text{in} \quad B_1 \,\,(\mbox{resp. } B_1^+),\\
        u=u_0 \quad & \text{on} \quad \partial B_1 \,\,(\mbox{resp. } \partial B_1^+),
    \end{cases}
\end{equation}
satisfies 
\[
\|u\|_{C^{1,1}(\bar{B}_{1/2})} \leq C\|u_0\|_{L^\infty(\partial B_{1})}\quad (\mbox{resp.}\quad \|u\|_{C^{1,1}(\overline{B^+}_{1/2})} \leq C\|u_0\|_{L^\infty(\partial B_{1}^+)}).
\]   
\end{Assumption}

\begin{Assumption}[Oscillation property]\label{A-betaf*}
We suppose that the oscillation of $F^*$ satisfies
\[
\dashint_{B_r}\beta_{F^*}(x, x_0)^d \leq Cr^{d\alpha},
\]
for $r \ll r_0$ and all $x_0 \in B_1$.
\end{Assumption}

\begin{remark}
    Notice that if $u$ is a solution to 
    \begin{equation*}
    \begin{cases}
        F(D^2u,x) = f(x) & \hspace{.2in} \mbox{in} \hspace{.2in} B_{14\sqrt{d}}^+\\
        u = 0 & \hspace{.2in} \mbox{on} \hspace{.2in} B_{14\sqrt{d}}\cap\{x_d=0\},
    \end{cases}
\end{equation*}
then, for any $\mu > 0$, the function $v := \mu u$ solves
\begin{equation*}
    \begin{cases}
        F_\mu(D^2v,x)=\tilde{f}(x) & \hspace{.2in} \mbox{in} \hspace{.2in} B_{14\sqrt{d}}^+\\
        v=0 & \hspace{.2in}\mbox{on} \hspace{.2in} B_{14\sqrt{d}}\cap\{x_d=0\},
    \end{cases}
\end{equation*}
where $\tilde{f}(x) := \mu f(x) \in L^p(B_{14\sqrt{d}})$.
\end{remark}

Under the conditions above, we can prove the following result:

\begin{theorem}\label{theo_recession_W2p}
Let $u \in C(B_1^+)$ be a viscosity solution to \eqref{1}, with $f \in L^p(B_1^+)$, $p > d$. Suppose that A\ref{A-ellip}, A\ref{A-recession} and A\ref{A-betaf*} hold true. Then $u \in W^{2,p}(B_{1/2}^+)$ and there exists a positive universal constant $C$ such that
\[
\|u\|_{W^{2,p}(B_{1/2}^+)} \leq C(\|u\|_{L^\infty(B_1^+)} + \|f\|_{L^\infty(B_1^+)}).
\]
\end{theorem}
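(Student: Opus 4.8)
The plan is to follow the Caffarelli-type approximation scheme of \cite{Caffarelli1989} and \cite{nikki2009}, but in the half-ball and with the recession operator $F^*$ playing the role that the convex/concave operator plays in the classical argument. The structural point is that, by Assumption A\ref{A-recession}, solutions of the homogeneous recession equation $F^*(D^2h,x_0)=0$ enjoy $C^{1,1}$ interior (and boundary) estimates, and this is exactly the ingredient that the $W^{2,p}$-machinery of \cite{Caffarelli1989,Escauriaza93,nikki2009} needs. So the first step is an \emph{approximation lemma}: given $\delta>0$, if $\|f\|_{L^p(B^+_{r}(x_0))}$ and the oscillation quantity (controlled by A\ref{A-betaf*}) are small at the scale $r$, then the viscosity solution $u$ is $\delta r^2$-close in $L^\infty(B^+_{r/2}(x_0))$ to a function $h$ solving $F^*(D^2h,\cdot)=0$ with $\|h\|_{C^{1,1}}\le C$. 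This is proved by the usual compactness/contradiction argument: rescale to unit scale, use interior and boundary $C^\alpha$-estimates (Krylov--Safonov up to the flat boundary) to extract a locally uniform limit, use stability of viscosity solutions together with $F_\mu\to F^*$ (note the remark that $v=\mu u$ solves the $F_\mu$-equation, so the rescalings that blow up the second derivatives are precisely the ones driving $F_\mu\to F^*$), and invoke A\ref{A-recession} to get the $C^{1,1}$-bound on the limit.

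Once the approximation lemma is available, the second step is the standard \emph{measure-decay / touching-by-paraboloids} estimate. Define, for $t>0$, the set $A_t(u,B^+_1)$ of points that cannot be touched from above and below by paraboloids of opening $t$; using the approximation lemma together with the $C^{1,1}$-bound on $h$ (which lets a paraboloid of universal opening touch $h$), plus the Calderón--Zygmund cube decomposition (Lemma \ref{lem_calzyg}) and the maximal-function bound (Proposition \ref{Maximal function}) to absorb the contribution of $f$, one obtains a geometric decay
\[
|A_{M^{k+1}}(u,B^+_1)\cap Q|\le \sigma\big(|A_{M^k}(u,B^+_1)\cap Q|+|\{M(f^d)\ge (C_0M^k)^d\}\cap Q|\big),
\]
exactly as in Lemmas \ref{lemma 5}--\ref{lemma 8} but with paraboloids instead of $C^{1,\alpha}$-cones. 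Iterating and using Lemma \ref{lem_eqsob} gives $\|A_{M^k}(u,B^+_1)\cap(Q^{d-1}_1\times(0,1))\|$ summable against $M^{pk}$ once $p$ is large enough relative to the decay rate, hence the aperture (second-order) function $\Theta\in L^p(B^+_{1/2})$.

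The third step converts $\Theta\in L^p$ into $D^2u\in L^p$: here, because we are using paraboloids rather than cones, a point $x_0$ with $\Theta(x_0)<\infty$ is one at which $u$ is touched from both sides by paraboloids of opening $\Theta(x_0)$, so $u$ is pointwise second-order differentiable there with $|D^2u(x_0)|\le C\Theta(x_0)$; combining with the a.e.\ twice-differentiability of $L^p$-viscosity solutions (\cite[Theorem 3.6]{Caff-Crand-Kocan-Swi-1996}) and a density/covering argument as in \cite{Caffarelli1989,nikki2009} yields $u\in W^{2,p}(B^+_{1/2})$ with the stated estimate, after a scaling normalization as in Remark \ref{rem_scaling}. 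The boundary $u=0$ on $\{x_d=0\}$ is handled throughout by the half-ball versions of the estimates (reflecting, or using the boundary Krylov--Safonov and the boundary $C^{1,1}$-estimate in A\ref{A-recession}), so the argument runs uniformly up to the flat boundary.

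\textbf{Main obstacle.} I expect the delicate point to be the approximation lemma near the flat boundary: one must ensure that the limiting function inherits the \emph{boundary} $C^{1,1}$-estimate, i.e.\ that the compactness limit $h$ actually solves the recession Dirichlet problem \emph{up to} $\{x_d=0\}$ with zero boundary data, which requires boundary equicontinuity of the $u_n$'s (boundary $C^\alpha$-estimates for viscosity solutions with the flat boundary and zero data) and care that the oscillation control in A\ref{A-betaf*} survives the rescalings. The bookkeeping of how the smallness of $\|f\|_{L^p}$ and of the oscillation at each dyadic scale interacts with the iteration — in particular choosing $M,\sigma,C_0$ universal and $p>d$ (or $p>d-\varepsilon_0$ via Remark \ref{rem_escau}) so that $\sigma M^p<1$ — is routine but must be done consistently with the half-ball geometry, mirroring the case analysis (interior vs.\ near-boundary cubes) already carried out in Lemma \ref{lemma 8}.
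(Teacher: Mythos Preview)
Your proposal is correct and follows essentially the same route as the paper: an approximation lemma via compactness that connects the rescaled operators $F_\mu$ to the recession operator $F^*$ (so that A\ref{A-recession} supplies the $C^{1,1}$ a priori bound on the approximating function $h$), followed by the paraboloid-based measure-decay/Calder\'on--Zygmund iteration of \cite{Caffarelli1989,nikki2009} adapted to the half-ball. The paper likewise isolates the boundary approximation lemma (with smallness of $\mu+\|f\|_{L^p}$) as the key step and then defers the remainder to \cite[Section~2]{nikki2009} with the replacement of Winter's $C^{1,1}$ hypothesis by A\ref{A-recession}; your identification of the boundary compactness as the delicate point matches exactly.
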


The proof of Proposition \ref{theo_recession_W2p} proceeds in several steps, largely following the arguments of \cite[Section 4-6]{Pimentel-Teixeira2016}, combined with \cite[Section 2]{nikki2009}, with suitable modifications. Although we omit the detailed proof here, we provide some of its key ingredients for the convenience of the reader. The main modification to take into account is to replace the assumption $A$ of \cite{nikki2009}, namely, the local (resp. on the boundary) $C^{1,1}$-estimates for equations of the form
\begin{equation*}
    \begin{cases}
        F(D^2u, x_0)=0 \quad &\text{in} \quad B_1 \,\,(\mbox{resp. } B_1^+),\\
        u=u_0 \quad & \text{on} \quad \partial B_1 \,\,(\mbox{resp. } \partial B_1^+),
    \end{cases}
\end{equation*}
for the local (resp. on the boundary) $C^{1, 1}$-estimates for the recession operator, Assumption $A\ref{A-recession}$. We need the following Approximation Lemma.

\begin{lemma}[Approximation Lemma II]\label{lemma_appenapprox}
Let $u\in C(\overline{B}_{1}^+)$ be a normalized viscosity solution to
\begin{equation*}
    \begin{cases}
        F_\mu(D^2u,x)=f(x)&\hspace{.2in}\mbox{in}\hspace{.2in}B_{14\sqrt{d}}^+\\
        u=0&\hspace{.2in}\mbox{on}\hspace{.2in} B_{14\sqrt{d}}\cap\{x_d=0\}.
    \end{cases}
\end{equation*}
Assume that $A\ref{A-ellip}$, $A\ref{A-recession}$ and $A\ref{A-betaf*}$ are in force. Given $\delta>0$, there exists $\varepsilon >0$ such that if 
\[
\mu + \|f\|_{L^p(B_{14\sqrt{d}}^+)} <\varepsilon,
\]
then we can find a function $h\in C^{1,1}_{loc}(\overline{B}_{12\sqrt{d}}^+)$ satisfying
\[
\|u-h\|_{L^{\infty}(B_{12\sqrt{d}}^+)}\leq \delta.
\]
In addition, there exists a positive constant $C=C(d,\lambda,\Lambda,\alpha_0)$ such that
\[
\|h\|_{C^{1,1}(\overline{B}_{12\sqrt{d}}^+)}\leq C.
\]
\end{lemma}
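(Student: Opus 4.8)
The plan is to prove Lemma~\ref{lemma_appenapprox} by a compactness/contradiction argument, exactly parallel to the proof of Proposition~\ref{aproximationlemma}, but replacing the limiting operator by the recession operator $F^*$ and invoking Assumption~$A\ref{A-recession}$ in place of the $(\lambda,\Lambda)$-ellipticity regularity for $F_\infty$. Suppose the conclusion fails: then there exist $\delta_0>0$, positive constants $\mu_n\to 0$, $(\lambda,\Lambda)$-elliptic operators $F_{\mu_n}$ arising from operators $F_n$ satisfying $A\ref{A-ellip}$, $A\ref{A-recession}$, $A\ref{A-betaf*}$, and normalized viscosity solutions $u_n\in C(\overline{B}_{1}^+)$ to
\[
\begin{cases}
F_{\mu_n}(D^2u_n,x)=f_n(x) & \text{in } B_{14\sqrt d}^+,\\
u_n=0 & \text{on } B_{14\sqrt d}\cap\{x_d=0\},
\end{cases}
\]
with $\mu_n+\|f_n\|_{L^p(B_{14\sqrt d}^+)}<1/n$, yet $\|u_n-h\|_{L^\infty(B_{12\sqrt d}^+)}>\delta_0$ for every $h\in C^{1,1}_{loc}(\overline{B}_{12\sqrt d}^+)$.

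Next I would extract a convergent subsequence. Since the $u_n$ are normalized and solve uniformly elliptic equations with $L^p$ right-hand side, $p>d$, the global (up to the flat boundary) Krylov–Safonov / $C^{1,\alpha}$ estimates of the type recorded in Section~3 give uniform interior (and boundary, using $u_n=0$ on the flat part) Hölder bounds, so up to a subsequence $u_n\to u_\infty$ locally uniformly in $\overline{B}_{12\sqrt d}^+$, with $u_\infty=0$ on $B_{12\sqrt d}\cap\{x_d=0\}$. By $A\ref{A-ellip}$ the operators $F_{\mu_n}$ are all $(\lambda,\Lambda)$-elliptic; passing to a further subsequence and using that $F_{\mu_n}(M,x)=\mu_n F_n(\mu_n^{-1}M,x)$ with $\mu_n\to 0$, one identifies the locally uniform limit of $F_{\mu_n}$ with the recession operator $F^*$ (this is the definition of $F^*$ as $\lim_{\mu\to\infty}\mu F(\mu^{-1}M,x)$, applied with $\mu=\mu_n^{-1}\to\infty$, after accounting for the $\beta_{F^*}$-control in $A\ref{A-betaf*}$ which makes the $x$-dependence stable). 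Stability of $L^p$-viscosity solutions under uniform convergence of the operators and of the solutions, together with $\|f_n\|_{L^p}\to 0$, then yields that $u_\infty$ is a viscosity solution of $F^*(D^2u_\infty,x)=0$ in $B_{12\sqrt d}^+$ with zero boundary data on the flat part.

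Now invoke Assumption~$A\ref{A-recession}$: solutions to $F^*(D^2u,x_0)=0$ admit $C^{1,1}$-estimates up to the flat boundary, so combining the frozen-coefficient estimate with the oscillation control $A\ref{A-betaf*}$ in a standard Caffarelli-type perturbation argument gives $u_\infty\in C^{1,1}_{loc}(\overline{B}_{12\sqrt d}^+)$ with $\|u_\infty\|_{C^{1,1}(\overline{B}_{12\sqrt d}^+)}\le C(d,\lambda,\Lambda,\alpha_0)$. Choosing $h\equiv u_\infty$ contradicts $\|u_n-h\|_{L^\infty(B_{12\sqrt d}^+)}>\delta_0$ for large $n$, and the quantitative bound on $\|h\|_{C^{1,1}}$ is exactly the one just obtained. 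I expect the main obstacle to be the identification of the limit operator as $F^*$ together with the $x$-regularity needed to run the $C^{1,1}$ perturbation step at the boundary: one must carefully track that the rescaling $\mu_n\to 0$ does not destroy the oscillation decay, which is precisely why $A\ref{A-betaf*}$ is stated for $\beta_{F^*}$ rather than for $\beta_F$, and one must ensure the boundary version of the perturbation argument of \cite[Section 2]{nikki2009} applies with $F^*$ in place of the frozen-coefficient operator.
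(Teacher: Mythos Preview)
Your proposal is correct and follows essentially the same contradiction/compactness argument as the paper's proof. The paper's version is terser: it keeps the underlying operator $F$ fixed along the sequence (rather than varying $F_n$), cites \cite[Lemma~4.2]{Pimentel-Teixeira2016} for the identification of the limiting equation as $F^*(D^2u_\infty,x)=0$, and passes directly from this to $u_\infty\in C^{1,1}(\overline{B}_{12\sqrt d}^+)$ without spelling out the frozen-coefficient perturbation step you flag as a potential obstacle.
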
 

\begin{proof}
The proof follows the general lines of Proposition \ref{aproximationlemma}. Suppose the statement of the lemma is false, then we can find $\delta_0$ and sequences $(u_n)_{n\in \N}$, $(f_n)_{n\in \N}$ and $(\mu_n)_{n\in \N}$  satisfying
\[
 \mu_n + \|f_n\|_{L^p(B_{14\sqrt{d}}^+)} < \dfrac{1}{n},
\]
\[
F_{\mu_n}(D^2u_n, x) = f_n \quad \mbox{in} \quad B^+_{14\sqrt{d}},
\]
but
\begin{equation}\label{eq_apenapprox}
\|u_n - h\|_{L^\infty(B^+_{12\sqrt{d}})} > \delta_0,
\end{equation}
for all $n \in \mathbb{N}$ and $h \in C^{1, 1}(B^+_{12\sqrt{d}})$. As before, the regularity available for $u_n$ implies that we can find $u_\infty$ for which $u_n \to u_\infty$ locally uniformly in $B^+_{14\sqrt{d}}$. Moreover, as in \cite[Lemma 4.2]{Pimentel-Teixeira2016}, we can see that $u_\infty$ solves
%[pimentel-teixeira, approximation lemma], we can see that $u_\infty$ solves
\[
F^*(D^2u, x) = 0 \quad \mbox{in} \quad B^+_{13\sqrt{d}},
\]
which yields to $u_\infty \in C^{1,1}(B^+_{12\sqrt{d}})$. By taking $h \equiv u_\infty$ we reach a contradiction with \eqref{eq_apenapprox}.   
\end{proof}

The remainder of the proof of Theorem \ref{theo_recession_W2p} now closely follows the arguments in \cite{nikki2009}. More precisely, we continue as the analysis of \cite[Section 2]{nikki2009} with minor modifications, where for instance, in the proof of \cite[Lemma 2.14]{nikki2009} and \cite[Lemma 2.15]{nikki2009}, we use Lemma \ref{lemma_appenapprox}, instead of Lemma \cite[Lemma 2.13]{nikki2009}. 

Once we have Theorem \ref{theo_recession_W2p} available, the proof of Proposition \ref{prop_w2papp} follows as in \cite[Proposition 8.1]{Pimentel-Teixeira2016}, where we build an operator satisfying conditions $A$\ref{A-ellip}, $A\ref{A-recession}$ and $A\ref{A-betaf*}$, and then we use Theorem \ref{theo_recession_W2p}, instead of \cite[Theorem 6.1]{Pimentel-Teixeira2016} to finish the proof.

\subsection{Interior estimates}

\begin{theorem}[Fractional Sobolev Regularity]\label{prop interior estimate I}
Let $u \in C(B_1)$ be a bounded viscosity solution of
\begin{equation*}
\begin{cases}
    F(D^2u,x)=f(x) \quad &\text{in} \quad B_1\\
    u=0 \quad &\text{on} \quad \partial B_1.
\end{cases}
\end{equation*}
Assume that $A\ref{A-ellip}$-$A\ref{A-oscF}$ are in force. Then, we can find $g \in L^p(B^+_{1/2})$ such that the function $\tilde{u} := u\chi_{B_{1/2}}$ is a weak solution to
%Let $\varepsilon\in (0,\alpha_0)$. Then we can find $g \in L^p(B^+_{1/2})$ such that the function $\tilde{u} := u\chi_{B_{1/2}}$ is a weak solution to
\begin{equation}
\begin{cases}\label{eq_fraclap}
    (-\Delta)^{\frac{\sigma}{2}}\tilde{u}=g \quad &\text{in} \quad B_{1/2},\\
    \tilde{u}=0 \quad &\text{in} \quad \R^d\setminus B_{1/2},
\end{cases}
\end{equation}
In particular, we have that $u \in W^{\sigma, p}(B_{1/2})$ with the estimate
\[
\|u\|_{W^{\sigma,p}(B_{1/2})} \leq C(\|u\|_{L^\infty(B_1)} +\|f\|_{L^p(B_1)}),
\]
where $\sigma = 1+\varepsilon$, for $\varepsilon\in (0,\alpha_0)$.
\end{theorem}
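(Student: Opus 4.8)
\textbf{Proof proposal for Theorem \ref{prop interior estimate I}.}

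The plan is to follow the interior analogue of the strategy developed in Section 3 for the flat-boundary case, which ultimately rests on the $L^p$-integrability of the $C^{1,\alpha}$-aperture function $\theta_{1+\alpha}(u,B_{1/2})$. First I would reduce to a normalized setting via Remark \ref{rem_scaling}, so that $\|u\|_{L^\infty(B_1)}\le 1$ and $\|f\|_{L^p(B_1)}$ is as small as required by the approximation lemmas. The first main step is to establish the interior version of Proposition \ref{eq th1}, namely $\theta=\theta_{1+\alpha}(u,B_1)\in L^p(B_{1/2})$ with a universal bound $\|\theta\|_{L^p(B_{1/2})}\le C$. This is exactly the content of \cite[Proposition 4 and the subsequent sections]{Pimentel-Santos-Teixeira2022}, and can be obtained here by repeating the argument of Lemmas \ref{lemma 5}--\ref{lemma 8} and Proposition \ref{eq th1} verbatim but with interior cubes (centred dyadic cubes inside $B_1$) in place of the half-cubes $Q_1^{d-1}\times(0,1)$; the interior decay estimate for $|A_t(u,\Omega)|$ comes from Lemma \ref{Lemma 1} and Lemma \ref{lemma2}, combined with the interior Approximation Lemma (the interior counterpart of Proposition \ref{aproximationlemma}, using the $C^{1,\alpha_0}_{\mathrm{loc}}$-estimates for $F_\infty(D^2h)=0$), iterated through a Calderón--Zygmund argument exactly as in Lemma \ref{lemma 8}, and finally summed as in Proposition \ref{eq th1} after fixing $\varepsilon_0 M^p=1/2$.

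The second step converts the pointwise cone bound into an estimate for the fractional incremental quotients and then for the fractional Laplacian, reproducing the computations in the proof of Theorem \ref{theo1}. Setting $\tilde u:=u\chi_{B_{1/2}}$, for $x_0\in B_{1/2}$ a convex $C^{1,\alpha}$-cone of opening $M$ touching $\tilde u$ from above at $x_0$ inside $B_{1/10}(x_0)$ forces
\[
\Delta^{1+\alpha}_h\tilde u(x_0)\le M,\qquad 0<h<\tfrac1{10},
\]
and symmetrically from below, so $|\Delta^{1+\alpha}_h\tilde u(x_0)|\le \theta(u,B_{1/2})(x_0)$; hence $\sup_{0<h<1/10}\|\Delta^{1+\alpha}_h\tilde u\|_{L^p(B_{1/2})}\le \|\theta\|_{L^p(B_{1/2})}\le C$. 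Then, writing $\sigma=1+\varepsilon<1+\alpha$ and $\delta=1+\alpha-\sigma>0$, split the singular integral $I_{\sigma/2}(\tilde u)(x_0)$ over $B_{1/10}$ and $\R^d\setminus B_{1/10}$: on the inner ball the numerator is controlled by $\theta(u,B_{1/2})(x_0)|y|^{1+\alpha}$, giving a convergent integral $\int_{B_{1/10}}|y|^{-d+\delta}\,dy$, while the outer part is bounded by $C\|\tilde u\|_{L^\infty}\le C\|u\|_{L^\infty(B_1)}$. This yields $(-\Delta)^{\sigma/2}\tilde u\in L^p(B_{1/2})$ with $\|(-\Delta)^{\sigma/2}\tilde u\|_{L^p(B_{1/2})}\le C(\|u\|_{L^\infty(B_1)}+\|f\|_{L^p(B_1)})$, so defining $g:=(-\Delta)^{\sigma/2}\tilde u$ on $B_{1/2}$ gives a function $g\in L^p(B_{1/2})$ for which $\tilde u$ solves \eqref{eq_fraclap} pointwise a.e.; the integration-by-parts identity at the end of the proof of Theorem \ref{theo1} (using that $u\in W^{1,p}(B_{1/2})\subset W^{\sigma/2,p}(B_{1/2})$ by \cite[Proposition 2.2]{Nezza-Palatucci-Valdinoci-2012}) upgrades this to a weak solution.

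The final step is to invoke the regularity theory for the fractional Laplacian: since $\tilde u$ is a weak solution to \eqref{eq_fraclap} with $g\in L^p$, \cite[Corollary 4.7]{AFLY2023} gives $\tilde u\in W^{\gamma,p}(B_{1/2})$ for every $\gamma<\sigma$, and in particular one records the quantitative bound $\|u\|_{W^{\sigma,p}(B_{1/2})}\le C(\|u\|_{L^\infty(B_1)}+\|f\|_{L^p(B_1)})$ claimed in the statement (with the usual understanding that $W^{\sigma,p}$ here is read in the scale $\gamma<\sigma$ as in Corollary \ref{cor_fracreg}, or with a mildly smaller radius absorbed into $C$). The main obstacle, as in \cite{Pimentel-Santos-Teixeira2022}, is the first step: proving the power decay $|A_t(u,B_{1/2})|\lesssim t^{-\mu}$ and then its summable improvement to $\theta\in L^p$, since this requires the full Calderón--Zygmund iteration with the cone-based sets $G_M$, careful tracking of how rescalings act on $G_M$ via Lemma \ref{GM properties}, and the approximation by $C^{1,\alpha_0}$-solutions of the homogeneous equation — everything downstream is a routine, if somewhat lengthy, adaptation of the boundary arguments already carried out in Section 3.
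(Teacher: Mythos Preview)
Your proposal is correct and follows essentially the same route as the paper's proof: both reduce the interior result to the machinery of \cite{Pimentel-Santos-Teixeira2022}, with the single substantive modification being the interior approximation lemma (the counterpart of Proposition \ref{aproximationlemma}) adapted to variable-coefficient operators under Assumption $A\ref{A-oscF}$, after which the Calder\'on--Zygmund iteration, the $L^p$-bound on $\theta_{1+\alpha}$, and the passage to the fractional Laplacian proceed exactly as you outline. The paper's proof is in fact more terse than yours---it simply states and proves the variable-coefficient approximation lemma by the same compactness/contradiction argument and then defers everything else to \cite[Sections 4--5]{Pimentel-Santos-Teixeira2022}, whereas you spell out the downstream steps explicitly by analogy with Section 3.
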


\begin{proof}
The proof of this theorem follows as in the arguments presented in \cite{Pimentel-Santos-Teixeira2022}. The key modification occurs in \cite[Lemma 5]{Pimentel-Santos-Teixeira2022}, where the result must be adjusted to accommodate operators with variable coefficients, under the additional assumption $A\ref{A-oscF}$. Specifically, if $u \in C(B_{8\sqrt{d}})$ is a normalized viscosity solution to
\[
F(D^2u, x) = f(x) \quad \text{in} \quad B_{8\sqrt{d}},
\]
then, given any $\delta > 0$, there exists a corresponding $\varepsilon > 0$ such that, provided $\|f\|_{L^p(B_{8\sqrt{d}})} \leq \varepsilon$, we can find a function $h \in C^{1,\alpha_0}_{\text{loc}}(B_{7\sqrt{d}})$ that satisfies
\begin{equation}\label{eq h delta-closy to u}
\|u - h\|_{L^{\infty}_{\text{loc}}(\overline{B}_{7\sqrt{d}})} \leq \delta.    
\end{equation}
We follow similar reasoning to prove this by contradiction as in the proof of \cite[Lemma 5]{Pimentel-Santos-Teixeira2022}. Assume there exists some $\delta_0 > 0$, along with a sequence of $(\lambda,\Lambda)$-elliptic operators $(F_n)$, and sequences of functions $(u_n)$ and $(f_n)$, such that
\[
F_n(D^2u_n) = f_n \quad \text{in} \quad B_1,
\]
and $\|f_n\|_{L^p(B_{8\sqrt{d}})} \leq 1/n$. However, for any function $h \in C^{1,\alpha_0}_{\text{loc}}(B_{7\sqrt{d}})$, we have
\[
\|u_n - h\|_{L^{\infty}(B_{6\sqrt{d}})} > \delta_0.
\]
From the regularity theory available for the family $(u_n)_{n \in N}$, combined with stability arguments, we deduce the existence of a function $u_{\infty} \in C^{\beta/2}_{\text{loc}}(B_{7\sqrt{d}})$ that solves the limit problem, which has $C^{1, \alpha_0}$-estimates. Setting $h \equiv u_{\infty}$ leads to a contradiction. Thus, we conclude that inequality \eqref{eq h delta-closy to u} holds true.

The remaining of the proof now closely follows sections 4 and 5 of \cite{Pimentel-Santos-Teixeira2022}, using \eqref{eq h delta-closy to u}, and accommodating the assumption $A\ref{A-oscF}$ in \cite[Lemma 6]{Pimentel-Santos-Teixeira2022}, \cite[Lemma 7]{Pimentel-Santos-Teixeira2022} and \cite[Lemma 8]{Pimentel-Santos-Teixeira2022} for instance.     
\end{proof}

The next proposition gives us an interior estimate for gradient-dependent operators.

\begin{proposition}\label{prop geral interior case I}
Let $u\in C(B^+_1)$ be a bounded viscosity solution to
\begin{equation}\label{eq prop interior geral case I}
    \begin{cases}
        F(D^2u,Du,u,x)=f(x) \quad &\text{in} \quad B_1,\\
        u=0 \quad & \text{on} \quad \partial B_1.
    \end{cases}
\end{equation}
Assume thatA\ref{A-sourceterm}, A\ref{A-oscF} and A\ref{A-strcond} hold true. Then there exists  $\varepsilon_0 = \varepsilon_0(d,\lambda,\Lambda,b)$, such that if $d-\varepsilon_0<p<+\infty$, then  $u\in W^{1+\varepsilon,p}(B^+_{1/2})$ and
\begin{equation}\label{eq 2.16}
\|u\|_{W^{1+\varepsilon,p}(B^+_{1/2})}\leq C\left(\|u\|_{L^{\infty}(B^+_1)}+\|f\|_{L^p(B^+_1)}\right),
\end{equation}
where $C=C(d,\lambda,\Lambda,b,c,p,r_0)$.
\end{proposition}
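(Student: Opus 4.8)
The plan is to reduce Proposition \ref{prop geral interior case I} to the already-established interior result for equations without lower-order terms, namely Theorem \ref{prop interior estimate I}, exactly as the boundary case (Proposition \ref{prop geral case I}) was reduced to Theorem \ref{theo1}. First I would invoke \cite[Theorem 3.6]{Caff-Crand-Kocan-Swi-1996} to conclude that the viscosity solution $u$ to \eqref{eq prop interior geral case I} is twice pointwise differentiable almost everywhere and satisfies the equation pointwise a.e.\ in $B_1$. Then I would define the auxiliary source term $\tilde f(x) := F(D^2u(x), 0, 0, x)$ and use the structure condition A\ref{A-strcond} to bound $|\tilde f(x)| \leq b|Du(x)| + c|u(x)| + |f(x)|$ almost everywhere. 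The crucial input here is the interior $W^{1,p}$-estimate for viscosity solutions of such equations (available from \cite{Escauriaza93,Swiech97} and the analysis in \cite{nikki2009}, valid for $p > d - \varepsilon_0$ with $\varepsilon_0 = \varepsilon_0(d,\lambda,\Lambda,b)$ the Escauriaza-type exponent), which gives $Du \in L^p(B_1)$ and hence $\tilde f \in L^p(B_1)$, with the quantitative estimate
\[
\|\tilde f\|_{L^p(B_1)} \leq C\left(\|u\|_{L^\infty(B_1)} + \|f\|_{L^p(B_1)}\right).
\]

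Next I would use \cite[Corollary 1.6]{Caff-Crand-Kocan-Swi-1996} to upgrade the pointwise-a.e.\ identity $\tilde F(D^2u, x) = \tilde f(x)$, where $\tilde F(M,x) := F(M,0,0,x)$, to an $L^p$-viscosity equation. One checks that $\tilde F$ inherits uniform ellipticity from A\ref{A-strcond} (setting $b = c = 0$ there recovers A\ref{A-ellip}) and that the oscillation function satisfies $\beta_{\tilde F}(x,x_0) = \beta_F(x,x_0)$ since freezing the lower-order slots does not affect the $x$-dependence, so A\ref{A-oscF} transfers verbatim. At this stage one would still need to address that $\tilde F$ need not be continuous in $x$ (A\ref{A-cont} may fail), which is handled by the mollification/approximation procedure used in Proposition \ref{prop geral case I} and in \cite[Proof of Theorem 4.3]{nikki2009}: set $\tilde F_j(M,x) := \int \phi_j(x-y)\tilde F(M,y)\,dy$, solve the corresponding Dirichlet problems to obtain solutions $u_j$ to which Theorem \ref{prop interior estimate I} applies, pass to the limit $u_j \to u$ (with $u_j \rightharpoonup u$ weakly in $W^{\gamma,p}(B^+_{1/2})$ for all $\gamma < 1+\varepsilon$), and use lower semicontinuity of the norm under weak convergence.

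Applying Theorem \ref{prop interior estimate I} to each $\tilde F_j$-equation yields the fractional estimate $\|u_j\|_{W^{1+\varepsilon,p}(B^+_{1/2})} \leq C(\|u_j\|_{L^\infty} + \|\tilde f_j\|_{L^p})$, and combining with the $W^{1,p}$-bound on $\tilde f$ and passing to the limit gives
\[
\|u\|_{W^{1+\varepsilon,p}(B^+_{1/2})} \leq C\left(\|u\|_{L^\infty(B_1^+)} + \|f\|_{L^p(B_1^+)}\right),
\]
with $C = C(d,\lambda,\Lambda,b,c,p,r_0)$, which is precisely \eqref{eq 2.16}. I expect the main obstacle to be the bookkeeping in the approximation step — ensuring that the mollified operators $\tilde F_j$ simultaneously retain uniform ellipticity with constants independent of $j$, satisfy A\ref{A-oscF} with the same $\beta_0$, and that the corresponding solutions $u_j$ enjoy uniform $W^{1,p}$ and $W^{1+\varepsilon,p}$ bounds so that the weak limit can be extracted; the structural estimates themselves are routine given the cited results, but care is needed because $\tilde F$ is only measurable in $x$ after freezing the gradient and zeroth-order arguments, so the reduction to an $L^p$-viscosity solution via \cite{Caff-Crand-Kocan-Swi-1996} must be invoked before, not after, mollification. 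This argument is essentially identical to that of Proposition \ref{prop geral case I}, with Theorem \ref{theo1} replaced by Theorem \ref{prop interior estimate I} and half-balls replaced by balls, so it can be stated briefly.
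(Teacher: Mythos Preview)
Your proposal is correct and follows exactly the approach the paper intends: its own proof simply states that the argument is ``very similar to the proof of Proposition \ref{prop geral case I}'' and omits the details, and you have faithfully reproduced that reduction (freeze lower-order terms via \cite[Theorem 3.6 and Corollary 1.6]{Caff-Crand-Kocan-Swi-1996}, control $\tilde f$ by the interior $W^{1,p}$-estimate, then mollify and apply Theorem \ref{prop interior estimate I}). If anything, you have supplied more detail than the paper does.
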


\begin{proof}
The proof is very similar to the proof of Proposition \ref{prop geral case I} and we omit it here.
\end{proof}

\bibliographystyle{amsplain}
\bibliography{Alcantara_Santos}

\bigskip

\noindent\textsc{Claudemir Alcantara}\\
Department of Mathematics\\
Pontifical Catholic University of Rio de Janeiro (PUC-Rio), \\
451-900, Rio de Janeiro, Brazil\\
\noindent\texttt{alcantara@mat.puc-rio.br}

\vspace{.15in}

\noindent\textsc{Makson S. Santos}\\
Departamento de Matem\'atica, Faculdade de Ci\^encias\\
Universidade de Lisboa\\
1749-016 Lisboa, Portugal\\
\noindent\texttt{msasantos@fc.ul.pt}

\end{document}